\newtheorem{thm}{Theorem}
\newtheorem*{thm*}{Theorem}
\newtheorem{defn}{Definition}
\newtheorem*{defn*}{Definition}
\newtheorem*{prop}{Proposition}
\newtheorem{claimm}{Claim}
\newtheorem{lemma}{Lemma}
\newtheorem*{cor*}{Corollary}
\newcommand{\drawat}[3]{\makebox[0pt][l]{\raisebox{#2}{\hspace*{#1}#3}}}
\newcommand{\N}{\mathbb{N}}
\newcommand{\R}{\mathbb{R}}
\newcommand{\Q}{\mathbb{Q}}
\newcommand{\iso}{\mathrm{iso}}
\newcommand{\funct}[2]{#1 \longrightarrow #2}
\newcommand{\restrict}[2]{#1\mspace{-2mu}\mathbin{\upharpoonright}\mspace{-1mu} #2}
\newcommand{\Ur}{\textbf{U}}
\newcommand{\s}{\textbf{S}}
\newcommand{\dom}{\mbox{$\mathrm{dom}$}}
\newcommand{\ran}{\mbox{$\mathrm{ran}$}}
\newcommand{\m}[1]{\textbf{#1}}
\newcommand{\mc}[1]{\widetilde{\textbf{#1}}}
\author{L. Nguyen Van Th\'{e} and N. W. Sauer}
\address{}
\email{nguyen@math.ucalgary.ca}
\email{nsauer@math.ucalgary.ca}
\title{The Urysohn sphere is oscillation stable.}
\subjclass[2000]{Primary: 03E02. Secondary: 22F05, 05C55, 05D10, 22A05, 51F99}
\keywords{Topological groups actions, Oscillation stability, Ramsey theory, Metric geometry, Urysohn metric space}
\date{April, 2007}
\begin{document}

\begin{abstract}

We solve the oscillation stability problem for the Urysohn sphere, an analog of the distortion problem for $\ell _2$ in the context of the Urysohn space $\Ur$. This is achieved by solving a purely combinatorial problem involving a family of countable ultrahomogeneous metric spaces with finitely many distances. 

\end{abstract}

\maketitle

\section{Introduction.}

The purpose of this article is to provide a combinatorial solution for an analog of the distortion problem for $\ell _2$. This latter problem can be formulated as follows: let $\mathbb{S} ^{\infty}$ denote the unit sphere of the Hilbert space $\ell _2$. Is it true that if $\varepsilon > 0$ and $f : \funct{\mathbb{S}^{\infty}}{\R}$ is uniformly continuous, then there is a closed infinite-dimensional subspace $V$ of $\ell
_2$ such that \[ \sup \{ \left| f(x) - f(y) \right| : x, y \in V \cap \mathbb{S} ^{\infty} \} < \varepsilon? \]

Equivalently, for a metric space $\m{X} = (X, d^{\m{X}})$, a subset $Y \subset X$ and $\varepsilon > 0$, let 
\[(Y) _{\varepsilon} = \{ x \in X : \exists y \in Y \ \ d ^{\m{X}} (x,y) \leqslant \varepsilon \}.\]

Then the distortion problem for $\ell _2$ asks: given a finite partition $\gamma$ of $\mathbb{S} ^{\infty}$, is there always $\Gamma \in \gamma$ such that $(\Gamma)_{\varepsilon}$ includes $V \cap \mathbb{S} ^{\infty}$ for some closed infinite-dimensional subspace $V$ of $\ell_2$? That problem appeared in the early seventies when Milman's work led to the following property, which is at the heart of Dvoretzky's theorem: 

\begin{thm*}[Milman \cite{Mil}]
\label{thm:Milman'} Let $\gamma$ be a finite partition of\/ $\mathbb{S} ^{\infty}$. Then for every
$\varepsilon > 0$ and every  $N \in \N$, there is $\Gamma \in \gamma$ and an $N$-dimensional subspace $V$ of $\ell _2$ such that $V \cap \mathbb{S}^{\infty} \subset (\Gamma)_{\varepsilon}$.
\end{thm*}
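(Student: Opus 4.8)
The plan is to reduce this infinite-dimensional partition statement to a finite-dimensional one, and then to invoke the concentration of measure phenomenon on high-dimensional Euclidean spheres. Fix a partition $\gamma = \{\Gamma _1, \dots, \Gamma _k\}$ of $\mathbb{S}^{\infty}$, a real $\varepsilon > 0$, and $N \in \N$. I would identify $\R ^n$ with the span of the first $n$ coordinate vectors of $\ell _2$, so that $S^{n-1} := \R ^n \cap \mathbb{S}^{\infty}$ is the Euclidean unit sphere of $\R ^n$; it is partitioned by the sets $\Gamma _i \cap S^{n-1}$, and every $N$-dimensional subspace $V \subseteq \R ^n$ satisfies $V \cap \mathbb{S}^{\infty} = V \cap S^{n-1}$. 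It therefore suffices to show that for all sufficiently large $n$ (a bound depending only on $k, \varepsilon, N$) there are $i \leqslant k$ and an $N$-dimensional subspace $V \subseteq \R ^n$ with $V \cap S^{n-1} \subseteq (\Gamma _i) _{\varepsilon}$.

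I would then fix such an $n$, let $\sigma _n$ be the rotation-invariant probability measure on $S^{n-1}$, and, by finite subadditivity of outer measure, choose $i \leqslant k$ with $\sigma _n ^{*}(\Gamma _i \cap S^{n-1}) \geqslant 1/k$; put $\Gamma := \Gamma _i \cap S^{n-1}$. The essential analytic input is L\'evy's spherical isoperimetric inequality: for fixed $k$ and fixed radius $\rho > 0$, every $A \subseteq S^{n-1}$ with $\sigma _n ^{*}(A) \geqslant 1/k$ satisfies $\sigma _n \big( (A) _{\rho} \cap S^{n-1} \big) \geqslant 1 - \beta(k, \rho, n)$ with $\beta(k,\rho,n) \to 0$ as $n \to \infty$ (one applies the standard isoperimetric inequality to the closed set $\overline{A}$, whose measure is still $\geqslant 1/k$, and uses $(\overline{A}) _{\rho /2} \subseteq (A) _{\rho}$). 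In particular $\sigma _n\big( (\Gamma) _{\varepsilon /2} \cap S^{n-1} \big)$ can be made as close to $1$ as we please by taking $n$ large; from now on all neighborhoods are understood inside $S^{n-1}$.

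The remaining step promotes ``$(\Gamma) _{\varepsilon /2}$ is almost all of $S^{n-1}$'' to ``$(\Gamma) _{\varepsilon}$ contains a full $N$-dimensional subsphere'', via a random rotation plus a union bound over a net. I would fix a reference copy $\R ^N \subseteq \R ^n$ and a finite $(\varepsilon /2)$-net $F$ of its unit sphere; crucially $|F|$ depends only on $N$ and $\varepsilon$, not on $n$. Pick $U$ in the orthogonal group $O(n)$ according to Haar measure. For each fixed $p \in F$ the vector $Up$ is $\sigma _n$-distributed on $S^{n-1}$, so $\mathbb{P}\big[ Up \notin (\Gamma) _{\varepsilon /2} \big] \leqslant \beta(k, \varepsilon /2, n)$, and a union bound gives $\mathbb{P}\big[ \exists p \in F : Up \notin (\Gamma) _{\varepsilon /2} \big] \leqslant |F| \cdot \beta(k, \varepsilon /2, n) < 1$ once $n$ is large. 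Hence some $U \in O(n)$ has $U(F) \subseteq (\Gamma) _{\varepsilon /2}$. Set $V := U(\R ^N)$: given $y \in V \cap S^{n-1}$, write $y = Ux$ with $x$ in the unit sphere of $\R ^N$, choose $p \in F$ with $\| x - p \| \leqslant \varepsilon /2$, and note $\| y - Up \| = \| x - p \| \leqslant \varepsilon /2$ while $Up \in (\Gamma) _{\varepsilon /2}$; thus $y \in (\Gamma) _{\varepsilon} \subseteq (\Gamma _i) _{\varepsilon}$. Since $V \subseteq \R ^n$ forces $V \cap \mathbb{S}^{\infty} = V \cap S^{n-1}$, this $V$ and this $\Gamma _i$ complete the argument.

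The heart of the proof — and its only genuinely hard ingredient — is L\'evy's isoperimetric inequality on $S^{n-1}$; granting that, everything else (the pigeonhole choice of $\Gamma _i$, a net whose size does not grow with $n$, the union bound, and the passage back to $\ell _2$) is elementary bookkeeping. The single point needing a little care is that the classes $\Gamma _i$ need not be Borel, which is why outer measure and the closures $\overline{A}$ appear above; equivalently one can run the whole argument with inner measures of neighborhoods, with no change in substance.
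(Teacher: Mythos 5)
The paper does not prove this statement; it is quoted as a known result from Milman's 1971 paper and used only as motivation, so there is no in-paper argument to compare against. Your proof is correct and follows exactly the concentration-of-measure route that Milman originally used and that has become the textbook argument for this result: pigeonhole on outer measures to find a class $\Gamma_i$ of spherical measure at least $1/k$, apply L\'evy's isoperimetric inequality to make $(\Gamma_i)_{\varepsilon/2}$ have measure close to $1$ on $S^{n-1}$ for $n$ large, and then use a random rotation together with a union bound over a finite net of $S^{N-1}$ (whose cardinality is independent of $n$) to plant an $N$-dimensional subspace whose sphere lies inside $(\Gamma_i)_\varepsilon$. The small technical hurdles you flag — non-measurable pieces handled via outer measure and closures, and the need for the general (measure $\geq 1/k$, not $\geq 1/2$) form of the isoperimetric inequality — are real and you resolve them correctly; the argument is sound as written.
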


In that context, the distortion problem for $\ell _2$ really asks whether this result has an infinite dimensional analog. It is only a long time after Milman's theorem was established that the distortion problem for $\ell _2$ was solved by Odell and Schlumprecht in \cite{OS}:

\begin{thm*}[Odell-Schlumprecht \cite{OS}]

\label{thm:Odell-Schlumprecht}

There is a finite partition $\gamma$ of\/ $\mathbb{S} ^{\infty}$ and  $\varepsilon > 0$ such that no
$(\Gamma)_{\varepsilon}$ for $\Gamma \in \gamma$ includes $V \cap \mathbb{S} ^{\infty}$ for any closed infinite-dimensional subspace $V$ of $\ell_2$.

\end{thm*}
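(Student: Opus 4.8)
The plan is to derive the statement from the stronger fact that $\ell_2$ is \emph{arbitrarily distortable}: for every $\lambda>1$ there is a norm $|||\cdot|||$ on $\ell_2$, equivalent to $\|\cdot\|_2$, such that for every closed infinite-dimensional subspace $V$ of $\ell_2$,
\[
\sup\{\,|||x||| : x\in V,\ \|x\|_2=1\,\}\ \geqslant\ \lambda\cdot\inf\{\,|||y||| : y\in V,\ \|y\|_2=1\,\}.
\]
Granting this, fix $\lambda$ large, let $K$ be an equivalence constant for $|||\cdot|||$ and $\|\cdot\|_2$, and choose $\varepsilon>0$ small enough (depending only on $K$ and $\lambda$) that any two points of $\mathbb{S}^\infty$ at $\|\cdot\|_2$-distance at most $\varepsilon$ have $|||\cdot|||$-values within a factor $\lambda^{1/3}$ of one another; this is possible because on $\mathbb{S}^\infty$ the function $|||\cdot|||$ is bounded below and is $K$-Lipschitz for $\|\cdot\|_2$. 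Now partition $\mathbb{S}^\infty$ into finitely many classes according to which of finitely many multiplicative intervals of ratio $\lambda^{1/3}$ contains $|||x|||$. If some expansion $(\Gamma)_\varepsilon$ contained $V\cap\mathbb{S}^\infty$ for an infinite-dimensional subspace $V$, then every $x\in V\cap\mathbb{S}^\infty$ would lie within $\varepsilon$ of a point of $\Gamma$, and hence $|||\cdot|||$ would oscillate on $V\cap\mathbb{S}^\infty$ by a factor at most $\lambda^{2/3}<\lambda$, contradicting arbitrary distortability. So the whole problem reduces to constructing the distorted norms.

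For that I would follow the two-step strategy of Odell and Schlumprecht. The first step is Schlumprecht's theorem that there exists a reflexive Banach space $S$ with a $1$-unconditional basis $(e_n)_{n\in\N}$ which is itself arbitrarily distortable. The space is obtained by solving the implicit norm equation
\[
\|x\| \;=\; \max\Bigl\{\, \|x\|_{c_0},\ \sup_{n\geqslant 2}\ \frac{1}{\log_2(n+1)}\ \sup\ \sum_{i=1}^{n}\|E_i x\| \,\Bigr\},
\]
the innermost supremum ranging over all families $E_1<E_2<\dots<E_n$ of successive intervals of positive integers. One verifies that this equation has a solution, that the resulting space is reflexive, and --- this is the core of Schlumprecht's argument --- that comparing the $S$-norms of ``rapidly increasing sequences'' of block vectors of widely different lengths produces, for every $\lambda$, an equivalent norm on $S$ whose distortion is at least $\lambda$ on every block subspace, and therefore, by a standard gliding-hump reduction, on every infinite-dimensional subspace.

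The second and much harder step is to transfer distortability from $S$ to $\ell_2$. One cannot do this by embedding, since $\ell_2$ does not contain $S$; instead one transplants the Schlumprecht norm onto $\ell_2$, replacing the $\ell_1$-type inner sums $\sum_i\|E_i x\|$ in the norm equation by the $\ell_2$-type sums $\bigl(\sum_i\|E_i x\|^2\bigr)^{1/2}$ --- a ``$2$-convexification'' of the construction --- and then proving two things: (a) the resulting norm is equivalent to $\|\cdot\|_2$, which works because the weights $1/\log_2(n+1)$ decay fast enough that the square-function sums they generate are absorbed by the Hilbert identity $\|\sum_i E_i x\|_2^2=\sum_i\|E_i x\|_2^2$; and (b) the ``upper'' norm so defined and its ``lower'' companion still differ by a factor $\lambda$ on every block subspace. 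Point (b) is the real obstacle: James's theorem says that $c_0$ and $\ell_1$ are \emph{not} distortable, and correspondingly every naive asymmetric renorming of $\ell_2$ stabilizes on some subspace, so one must show that the convexified Schlumprecht combinatorics nonetheless produces, in every block subspace, block vectors on which the new norm genuinely oscillates, and --- via a Ramsey-type stabilization argument in the spirit of Milman's theorem quoted above, and using that every infinite-dimensional subspace of $\ell_2$ is again isometrically a Hilbert space, so that no hidden structure is available to damp the oscillation --- that this oscillation survives the passage to any further infinite-dimensional subspace. Assembling (a), (b), and the reduction of the first paragraph completes the proof.
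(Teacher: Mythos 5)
This theorem is not proved in the paper: it is quoted from Odell and Schlumprecht's original article and used as a black box, so there is no in-text argument to compare yours against. Judged on its own, the reduction in your first paragraph is sound in outline; it is the standard equivalence between the existence of a $\lambda$-distorting equivalent norm and the failure of approximate indivisibility of $\mathbb{S}^\infty$. One small arithmetic slip: with a multiplicative grid of ratio $\lambda^{1/3}$ and an $\varepsilon$-fudge of the same factor, the total oscillation of $|||\cdot|||$ on $V\cap\mathbb{S}^\infty$ can be as large as $\lambda^{1/3}\cdot\lambda^{1/3}\cdot\lambda^{1/3}=\lambda$, not $\lambda^{2/3}$, so you fail to contradict distortability; choose a ratio such as $\lambda^{1/4}$ and the argument closes.

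The second half of the sketch is where the real trouble lies. Describing the Odell--Schlumprecht transfer from $S$ to $\ell_2$ as a ``$2$-convexification'' in which the $\ell_1$-type inner sums $\sum_i\|E_i x\|$ are replaced by $\bigl(\sum_i\|E_i x\|^2\bigr)^{1/2}$ and ``absorbed by the Hilbert identity'' is self-defeating: if those square-function terms really are absorbed by $\|\sum_i E_i x\|_2^2=\sum_i\|E_i x\|_2^2$, the resulting norm is just (equivalent to) $\|\cdot\|_2$, and you have produced no distortion at all --- this is precisely the tension you flag as ``point (b) is the real obstacle,'' but the sketch does not dissolve it. The actual mechanism in Odell--Schlumprecht is different and more indirect: one uses the $\ell_1$-spreading-model structure inside $S$ to manufacture, for each $k$, an \emph{asymptotic} subset $A_k$ of the sphere of $\ell_2$ (one meeting the sphere of every infinite-dimensional subspace) such that the $A_k$ are uniformly separated from one another; distorted equivalent norms and the failure of oscillation stability are then read off directly from the existence of such a separated family. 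That construction of the sets $A_k$, not a formal convexification of Schlumprecht's implicit norm equation, is the content of the theorem, and it is missing from your proposal. Since the present paper simply cites this result, what would be appropriate here is exactly what the authors do: state the theorem with a reference rather than attempt to reprove it.
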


This result is traditionally stated in terms of the Banach space theoretic concept of oscillation stability, but can also be stated thanks to a new concept of oscillation stability for topological groups introduced by Kechris, Pestov and Todorcevic in \cite{KPT} (cf \cite{Pe1} for a detailed exposition). In this latter formalism, the theorem of Odell and Schlumprecht is equivalent to the fact that the standard action of $\iso (\mathbb{S}^{\infty})$ on $\mathbb{S}^{\infty}$ is not oscillation stable. On the other hand, in the context of isometry groups of complete separable ultrahomogeneous metric spaces, oscillation stability for topological groups coincides with the Ramsey-theoretic concept of \emph{approximate indivisibility}. Recall that a metric space is called \emph{ultrahomogeneous} when every isometry between finite
metric subspaces of $\m{X}$ can be extended to an isometry of $\m{X}$ onto itself. For
$\varepsilon \geqslant 0$, call a metric space $\m{X}$ \emph{$\varepsilon$-indivisible} when for
every finite partition $\gamma$ of $\m{X}$, there is $\Gamma \in \gamma$ and
$\mc{X} \subset \m{X}$ isometric to $\m{X}$ such that
\[\mc{X} \subset (\Gamma)_{\varepsilon}.\]

Then $\m{X}$ is \emph{approximately indivisible} when $\m{X}$ is $\varepsilon$-indivisible for
every $\varepsilon > 0$, and $\m{X}$ is \emph{indivisible} when $\m{X}$ is $0$-indivisible. 

Using this terminology, the theorem of Odell and Schlumprecht states that the sphere $\mathbb{S}^{\infty}$ is not approximately indivisible. However, because the proof is not based on the intrinsic geometry of $\ell _2$, the impression somehow persists that something is still missing in our understanding of the metric structure of $\mathbb{S}^{\infty}$. That fact was one of the motivations for \cite{LANVT} as well as for the present paper: our hope is that understanding the indivisibility problem for another remarkable space, namely the Urysohn sphere $\s$, will help to reach a better grasp of $\mathbb{S}^{\infty}$. The space $\s$ is defined as follows: up to isometry, it is the unique complete separable ultrahomogeneous metric space with diameter $1$ into
which every separable metric space with diameter less or equal to $1$ embeds isometrically. Equivalently, it is also the sphere of radius $1/2$ in the so-called universal Urysohn space $\Ur$, a space to which it is closely related. The story of $\s$ is quite uncommon: like $\Ur$, it was constructed in the late twenties by Urysohn (hence quite early in the history of metric geometry) but was completely forgotten for a long time. It is only recently that it was brought back on the research scene, thanks in particular to the work of Kat\v{e}tov \cite{K} which was quickly followed by several results due to Uspenskij \cite{Us1}, \cite{Us2} and later supported by several contributions by Vershik \cite{Ve1}, \cite{Ve2}, Gromov \cite{G}, Pestov \cite{Pe0} and Bogatyi \cite{B1}, \cite{B2}. Today, the spaces $\Ur$ and $\s$ are objects of active research and are studied by many different authors under many different aspects, see \cite{Pr}. 

Apart from the fact that both $\mathbb{S} ^{\infty}$ and $\s$ are complete, separable and ultrahomogeneous, the study of $\s$ is believed to be relevant for the distortion problem for $\ell _2$ because, from a Ramsey-theoretic point of view, the spaces $\mathbb{S}^{\infty}$ and $\s$ behave in a very similar way. For example, the following analog of Milman's theorem holds for $\s$: 

\begin{thm*}[Pestov \cite{Pe0}]

Let $\gamma$ be a finite partition of $\s$. Then for every
$\varepsilon > 0$ and every compact $K \subset \s$, there is $\Gamma \in \gamma$ and an isometric copy
$\widetilde{K}$ of $K$ in $\s$ such that
$\widetilde{K} \subset (\Gamma)_{\varepsilon}$.

\end{thm*}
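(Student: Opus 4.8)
The plan is to reduce this statement — which is about an arbitrary finite partition of the \emph{complete} space $\s$ and an arbitrary compact subset — to a purely finite fact, namely the indivisibility of a metrically homogeneous space with finitely many distances for copies of one fixed finite configuration, and then to recover the general case by a soft approximation argument resting only on the ultrahomogeneity of $\s$.

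First I would fix the finite skeleton. Given $\varepsilon > 0$, a compact $K \subseteq \s$ and a finite partition $\gamma = \{\Gamma_1, \dots, \Gamma_n\}$ of $\s$, total boundedness of $K$ supplies a finite $(\varepsilon/2)$-net $F \subseteq K$; thus $F$ is a finite metric subspace of $\s$ of diameter at most $1$, and every point of $K$ lies within $\varepsilon/2$ of $F$. Let $S$ be the closure of $\{d^{\s}(x,y) : x,y \in F\} \cup \{1\}$ under the operation $(s,s') \mapsto \min(1, s+s')$. Since $F$ is finite and all its distances are positive, $S$ is finite; because $S$ is closed under this operation, the class $\mathcal{M}_S$ of finite metric spaces with distances in $S$ is a Fra\"{\i}ss\'e class, $F$ embeds isometrically into its Fra\"{\i}ss\'e limit $\m{U}_S$, and $\m{U}_S$ — a countable metric space of diameter $1$ — embeds isometrically into $\s$.

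The one substantial step is the finite indivisibility of $\m{U}_S$: for the fixed $F$ above, every partition of $\m{U}_S$ into $n$ pieces has a piece containing an isometric copy of $F$. Here I would invoke a Ramsey theorem for metric spaces — the Ramsey property of the class of finite \emph{ordered} metric spaces with distances in $S$, due to Ne\v{s}et\v{r}il and a prototypical instance of the Kechris--Pestov--Todorcevic correspondence — and deduce, by the standard compactness argument with the one-element metric space playing the role of the small structure, that for every $n$ there is a finite $C \in \mathcal{M}_S$ such that every $n$-colouring of the points of $C$ admits a monochromatic isometric copy of $F$ inside $C$. Realising $C$ inside a copy $\m{V} \subseteq \s$ of $\m{U}_S$ and colouring the points of $\m{V}$ according to which part of $\gamma$ they belong to, I obtain an index $i$ and an isometric copy $\tilde F$ of $F$ with $\tilde F \subseteq \Gamma_i$.

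It remains to assemble the pieces. By ultrahomogeneity of $\s$, the isometry $F \to \tilde F$ extends to some $g \in \iso(\s)$; put $\tilde K = g(K)$, an isometric copy of $K$ in $\s$ with $g(F) = \tilde F \subseteq \Gamma_i$. If $x \in \tilde K$, write $x = g(y)$ with $y \in K$ and choose $q \in F$ with $d^{\s}(y,q) \le \varepsilon/2$; then $d^{\s}(x, g(q)) = d^{\s}(y,q) \le \varepsilon/2$ and $g(q) \in \Gamma_i$, so $x \in (\Gamma_i)_{\varepsilon}$. Hence $\tilde K \subseteq (\Gamma_i)_{\varepsilon}$, which is what we want. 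The crux is therefore the finite Ramsey-theoretic input in the third step; the rest is bookkeeping, and the only point needing some care is keeping $S$ finite while $F$ still embeds into $\m{U}_S$ exactly — harmless here precisely because the conclusion only asks for a copy of $K$ inside the $\varepsilon$-neighbourhood of a part, not an exact one. (It is worth noting that $\iso(\mathbb{S}^{\infty})$ is extremely amenable as well, so no purely soft argument from extreme amenability can be the whole story; this is of a piece with the Odell--Schlumprecht phenomenon recalled above.)
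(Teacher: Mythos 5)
The paper states this theorem without proof, quoting it from Pestov's article \cite{Pe0}, so there is no in-paper argument to compare yours against. Your proof is correct, but it is quite different from Pestov's original: his argument is analytic, establishing via concentration of measure that $\iso(\s)$ is a L\'evy group and hence extremely amenable, from which the finite oscillation statement follows by soft means; no finite Ramsey theorem for metric spaces appears there. Your route is the combinatorial one later systematized by the Kechris--Pestov--Todorcevic correspondence, and it does work: the truncated-sum closure of the distance spectrum of $F$ together with $1$ is finite, the shortest-path amalgam capped at $1$ shows $\mathcal{M}_S$ is a Fra\"iss\'e class with all values in $S$, and the final ultrahomogeneity-plus-$\varepsilon/2$ bookkeeping is clean. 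The one substantial ingredient you invoke rather than prove --- the finite Ramsey property for ordered metric spaces with distances in $S$, applied with a single point as the small structure --- is a genuine black box; note that Ne\v{s}et\v{r}il's theorem postdates \cite{Pe0} (so Pestov could not have argued this way), and its standard published statements are for rational or $\{1,\dots,m\}$-valued distances rather than arbitrary finite $S$ satisfying the amalgamation condition, so a careful write-up should pin down exactly which version is being used. Finally, your closing parenthetical is slightly off target for the statement at hand: the \emph{finite} oscillation statement you are proving does follow softly from extreme amenability and holds equally for $\mathbb{S}^\infty$ (this is Milman's theorem, quoted earlier in the paper); what fails for $\mathbb{S}^\infty$ by Odell--Schlumprecht is the \emph{infinite} version, Theorem \ref{thm:s mos}, and that is where extreme amenability genuinely stops being the whole story.
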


In fact, since the work of Gromov and Milman \cite{GM} and of Pestov \cite{Pe0}, it is known that this analogy is only the most elementary form of a very general Ramsey-theoretic theorem. It is also known that this latter result has a very elegant reformulation at the level of the surjective isometry groups $\iso (\mathbb{S}^{\infty})$ and $\iso (\s)$ (seen as topological groups when equipped with the pointwise convergence topology). Call a topological group $G$ \emph{extremely amenable} when every continuous action of $G$ on a compact space admits a fixed point. Then on the one hand:

\begin{thm*}[Gromov-Milman \cite{GM}]  

The group $\iso (\mathbb{S}^{\infty})$ is extremely amenable. 

\end{thm*}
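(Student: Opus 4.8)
The plan is to deduce the extreme amenability of $G := \iso(\mathbb{S}^\infty)$ from the concentration of measure phenomenon, via the \emph{L\'evy group} machinery. First I would recall the two structural facts on which everything rests: that $G$ is naturally identified, through a Mazur--Ulam / Tingley type argument, with the orthogonal group $O(\ell_2)$ of the Hilbert space equipped with the strong operator (equivalently pointwise convergence) topology; and that $O(\ell_2)$ is the closure of the increasing union $\bigcup_{n \in \N} O(n)$, where $O(n)$ is regarded as the compact subgroup fixing every coordinate beyond the first $n$. One then fixes a right-invariant compatible metric $d$ on $G$ with the property that, writing $\mu_n$ for the normalized Haar measure of $O(n)$, the triples $(O(n), d, \mu_n)$ form a \emph{L\'evy family}: for every $\varepsilon > 0$, whenever $A_n \subseteq O(n)$ satisfies $\liminf_n \mu_n(A_n) > 0$, one has $\mu_n\bigl((A_n)_\varepsilon\bigr) \to 1$, where $(A_n)_\varepsilon$ denotes the $\varepsilon$-neighbourhood of $A_n$ in $(G,d)$.

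The analytic heart of the matter is the existence of such a L\'evy family, and this is classical. Since $O(n)$ acts transitively on $\mathbb{S}^{n-1}$ with orbit map pushing $\mu_n$ onto the uniform probability measure, concentration of Haar measure on $O(n)$ follows from L\'evy's isoperimetric inequality on the sphere, whose concentration function is bounded by $Ce^{-cn\varepsilon^2}$; the refinements of Milman and Gromov package this into the L\'evy-family statement, which I would quote. In particular the chain $(O(n))_n$ exhibits $G$ as a L\'evy group.

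The remainder is the standard fixed-point argument for L\'evy groups. Suppose $G$ acts continuously on a compact space $X$; I must produce a fixed point. Fix $x_0 \in X$ and let $\nu_n \in P(X)$ be the push-forward of $\mu_n$ under the orbit map $\phi : g \mapsto g x_0$; by weak$^*$ compactness of $P(X)$, some subnet of $(\nu_n)$ converges to a measure $\nu$. Since $\mu_n$ is left-invariant and $g\,O(n) = O(n)$ for $g \in O(m)$ and $n \geq m$, each such $\nu_n$ is $O(m)$-invariant, hence so is $\nu$; as this holds for all $m$ and $\bigcup_m O(m)$ is dense, $\nu$ is $G$-invariant. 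To upgrade $\nu$ to a fixed point I combine concentration with the compactness of $X$: a tube-lemma argument shows that $G \times X \to X$ is uniformly continuous near $\{e\} \times X$, so for each $\rho > 0$ there is $\varepsilon > 0$ with $d(e,h) < 2\varepsilon \Rightarrow \sup_{y \in X} d_X(hy, y) < \rho$ for a fixed compatible metric $d_X$ on $X$, and by right-invariance of $d$ the orbit maps $g \mapsto g x_0$ become equicontinuous. If $\nu$ were not a Dirac mass, choose $p \neq q$ in its support and disjoint open sets $U \ni p$, $V \ni q$ with $d_X(U,V) \geq \rho > 0$. Then $A_n := \phi^{-1}(U) \cap O(n)$ and $B_n := \phi^{-1}(V) \cap O(n)$ satisfy $\liminf_n \mu_n(A_n) \geq \nu(U) > 0$ and $\liminf_n \mu_n(B_n) \geq \nu(V) > 0$, so for large $n$ the L\'evy property yields $\mu_n\bigl((A_n)_\varepsilon\bigr) > 1/2$ and $\mu_n\bigl((B_n)_\varepsilon\bigr) > 1/2$; hence $(A_n)_\varepsilon \cap (B_n)_\varepsilon \neq \emptyset$, producing $a \in A_n$, $b \in B_n$ with $d(a,b) < 2\varepsilon$ and therefore $d_X(a x_0, b x_0) < \rho$ --- contradicting $a x_0 \in U$ and $b x_0 \in V$. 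Thus $\nu = \delta_p$, and $G$-invariance of $\nu$ gives $\delta_{gp} = \delta_p$ for every $g$, i.e. $p$ is a fixed point.

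I expect the only genuine obstacle to be the concentration input and its global packaging: one needs the sharp exponential isoperimetric estimate on $\mathbb{S}^{n-1}$ (hence on $O(n)$), and one must produce a single right-invariant compatible metric on $O(\ell_2)$ whose restrictions to the $O(n)$ stay compatible with those estimates --- this reconciliation is the delicate ingredient of the L\'evy group formalism. The combinatorial fixed-point argument above is soft, and the identification $\iso(\mathbb{S}^\infty) \cong O(\ell_2)$, though not entirely trivial, is a standard fact about Hilbert space that I would cite rather than reprove.
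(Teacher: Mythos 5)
The paper never proves this statement: it is quoted as a known theorem and attributed to Gromov--Milman \cite{GM}, and a few lines later the paper even records the relevant explanation --- that $\iso(\mathbb{S}^\infty)$ is a L\'evy group and that the L\'evy property implies extreme amenability, both proved in \cite{GM}. Your proposal is precisely this L\'evy-group argument, in the now-standard packaging due to Pestov: identify $\iso(\mathbb{S}^\infty)$ with $O(\ell_2)$, exhibit the exhausting chain $O(n)$, push Haar measures along an orbit map to produce an invariant measure on the compactum, and use concentration plus right-invariance of the metric to force that measure to be Dirac. The fixed-point part of your argument is correct, including the portmanteau step $\liminf\mu_n(A_n)\geqslant\nu(U)$ and the use of right-invariance to convert $d(a,b)<2\varepsilon$ into $d(ab^{-1},e)<2\varepsilon$. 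So: same route, and correctly executed modulo the cited inputs.

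One small inaccuracy in the part you yourself flagged as the ``delicate ingredient'': you suggest getting concentration on $O(n)$ by pushing Haar measure to the uniform measure on $\mathbb{S}^{n-1}$. That push-forward goes the wrong way --- it transfers concentration \emph{from} $O(n)$ \emph{to} the sphere, not conversely, because the orbit map is $1$-Lipschitz but very far from injective. What Gromov and Milman actually use is the Gromov--L\'evy isoperimetric comparison for Riemannian manifolds with a positive Ricci lower bound: $SO(n)$ with its normalized bi-invariant metric has Ricci curvature growing linearly in $n$, and this is what makes $(O(n),d,\mu_n)$ a L\'evy family for a suitable right-invariant compatible metric $d$ on $O(\ell_2)$. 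Since you are citing this input rather than proving it, the slip does not affect the architecture of your argument, but the attribution to ``L\'evy's inequality on the sphere'' is not the mechanism that is actually being invoked.
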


While on the other hand: 

\begin{thm*}[Pestov \cite{Pe0}]  

The group $\iso (\s)$ is extremely amenable. 

\end{thm*}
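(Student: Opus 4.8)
The plan is to reduce extreme amenability of $G = \iso(\s)$, taken with its topology of pointwise convergence, to a purely finitary Ramsey-type statement about finite metric spaces of diameter at most $1$, and then to supply that statement. I would use the dynamical-combinatorial dictionary for isometry groups of ultrahomogeneous metric spaces in the form of Pestov's criterion: $G$ is extremely amenable exactly when every bounded left-uniformly continuous $f : \funct{G}{\R}$ is \emph{finitely oscillation stable}, that is, for every $\varepsilon > 0$ and every finite $F \subseteq G$ there is $g \in G$ with $\sup\{\,|f(ag) - f(bg)| : a, b \in F\,\} < \varepsilon$. Fix such an $f$, an $\varepsilon > 0$, and $F = \{g_1, \dots, g_m\} \subseteq G$.

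Boundedness and uniform continuity of $f$ produce a finite $S \subseteq \s$, a $\delta > 0$, and a map $\widetilde{f}$ from the isometric copies of $S$ in $\s$ to a \emph{finite} subset of $\R$ such that $f(g)$ is determined, up to error $\varepsilon / 3$, by the placement of the copy $g(S)$, to within $\delta$. Put $\m{B} = S \cup g_1(S) \cup \dots \cup g_m(S)$: a finite metric subspace of $\s$ of diameter $\leqslant 1$ containing $m+1$ distinguished copies of $S$. By ultrahomogeneity and the universality of $\s$ (every separable metric space of diameter $\leqslant 1$ embeds isometrically into $\s$), it is enough to find, for every $k$ and every $\eta > 0$, a finite metric space $\m{C}$ of diameter $\leqslant 1$ such that any $k$-colouring of the copies of $S$ in $\m{C}$ admits an isometric copy of $\m{B}$ in $\m{C}$ all of whose sub-copies of $S$ lie within Hausdorff distance $\eta$ of copies of $S$ of one common colour. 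Indeed, embedding such a $\m{C}$ in $\s$, colouring the copies of $S$ by $\widetilde{f}$, and choosing $\eta$ small against $\delta$, one obtains a copy of $S$ together with translated copies on which $\widetilde{f}$ --- hence $f$ precomposed with the relevant elements of $G$ --- oscillates by less than $\varepsilon$; an element $g$ carrying $S$ onto that copy then witnesses the criterion (equivalently, yields the fixed point of the continuous $G$-flow from which $f$ was extracted). Throughout, all auxiliary spaces are kept of diameter $\leqslant 1$ so as to sit inside $\s$.

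The crux is the existence of $\m{C}$: in the terminology of Kechris, Pestov and Todorcevic it is the \emph{approximate Ramsey property} for the class of finite metric spaces of diameter at most $1$ --- the one finite combinatorial fact on which everything rests. It is tightly bound to the structural Ramsey theory of metric spaces; in particular it can be derived from Ne\v{s}et\v{r}il's theorem that the class of finite \emph{ordered} metric spaces is a Ramsey class, although the derivation is itself substantial. One rigidifies by a small generic perturbation of the distances --- available because $\s$ realises \emph{all} real distances in $(0, 1]$, unlike finitely-valued analogues such as the random graph seen as the $\{1, 2\}$-valued Urysohn space, whose isometry group is not extremely amenable --- then applies the ordered Ramsey theorem and forgets the orders, all the while keeping every copy of $S$ inside the target copy of $\m{B}$ under simultaneous control; it is precisely this flexibility of the real-valued distances that lets $\iso(\s)$ itself, and not merely the stabiliser of a generic linear order, come out extremely amenable. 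The original argument of Pestov \cite{Pe0}, patterned on the Gromov--Milman proof for $\iso(\mathbb{S}^{\infty})$ \cite{GM}, follows a different route: it realises $\iso(\s)$ as a L\'evy group --- a directed union of (pre)compact subgroups whose normalised Haar measures concentrate, with the concentration imported from classical L\'evy families such as the symmetric groups equipped with normalised Hamming metrics --- and invokes that L\'evy groups are extremely amenable.

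The main obstacle is the metric Ramsey theorem itself: establishing that finite (ordered) metric spaces form a Ramsey class is genuinely hard and relies on the partite-construction and amalgamation machinery of structural Ramsey theory. The remaining ingredients --- tracking the parameters $\varepsilon, \delta, \eta$ through the reduction, keeping all auxiliary finite metric spaces of diameter $\leqslant 1$, and making precise the passage between the ordered (rigid) and unordered settings, equivalently the assertion that the universal minimal flow of $\iso(\s)$ is a single point --- are routine but delicate.
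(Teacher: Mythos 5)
The paper does not prove this statement: it is quoted as background due to Pestov \cite{Pe0}, cited in the introduction purely to set up the analogy between $\mathbb{S}^{\infty}$ and $\s$. There is therefore no in-paper argument to compare yours against, and your proposal has to be judged on its own.

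As a sketch, what you describe is coherent and is essentially the Kechris--Pestov--Todorcevic route: reduce extreme amenability of $\iso(\s)$ to finite oscillation stability of bounded left-uniformly continuous functions, then to an approximate Ramsey statement for finite metric subspaces of diameter at most $1$, and obtain that statement from the exact Ramsey property of finite \emph{ordered} metric spaces (Ne\v{s}et\v{r}il) after a rigidifying perturbation. Two caveats are worth recording. First, the approximate Ramsey property is the whole content of the theorem, and your sketch defers it entirely; the passage from the ordered exact statement to the unordered approximate one --- what you call ``rigidifying by a small generic perturbation and forgetting the orders while keeping every copy of $S$ under simultaneous control'' --- is a genuine argument, not a remark, and in any case Ne\v{s}et\v{r}il's metric Ramsey theorem postdates \cite{Pe0}, so it cannot be Pestov's original route. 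Second, your description of the ``original argument of Pestov \cite{Pe0}'' as realising $\iso(\s)$ as a L\'evy group conflates two of Pestov's papers: the paper under review credits the L\'evy property of $\iso(\s)$ to the later \cite{Pe2}, while \cite{Pe0} establishes extreme amenability by a Ramsey--Milman-type concentration argument that stops short of the full L\'evy structure. Neither point is fatal to the plan you outline, but together they mean that what you have is a well-oriented roadmap to a cited theorem rather than a proof of it.
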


Actually, even more is known as both $\iso (\mathbb{S}^{\infty})$ and $\iso (\s)$ are known to satisfy the so-called \emph{L\'evy property} (cf Gromov-Milman \cite{GM} for $\iso (\mathbb{S}^{\infty})$ and Pestov \cite{Pe2} for $\iso (\s)$), a property shown to be stronger than extreme amenability by Gromov and Milman in \cite{GM}. 

In this note, we prove that:

\begin{thm}  

\label{thm:s mos}

The Urysohn sphere $\s$ is approximately indivisible. 

\end{thm}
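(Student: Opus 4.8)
The plan is to reduce the approximate indivisibility of the Urysohn sphere $\s$ to a purely combinatorial statement about a suitable family of finite-distance ultrahomogeneous metric spaces, as the abstract announces. Fix $\varepsilon>0$ and a finite partition $\gamma$ of $\s$. The first step is to discretize: choose a finite set $S\subset(0,1]$ of allowed distances that is ``dense enough'' relative to $\varepsilon$ — concretely, $S$ should be a finite subset of rationals in $(0,1]$, closed under the relevant operations, such that every point of $\s$ is within $\varepsilon$ (or $\varepsilon/2$, to leave room) of a point of a fixed isometric copy of the countable ultrahomogeneous metric space $\U$ with distances in $S$ that sits inside $\s$. Such a space $\U$ is the ``rational Urysohn sphere with distances in $S$''; it is the Fra\"iss\'e limit of the class of finite metric spaces with distances in $S$, and it embeds into $\s$ with dense image up to the chosen scale.

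The second step is the combinatorial core: one must show that $\U$ is \emph{indivisible} in the exact ($0$-indivisible) sense — for every finite partition of $\U$, one piece contains an isometric copy of $\U$. This is where I expect essentially all the difficulty to lie, and it is presumably the ``purely combinatorial problem involving a family of countable ultrahomogeneous metric spaces with finitely many distances'' referred to in the abstract. The natural approach is an induction on the number of distances in $S$, or on $|S|$ together with the gap structure of $S$: one fixes the largest distance $1$, analyzes the graph of pairs at distance $1$ versus the metric subspace structure on ``balls'' of smaller diameter, and tries to propagate indivisibility from spaces with fewer distances. A Milliken-tree-style or Nešetřil–Rödl-style partition argument, or an appeal to the known indivisibility of the rational Urysohn space together with a careful bounding/rescaling, would be the tools; one has to be careful that the class with distances in $S$ has the requisite amalgamation and strong amalgamation properties so that the Fra\"iss\'e limit is well behaved and copies can be built by a back-and-forth.

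The third step is to transfer back. Given the finite partition $\gamma$ of $\s$, restrict it to the fixed copy of $\U\subset\s$; this yields a finite partition of $\U$, so by the combinatorial result there is $\Gamma\in\gamma$ and a copy $\mc{U}$ of $\U$ inside $\U\cap\Gamma$. Now I would argue that $\mc{U}$, being an isometric copy of $\U$, is itself $\varepsilon$-dense in some isometric copy $\mc{X}$ of $\s$: indeed $\s$ is the completion, after suitable $\varepsilon$-rescaling/approximation, of a copy of $\U$, and ultrahomogeneity of $\s$ lets us move this ambient copy into $\s$. Hence $\mc{X}\subset(\mc{U})_\varepsilon\subset(\Gamma)_\varepsilon$, which is exactly $\varepsilon$-indivisibility of $\s$. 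Since $\varepsilon>0$ was arbitrary, $\s$ is approximately indivisible. The delicate points in this last step are purely quantitative — making sure the choice of $S$ in step one really does guarantee the $\varepsilon$-density of \emph{every} isometric copy of $\U$ (not just the distinguished one) inside an ambient copy of $\s$, which is where ultrahomogeneity of both $\U$ and $\s$ must be used in tandem. I expect step two to be the genuine obstacle; steps one and three are the standard discretization and completion arguments familiar from the theory of the Urysohn space.
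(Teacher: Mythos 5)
Your high-level strategy — discretize $\s$ to a family of countable ultrahomogeneous metric spaces with finitely many distances, prove indivisibility for those, and transfer back — is exactly the decomposition the paper uses. However, there are two substantive problems.

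The serious gap is in your ``transfer back'' step, which you treat as standard. Fixing one copy $\U_0$ of the finite-distance Fra\"iss\'e limit $\U$ inside $\s$, and finding a monochromatic sub-copy $\mc{U} \subset \U_0$, does \emph{not} by itself give you a copy of $\s$ inside $(\mc{U})_{\varepsilon}$. There is no reason why an arbitrary isometric sub-copy $\mc{U}$ of $\U_0$ should be $\varepsilon$-dense in any copy of $\s$: being isometric to $\U$ only pins down the internal metric of $\mc{U}$, not its neighborhood structure inside $\s$. The paper (following Lopez-Abad and Nguyen Van Th\'e) has to work for this: it constructs a specific separable space $\m{Z}$, realized inside $\s$ by universality, containing a copy $\s_m^*$ of the discretized sphere $\s_m$ with the a priori guarantee that \emph{every} sub-copy $\mc{S}_m \subset \s_m^*$ has $(\mc{S}_m)_{1/m}$ containing a copy of $\s_{\Q}$. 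Building $\m{Z}$ is delicate: the obvious strong-amalgamation gluing of a ``thickening'' onto every sub-copy would produce a non-separable space, so a tree-like identification scheme is needed to keep $\m{Z}$ separable. That construction is precisely where your ``ultrahomogeneity of $\U$ and $\s$ in tandem'' heuristic would have to be replaced by an actual argument, and it is not obtained for free.

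A secondary issue is in your sketch of the combinatorial core. You suggest as possible tools ``an appeal to the known indivisibility of the rational Urysohn space together with a careful bounding/rescaling,'' but the rational Urysohn sphere $\s_{\Q}$ is in fact \emph{divisible} (Delhomm\'e--Laflamme--Pouzet--Sauer), so that route is closed — indeed, that failure is what motivates the finite-distance discretization in the first place. Free-amalgamation / Ne\v{s}et\v{r}il--R\"odl style partition arguments also only cover the cases of at most three distances; for more distances, amalgamation of metric spaces is not free, and the paper has to introduce a new induction on the minimum of a Kat\v{e}tov function, built around a bespoke notion of ``largeness.'' So while your three-step plan is the right skeleton, both the transfer step and the indicated toolbox for the combinatorial step need to be replaced by the specific constructions the paper supplies.
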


In other words, for every finite partition $\gamma$ of $\s$ and  $\varepsilon > 0$, there is $\Gamma \in \gamma$ such that $(\Gamma)_{\varepsilon}$ includes an isometric copy of $\s$. Or equivalently, in terms of oscillation stability for topological groups, the standard action of $\iso (\s)$ on $\s$ is oscillation stable. Theorem \ref{thm:s mos} therefore exhibits an essential Ramsey-theoretic distinction between $\mathbb{S}^{\infty}$ and $\s$. At the level of $\iso (\mathbb{S}^{\infty})$ and $\iso (\s)$, it answers a question mentioned by Kechris, Pestov and Todorcevic in \cite{KPT}, Hjorth in \cite{Hj} and Pestov in \cite{Pe1}, and highlights a deep topological difference which, for the reasons mentioned previously, was not at all apparent until now. 

Our proof of Theorem \ref{thm:s mos} is combinatorial and rests on a discretization method largely inspired from the proof by Gowers of the stabilization theorem for the unit sphere $\mathbb{S}_{c_0}$ of $c_0$ and its positive part $\mathbb{S}_{c_0} ^+$. Recall that $c_0$ is the space of all real sequences converging to $0$ equipped with the $\left\| \cdot \right\|_{\infty}$ norm, and that $\mathbb{S}_{c_0} ^+$ is the set of all those elements of $\mathbb{S}_{c_0}$ taking only positive values. In \cite{Go}, Gowers studied the indivisibility properties of the spaces $\mathrm{FIN}_m$ (resp. $\mathrm{FIN}_m ^+$) of all the elements of $\mathbb{S}_{c_0}$ taking only values in $\{ k/m : k \in [-m,m] \cap \mathbb{Z} \}$ (resp. $\{ k/m : k \in \{0, 1,\ldots , m \} \}$) where $m$ ranges over the strictly positive integers:

\begin{thm*}[Gowers \cite{Go}] 

Let $m \in \N$, $m \geqslant 1$. Then $\mathrm{FIN}_m$ (resp. $\mathrm{FIN}_m ^+$) is $1$-indivisible (resp. indivisible).

\end{thm*}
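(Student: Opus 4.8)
The quoted statement is the metric face of Gowers' combinatorial partition theorem, and the plan is to reduce to the latter and then prove it by the idempotent-ultrafilter method. In combinatorial form it reads: for every finite colouring of $\mathrm{FIN}_m^+$ there is an infinite \emph{block sequence} $(b_i)_{i\in\N}$, i.e.\ one with $\max\mathrm{supp}(b_i)<\min\mathrm{supp}(b_{i+1})$ for all $i$ (writing $\mathrm{supp}(p)=\{n:p(n)\ne 0\}$), whose generated partial subsemigroup
\[
[b_i:i\in\N]=\Bigl\{\textstyle\sum_{\ell=1}^{n}T^{a_\ell}(b_{i_\ell}) : i_1<\dots<i_n,\ a_\ell\in\{0,\dots,m-1\},\ \min_\ell a_\ell=0\Bigr\}
\]
is monochromatic, where $T(p)(n)=\max\{p(n)-1/m,0\}$ is the \emph{tetris operation}. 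The first, routine step is to check that $[b_i:i\in\N]$ with the induced $\ell_\infty$-metric is isometric to $\mathrm{FIN}_m^+$: because every $b_i$ attains the value $1$ one has $\|T^a(b_i)-T^{a'}(b_i)\|_\infty=|a-a'|/m$ and $\|T^a(b_i)\|_\infty=(m-a)/m$, and since the $b_i$ have pairwise disjoint supports the distance between two members of $[b_i:i\in\N]$ is a fixed function of the discrete data $(i_\ell,a_\ell)$ — the same function one gets for the canonical block sequence $b_i=\mathbf 1_{\{i\}}$, whose partial subsemigroup is all of $\mathrm{FIN}_m^+$. So matching the data yields an isometry, and the combinatorial theorem gives the indivisibility of $\mathrm{FIN}_m^+$ outright.

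To prove the combinatorial theorem I would argue along the $m$ tetris layers. For $1\le j\le m$ let $\mathrm{FIN}_m^{(j)}$ be the set of $p\in\mathrm{FIN}_m^+$ with values in $\{0,1/m,\dots,j/m\}$ and $\max p=j/m$, so $\mathrm{FIN}_m^{(m)}=\mathrm{FIN}_m^+$, the tetris map restricts to a surjection $T:\mathrm{FIN}_m^{(j)}\to\mathrm{FIN}_m^{(j-1)}$ with a support-preserving section, and block addition sends $\mathrm{FIN}_m^{(i)}\times\mathrm{FIN}_m^{(j)}$ into $\mathrm{FIN}_m^{(\max(i,j))}$. Let $S_j\subseteq\beta(\mathrm{FIN}_m^{(j)})$ consist of the nonprincipal ultrafilters containing every set $\{p:\mathrm{supp}(p)\subseteq(n,\infty)\}$, $n\in\N$; the usual recipe makes each $S_j$ a compact right-topological semigroup under the extension of block addition, makes $\bigsqcup_j S_j$ a partial semigroup respecting the layers, and makes $T$ a continuous surjective homomorphism $\bar T:S_j\to S_{j-1}$. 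The heart of the argument is a lemma producing a \emph{coherent} family of idempotents $\mathcal U_1,\dots,\mathcal U_m$, with $\mathcal U_j\in S_j$, $\bar T(\mathcal U_j)=\mathcal U_{j-1}$, and $\mathcal U_i+\mathcal U_j=\mathcal U_{\max(i,j)}$ for all $i,j$; one constructs it by an induction over the layers in which minimality — working inside the smallest ideal — is used to force the cross-layer unit relations. Given a colouring $\chi$, pick colours $c_j$ with $A_j:=\{p\in\mathrm{FIN}_m^{(j)}:\chi(p)=c_j\}\in\mathcal U_j$ and run a Galvin--Glazer recursion, using repeatedly that for an idempotent $\mathcal U$ and $A\in\mathcal U$ the set $A^{\star}:=\{p\in A:\{q:p+q\in A\}\in\mathcal U\}$ is again in $\mathcal U$, but now carried out in all $m$ layers at once — the set of admissible next vectors being, by coherence, an intersection of $\mathcal U_m$-large sets, hence nonempty. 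The block sequence it produces has the property that each partial combination already built is $\star$-good for $A_j$ in every layer $j$ at or above its own; in particular $[b_i:i\in\N]\subseteq\mathrm{FIN}_m^{(m)}$ is monochromatic of colour $c_m$.

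For the two-sided space, $1$-indivisibility should follow from the positive case. Since $\mathrm{FIN}_m^+$ sits isometrically in $\mathrm{FIN}_m$, a colouring of $\mathrm{FIN}_m$ restricts to one of $\mathrm{FIN}_m^+$, so the previous step yields a block sequence $(b_i)_{i\in\N}$ of positive vectors with $[b_i:i\in\N]$ inside one colour class $\Gamma$. Put
\[
Z=\Bigl\{\textstyle\sum_{\ell=1}^{n}\varepsilon_\ell\,T^{a_\ell}(b_{i_\ell}) : i_1<\dots<i_n,\ \varepsilon_\ell\in\{-1,1\},\ a_\ell\in\{0,\dots,m-1\},\ \min_\ell a_\ell=0\Bigr\}.
\]
The same distance bookkeeping shows $Z$ is isometric to $\mathrm{FIN}_m$: allowing signs restores exactly the distance-$2$ pairs that $[b_i:i\in\N]$ lacks. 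And $Z\subseteq(\Gamma)_1$: given $z=\sum_\ell\varepsilon_\ell T^{a_\ell}(b_{i_\ell})\in Z$, set $w_0=\sum_{\varepsilon_\ell=1}T^{a_\ell}(b_{i_\ell})$ and let $w=w_0$ if some surviving summand has $a_\ell=0$ (so that $w_0\in[b_i:i\in\N]$), and $w=w_0+b_k$ otherwise, $b_k$ being one block whose support is disjoint from those of $b_{i_1},\dots,b_{i_n}$. Then $w\in[b_i:i\in\N]\subseteq\Gamma$, and on the supports of the discarded (negative) summands, and on that of $b_k$ in the second case, $z-w$ equals $-T^{a_\ell}(b_{i_\ell})$ or $-b_k$, of $\ell_\infty$-norm at most $1$, while $z-w$ vanishes elsewhere; hence $\|z-w\|_\infty\le 1$. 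Thus $\mathrm{FIN}_m$ is $1$-indivisible, the constant $1$ being the cost of zeroing out a sign-reversed piece that may carry the maximal value $1$ — the tetris semigroup being unable to flip signs.

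The step I expect to be the real obstacle is the construction of the coherent idempotent family $\mathcal U_1,\dots,\mathcal U_m$: choosing a minimal idempotent in each $S_j$ independently respects neither the tetris homomorphisms nor the cross-layer addition, and arranging all these compatibilities at once — which is precisely what lets the $m$ layers of the Galvin--Glazer recursion communicate — is the delicate semigroup-theoretic point, the one place where minimality is genuinely used. The remaining ingredients — the dictionary between the metric and combinatorial statements, the isometry verifications, and the two-sided reduction — are routine.
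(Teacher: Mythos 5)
The theorem you were asked to prove is one the paper does not prove: it is quoted from Gowers' paper \cite{Go} as motivational background (it is the model for the paper's own pair Lemma~\ref{thm:orbit}/Lemma~\ref{lem:psi} plus the discretization result of \cite{LANVT}). So there is no in-paper argument to compare against; what can be assessed is whether your sketch is a plausible reconstruction of Gowers' actual proof, and it is. You have correctly identified the combinatorial core (the $\mathrm{FIN}_m$ pigeonhole theorem for block sequences and tetris operations), the algebraic setting (the compact right-topological semigroups of cofinite ultrafilters on the layers $\mathrm{FIN}_m^{(j)}$, with $\bar T$ a continuous homomorphism respecting the partial addition), and the driving lemma (a chain of idempotents $\mathcal U_1,\dots,\mathcal U_m$ coherent for both $\bar T$ and the idempotent order $\mathcal U_i+\mathcal U_j=\mathcal U_j+\mathcal U_i=\mathcal U_{\max(i,j)}$); this is precisely the structure of the argument in \cite{Go} and of its later exposition by Todorcevic and by Argyros--Todorcevic. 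Your isometry bookkeeping for $[b_i:i\in\N]\cong\mathrm{FIN}_m^+$ is correct (the point is that $\|p-q\|_\infty$ for $p,q$ in the partial subsemigroup is the maximum over blocks of a function of the local combinatorial data, and normalization of each $b_i$ makes this function match the canonical case), and the one-sided reduction $\mathrm{FIN}_m^+\Rightarrow\mathrm{FIN}_m$ via zeroing negative summands, with the normalizing $b_k$ inserted when the unique exponent $0$ lands on a discarded sign, is the standard argument for the cost-$1$ approximation.

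The one place your write-up is genuinely underspecified is the one you flag yourself: the construction of the coherent idempotent chain. Saying ``work inside the smallest ideal'' does not by itself produce compatibility with the tetris homomorphisms, and the actual argument builds the chain downward in the idempotent order using the lemma that below any idempotent of a compact right-topological semigroup one can find a minimal idempotent, interleaved with pullbacks along $\bar T$ (which are closed subsemigroups because $\bar T$ is a continuous onto homomorphism). Since you explicitly flag this as the delicate point and the rest of your recursion (pulling $\mathcal U_j$-large constraints back to $\mathcal U_m$-large ones via $T^{-(m-j)}$) is sound, this is an honest sketch rather than a gap — but in a full write-up this lemma is where all the work is, and it should not be compressed to one clause.
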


A strong form of these results (see \cite{Go} for the precise statement) then led to:

\begin{thm*}[Gowers \cite{Go}] 
The sphere $\mathbb{S}_{c_0}$ (resp. $\mathbb{S}_{c_0} ^+$) is approximately indivisible. 
\end{thm*}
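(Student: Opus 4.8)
The plan is to reduce the approximate indivisibility of $\s$ to a family of purely combinatorial statements about countable ultrahomogeneous metric spaces with finitely many distances, and then to prove those statements by a discretization argument in the spirit of Gowers' treatment of $\mathbb{S}_{c_0}^+$. First I would fix a rational $\varepsilon > 0$ and choose a finite set $S$ of distances in $(0,1]$ — essentially a sufficiently fine subset of $\{1/m,2/m,\dots,1\}$ for a large $m$ — and consider the countable ultrahomogeneous metric space $\m{U}_S$ whose distances are drawn from $S$ and which is universal for finite metric spaces with distances in $S$. The key point is that $\s$ can be approximated, in a metric sense controlled by $\varepsilon$, by such a space: any partition of $\s$ restricts to a partition of an isometric copy of $\m{U}_S$ sitting inside $\s$, and conversely a copy of $\m{U}_S$ caught inside an $\varepsilon$-fattening $(\Gamma)_{\varepsilon}$ can be "blown up" to a copy of $\s$ inside a slightly larger fattening. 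So the reduction is: it suffices to prove that $\m{U}_S$ is indivisible, i.e. $0$-indivisible, for every such finite $S$.

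Next I would attack the indivisibility of $\m{U}_S$ directly. The strategy, following Gowers, is to set up a discrete combinatorial structure — a tree or a "variable words" type object built from the distance set $S$ — whose Ramsey properties encode the ability to find a monochromatic copy of $\m{U}_S$. Concretely, one represents points of (a copy of) $\m{U}_S$ by finite sequences, with the lexicographic-type order $<^{\m{U}_S}_{lex}$ used to linearize the structure, and shows that $\m{U}_S$ embeds into an appropriate combinatorial object $\uUr$ in a way compatible with colorings. The heart of the matter is a Hales–Jewett / Gowers-style pigeonhole: given a finite coloring, one extracts a combinatorial substructure that is both monochromatic and still rich enough to contain a full isometric copy of $\m{U}_S$. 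This will require an induction on the number of distances in $S$ (or on $m$), with the base case being essentially trivial and the inductive step absorbing one new distance value at a time, exactly as Gowers absorbs one coordinate scale at a time for $\mathrm{FIN}_m^+$.

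The main obstacle I expect is the inductive step in the combinatorial core: unlike $c_0$, where the coordinates are genuinely independent and the relevant operations (tetris, sums) interact cleanly, the triangle inequality among the distances in $S$ couples the "levels" of the discrete structure, so one cannot freely manipulate one distance at a time without risking violating metric constraints. Handling this will require a careful choice of how the distances in $S$ are nested or rounded — likely arranging $S$ so that small distances behave almost additively with respect to the large ones, which is where the freedom to take $m$ large and $\varepsilon$ small is spent — and then verifying that the Ramsey extraction preserves ultrahomogeneity of the resulting sub-copy. A secondary technical point is the passage back from $\m{U}_S$ to $\s$: one must check that the $\varepsilon$-perturbation incurred by discretizing distances, by replacing $\s$ with $\m{U}_S$ and then re-expanding, can be kept below any prescribed threshold, which amounts to a quantitative bookkeeping of how the mesh of $S$ controls the Hausdorff-type distortion. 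Once both the reduction and the combinatorial core are in place, approximate indivisibility of $\s$ follows by letting $\varepsilon \to 0$ along a sequence of finer and finer distance sets $S$.
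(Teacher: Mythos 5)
Your sketch is not a proof of the stated theorem. The statement is Gowers' theorem that the unit sphere $\mathbb{S}_{c_0}$ of $c_0$ (resp.\ its positive part $\mathbb{S}_{c_0}^+$) is approximately indivisible; what you have outlined is instead a strategy for the paper's own main result, Theorem \ref{thm:s mos}, that the Urysohn sphere $\s$ is approximately indivisible. From your very first sentence you fix on $\s$ and its discretizations $\m{U}_S$ (i.e.\ the spaces $\Ur_m$), which are countable ultrahomogeneous metric spaces universal for a finite distance set. These play no role in the theorem you were asked about, and $\mathbb{S}_{c_0}$ is not a universal or ultrahomogeneous metric space, so none of that machinery can even be set up for it.

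For the actual statement, the right discrete objects are $\mathrm{FIN}_m$ and $\mathrm{FIN}_m^+$: the elements of $\mathbb{S}_{c_0}$ (resp.\ $\mathbb{S}_{c_0}^+$) taking values only on the grid $\{k/m : -m \leqslant k \leqslant m\}$ (resp.\ $\{k/m : 0 \leqslant k \leqslant m\}$). Gowers proves that $\mathrm{FIN}_m^+$ is indivisible and $\mathrm{FIN}_m$ is $1$-indivisible via a pigeonhole for block sequences built around the tetris operation, and the passage from this to approximate indivisibility of $\mathbb{S}_{c_0}$ is an elementary rounding step: every point of the sphere is within $1/m$ of $\mathrm{FIN}_m$, and a block-subspace copy of $\mathrm{FIN}_m$ generates a copy of the sphere inside its $1/m$-fattening. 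There is no Kat\v{e}tov extension, no ultrahomogeneity, no universality, and crucially no triangle-inequality coupling to fight; the coordinates of $c_0$ supply exactly the independence whose absence for $\s$ is what made Theorem \ref{thm:s mos} hard. The overall template you describe (discretize, prove exact indivisibility of the discretization, re-expand) is indeed the Gowers blueprint that the paper adapts to $\s$ via $\Ur_m$ — but a proof of \emph{this} statement has to be carried out in the $\mathrm{FIN}_m$ setting, and you have instead reproduced the plan for the other theorem.
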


Here, our proof builds on the following discretization result proved in \cite{LANVT} and involving a family $(\Ur _m)_{m \geqslant 1}$ of
countable metric spaces. For $m\geqslant 1$, the space $\Ur _m$ is defined as follows: up to isometry it is the unique countable ultrahomogeneous metric space with distances in $\{1, \ldots, m \}$ into which every countable metric space with distances in $\{1, \ldots, m \}$ embeds isometrically. Then:

\begin{thm*}[Lopez-Abad - Nguyen Van Th\'e \cite{LANVT}]

\label{thm:TFAE S mos1}

The following are equivalent:

\begin{enumerate}
\item The space $\s$ is approximately indivisible.
\item For every strictly positive $m \in \N$, $\Ur _m$ is indivisible.
\end{enumerate}
\end{thm*}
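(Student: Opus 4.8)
The plan is to prove the two implications separately, linking them through a common \emph{discretization dictionary}. For $m\geq 1$ write $\tfrac{1}{m}\Ur_m$ for $\Ur_m$ with its distances rescaled to lie in $\{1/m,2/m,\dots,1\}$, and on $\s=(S,d)$ consider $\rho_m(x,y)=\tfrac{1}{m}\lceil m\,d(x,y)\rceil$. Since $t\mapsto\lceil mt\rceil$ is nondecreasing and subadditive, $\rho_m$ is again a metric on $S$, taking values in $\{0,1/m,\dots,1\}$ and satisfying $d\leq\rho_m<d+\tfrac{1}{m}$. The first step I would carry out is a \emph{discretization lemma}: for every $m$ there is a $d$-dense countable set $D_m\subseteq S$ for which $(D_m,\rho_m)$ is isometric to $\tfrac{1}{m}\Ur_m$. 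I would produce $D_m$ by a back-and-forth construction inside $\s$; the recurring subtask is, given a finite $F\subseteq D_m$ and a prescribed ``$\rho_m$-Kat\v{e}tov function'' $g\colon F\to\{1/m,\dots,1\}$, to find $q\in D_m$ at $d$-distance in $(g(f)-\tfrac1m,g(f)]$ from every $f\in F$, all the while keeping $D_m$ dense. That $\s$ supplies such approximate Kat\v{e}tov extensions is routine; the one delicate point is that the ceiling defining $\rho_m$ is one-sided (distances can only be rounded up), which forces some care in spreading out the new points.

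Next, for $(1)\Rightarrow(2)$ I would argue by contraposition. Suppose $\Ur_{m_0}$ is divisible and fix a finite colouring $c$ of $\tfrac{1}{m_0}\Ur_{m_0}$, no colour class of which contains an isometric copy of $\tfrac{1}{m_0}\Ur_{m_0}$; put $\varepsilon=\tfrac{1}{4m_0}$. Transport $c$ to $D_{m_0}$ via the lemma, then extend it to a finite colouring of $\s$ by giving each $x\in S$ the colour of some fixed point of $D_{m_0}$ within $\varepsilon$ of $x$ (possible since $D_{m_0}$ is dense). If some $(\Gamma)_\varepsilon$ for this colouring contained a copy $\widetilde S\cong\s$, it would contain an isometric copy $W$ of $\tfrac{1}{m_0}\Ur_{m_0}$ (a countable metric space of diameter $1$); each $w\in W$ lies within $\varepsilon$ of a point of $\Gamma$, hence within $2\varepsilon=\tfrac{1}{2m_0}$ of a point $v_w\in D_{m_0}$ of the designated colour, so $\lvert d(v_w,v_{w'})-d(w,w')\rvert<\tfrac{1}{m_0}$ for $w\neq w'$. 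Because the distances of $W$ sit on the $1/m_0$-lattice on which $\rho_{m_0}$ also lives, $\rho_{m_0}(v_w,v_{w'})$ is confined to a single lattice cell; I expect that a short pigeonhole (or finite Ramsey) step on this cell-discrepancy, after passing to a sub-copy, makes $\{v_w\}$ a monochromatic copy of $\tfrac{1}{m_0}\Ur_{m_0}$, contradicting divisibility. The bookkeeping is, once again, slightly awkward because the rounding is one-sided, but the choice of $\varepsilon$ absorbs it; this direction I expect to be essentially routine.

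Finally, for $(2)\Rightarrow(1)$, assume every $\Ur_m$ indivisible and fix a finite partition $\gamma=\{\Gamma_1,\dots,\Gamma_k\}$ of $\s$ together with $\varepsilon>0$. I would use that $\s$ is the completion of $\Ur_\Q:=\bigcup_n\tfrac{1}{n!}\Ur_{n!}$, the ultrahomogeneous metric space universal for countable spaces with distances in $\Q\cap(0,1]$. Since each $(\Gamma_i)_\varepsilon$ is closed, it suffices to find, for some $i$, an isometric copy of $\Ur_\Q$ inside $(\Gamma_i)_\varepsilon$ — its closure is then the required copy of $\s$ — and I would build such a copy by a back-and-forth recursion, constructing finite sets $F_0\subseteq F_1\subseteq\cdots$ inside $(\Gamma_i)_\varepsilon$ with union isometric to $\Ur_\Q$. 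Indivisibility enters at the extension step: given a finite $F\subseteq(\Gamma_i)_\varepsilon$ and a one-point extension with distances on the $1/m$-lattice to be realized, the set of points of $\s$ realizing it is again highly homogeneous and so contains, up to rescaling, a copy of $\Ur_m$; applying indivisibility of $\Ur_m$ to the trace of $\gamma$ on such a copy, and using the $\varepsilon$-slack so that the realizing point need only lie within $\varepsilon$ of the colour class, yields an admissible extension. The hard part — and the step I expect to be the main obstacle — is to make one \emph{single} colour $i$ work at every stage: plain indivisibility of a fixed $\Ur_m$ delivers only \emph{some} monochromatic copy, so the successive, non-nested monochromatic copies obtained at the scales $1!\mid 2!\mid\cdots$ must be threaded through a single colour class while the completion is simultaneously driven to $\s$. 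I would attempt this by showing that if no colour worked then each of the $k$ colours would be ``blocked'' at some finite stage, and then ruling that out by a pigeonhole over the finitely many colours combined with indivisibility at a sufficiently fine scale. Reconciling this colour-coordination with the $\varepsilon$-slack is, I believe, the real content of the theorem, the discretization lemma and the implication $(1)\Rightarrow(2)$ being comparatively routine once the one-sidedness of the ceiling has been dealt with.
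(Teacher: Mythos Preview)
Your route to $(ii)\Rightarrow(i)$ is genuinely different from the paper's, and the difficulty you flag as ``the main obstacle'' is a real gap that your outline does not close. You want to build $\Ur_\Q$ inside some $(\Gamma_i)_\varepsilon$ by back-and-forth, invoking indivisibility of some $\Ur_m$ at every extension step; but each such application may return a different colour, and your proposed repair (``if no colour works then each is blocked at a finite stage; pigeonhole over the $k$ colours'') does not go through, because the blocking witnesses for the different colours live over unrelated finite configurations and scales, and there is no common finite object on which to pigeonhole. The paper (Section~\ref{section:appendix}) sidesteps the threading problem entirely by applying indivisibility \emph{once}. For a fixed $m$ it first constructs a separable metric space $\m{Z}\supseteq\s_m^*\cong\s_m$ with the \emph{a~priori} property that for \emph{every} sub-copy $\mc{S}_m\subseteq\s_m^*$ isometric to $\s_m$, the set $(\mc{S}_m)_{1/m}$ already contains a copy of $\s_\Q$. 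Concretely, $\m{Z}$ comes from a two-level space $\m{Y}_m$ on $\s_\Q\times\{0,1\}$ (upper level the usual metric, lower level its ceiling-discretization $\m{X}_m$, each point at distance $1/m$ from its shadow) by gluing a copy of $\m{Y}_m$ along \emph{every} copy of $\m{X}_m$ sitting inside $\s_m^*$; the delicate point---and the actual work---is to perform this gluing so that $\m{Z}$ stays separable, since strong amalgamation would manufacture continuum many disjoint copies of $\s_\Q$ pairwise at distance $\geq 1/m$. Once $\m{Z}$ is embedded in $\s$, the given partition $\gamma$ restricts to $\s_m^*$, a single use of indivisibility of $\s_m$ yields a monochromatic $\mc{S}_m\subseteq\Gamma$, and then $(\Gamma)_{1/m}\supseteq(\mc{S}_m)_{1/m}\supseteq\s_\Q$, whose closure is the required copy of $\s$. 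In short, the paper trades your colour-coordination problem for a separable-gluing problem, and solves the latter.

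On $(i)\Rightarrow(ii)$ (which the paper does not sketch), your argument also has a gap. After sending each $w\in W$ to a nearby $v_w\in D_{m_0}$ of the designated colour you only get $\rho_{m_0}(v_w,v_{w'})\in\{d(w,w'),\,d(w,w')+1/m_0\}$; no choice of $\varepsilon$ removes this two-valued ambiguity, because the ceiling defining $\rho_{m_0}$ is one-sided. A ``short pigeonhole/finite Ramsey step'' on the pairwise discrepancies cannot by itself produce a $\rho_{m_0}$-isometric copy of $\Ur_{m_0}$ inside the monochromatic set: that would require a structural Ramsey statement for $\Ur_{m_0}$ at least as strong as the indivisibility you are trying to derive.
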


The main ideas of the implication $(ii) \rightarrow (i)$ are presented for completeness in section \ref{section:appendix} together with an explanation as of why the spaces $\Ur _m$ are relevant as well as why some of the previous attempts to prove Theorem \ref{thm:s mos} failed. For more details, see the original reference \cite{LANVT} or \cite{NVT}. In the present paper, we show: 

\begin{thm} 

\label{thm:U_m indiv}

Let $m \in \N$, $m \geqslant 1$. Then $\Ur _m$ is indivisible. 

\end{thm}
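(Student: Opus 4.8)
\textbf{Proof strategy for Theorem \ref{thm:U_m indiv}.}

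The plan is to prove indivisibility of $\Ur_m$ by induction on $m$, the case $m=1$ being trivial since $\Ur_1$ is a single point (or the complete graph on countably many vertices). For the inductive step, suppose $\Ur_m$ is partitioned into finitely many pieces; I want to find one piece that contains an isometric copy of $\Ur_m$. The key structural observation is that a metric space with distances in $\{1,\dots,m\}$ can be analyzed through its ``large-distance'' graph: fix a vertex and look at the set of points at distance exactly $m$ from it, the set at distance $m-1$, and so on. The set of points at distance $1$ from a fixed point, together with that point, carries a metric with distances in $\{1,2\}$ only near that point, but globally the ball of radius $k$ around a point, with the induced metric, is governed by spaces like $\Ur_{k}$ up to rescaling. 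The main engine will be a Milliken-type or Galvin-type partition theorem: one encodes elements of $\Ur_m$ by nodes of a suitable tree (a Katětov-style enumeration, where the $n$-th point's distances to the first $n-1$ points determine a branching), and one applies a Halpern--L\"auchli / Milliken tree theorem to extract a strongly embedded subtree that is monochromatic and still codes a copy of $\Ur_m$.

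Concretely, here are the steps I would carry out. First, set up the tree representation: build $\Ur_m$ as an increasing union of finite spaces $A_0 \subset A_1 \subset \cdots$ where $A_{n+1}$ adds one point whose distance vector to $A_n$ is an arbitrary ``Katětov function'' with values in $\{1,\dots,m\}$ satisfying the triangle inequality; ultrahomogeneity and universality guarantee that every one-point metric extension is realized, so the set of such distance vectors at stage $n$ forms the branching set of the tree at level $n$. Second, reduce the coloring of points of $\Ur_m$ to a coloring of nodes of this tree, and more precisely to a coloring of ``strong subtrees'' or of finite configurations via the product Ramsey machinery. Third, apply the relevant partition theorem (I expect the right tool to be Milliken's theorem on strong subtrees, or a direct Halpern--L\"auchli argument, combined with a fusion/amalgamation argument to ensure the extracted subtree is ``dense'' enough that the points it codes still form a copy of $\Ur_m$ — this last point uses that $\Ur_m$ is ultrahomogeneous, so any countable space omitting no one-point extensions and with distances in $\{1,\dots,m\}$ is isometric to $\Ur_m$). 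Fourth, verify the extracted monochromatic subspace is genuinely isometric to $\Ur_m$ by checking the one-point extension property survives the extraction.

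The main obstacle, and where the real work lies, is the interplay between the triangle inequality and the tree structure: unlike the free situation in Milliken's theorem (or the $\mathrm{FIN}_m$ setting of Gowers, where values combine additively without constraint), the admissible distance vectors at each node are constrained by the metric triangle inequalities with \emph{all} earlier points, so the branching sets are not independent across levels and a naive product-Ramsey argument does not apply. I expect to handle this by a careful choice of enumeration (e.g., organizing the amalgamation so that triangle-inequality constraints propagate predictably, perhaps by building in ``witnesses'' of maximal and minimal distances early), together with an induction that peels off one distance value at a time — treating points at distance $m$ from a fixed base point separately from the rest, using the inductive hypothesis on $\Ur_{m-1}$-like substructures, and then amalgamating. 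A secondary difficulty is ensuring that after passing to a monochromatic subconfiguration one has not destroyed universality; this requires showing the extraction can be done ``cofinally'' so that every finite metric space with distances in $\{1,\dots,m\}$ still embeds, which is a density argument on the tree that must be run in tandem with the Ramsey extraction rather than after it.
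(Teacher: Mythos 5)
Your proposal correctly identifies the crucial obstacle --- the triangle-inequality constraints among all pairs of levels mean that the branching sets of a Kat\v{e}tov-enumeration tree are not independent, so Milliken's theorem or Halpern--L\"auchli cannot be applied off the shelf --- but it does not overcome that obstacle; it only names places where you ``expect'' a workaround to live. That is precisely where the paper says known methods break down: free-amalgamation partition theory, the existing framework closest in spirit to a tree-Ramsey argument, applies to $\Ur_m$ only for $m\leqslant 3$, and for $m>3$ the metric constraints force essentially new arguments. There is also a concrete error in the proposed descent: the set of points at distance $m$ from a fixed base point in $\Ur_m$ is \emph{not} an $\Ur_{m-1}$-like substructure. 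It is isometric to $\Ur_m$ itself --- for a Kat\v{e}tov map $f$ with constant value $k$ on a single point, the orbit is isometric to $\Ur_{\min(2k,p)}$, which equals $\Ur_m$ when $k=m=p$ --- so peeling off the distance-$m$ sphere does not lower $m$. The smaller spaces $\Ur_n$ with $n<m$ arise from orbits of Kat\v{e}tov maps with \emph{small} minimum value, not from the sphere of radius $m$.

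The paper's actual proof has a different shape and uses no tree Ramsey theorem. It introduces a poset $\mathbb{P}$ of pairs $(f,\m{C})$ with $\m{C}\cong\Ur_p$ and $f$ a Kat\v{e}tov map with finite domain inside $\m{C}$, together with a recursively defined notion of a color class $\Gamma$ being \emph{large} relative to such a pair, the recursion driven by $\min f$ rather than by $m$. Two lemmas carry the argument: Lemma~\ref{lem:psi} gives a dichotomy (if $\Gamma$ is not large relative to $s$, then the complement of $\Gamma$ is large relative to some refinement $t\leqslant_0 s$), and Lemma~\ref{thm:orbit} shows that largeness of $\Gamma$ relative to $s$ lets one thin $\m{C}_s$ to a copy $\m{C}$ of $\Ur_p$ with $O(f_s,\m{C})\subset\Gamma$; choosing $\min f_s=m=p$ makes $O(f_s,\m{C})$ isometric to $\Ur_m$ and finishes the proof. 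The genuinely hard step, Lemma~\ref{lem:ind2} inside the proof of Lemma~\ref{thm:orbit}, handles the triangle-inequality constraints by extending Kat\v{e}tov maps one value at a time, passing to a $\leqslant_1$-refinement to drop the minimum by one and invoke the inductive hypothesis, and then using the reduction Lemma~\ref{lem:red} to identify two realizations of the same one-point extension type --- a mechanism with no counterpart in the product-Ramsey or Milliken framework you had in mind. As written, your proposal leaves the central difficulty unresolved, so it does not constitute a proof.
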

 
Theorem \ref{thm:U_m indiv} expands the list of already known partition results of so-called countable ultrahomogeneous relational structures. Those structures appeared in the late fifties thanks to the pioneering work of Fra\"iss\'e \cite{Fr} and have since been studied from various points of view. This led in particular to several deep combinatorial classification results (see Lachlan-Woodrow \cite{LW} for graphs, Schmerl \cite{Sch} for partially ordered sets, Cherlin \cite{Ch} for directed graphs, or more recently Gray-Macpherson \cite{GMc} for connected-ultrahomogeneous graphs) but also to substantial developments in permutation group theory (e.g. Cameron \cite{Cam}, Truss \cite{Tr}), logic (e.g. Pouzet-Roux \cite{Pou}), or Ramsey theory (initiated by Komj\'ath-R\"odl \cite{KR}). However, although our paper really belongs to combinatorics, several consequences of Theorem \ref{thm:s mos} related to functional analysis deserve to be mentioned. They are based on the following fact, which is easily seen to be equivalent to Theorem \ref{thm:s mos}: 

\begin{thm}

\label{thm:univ indiv}

Let $\m{X}$ be a separable metric space with finite diameter $\delta$. Assume that every separable metric space with diameter less or equal to $\delta$ embeds isometrically into $\m{X}$. Then $\m{X}$ is approximately indivisible.  

\end{thm}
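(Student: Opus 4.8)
The plan is to derive Theorem~\ref{thm:univ indiv} from Theorem~\ref{thm:s mos}, using the Urysohn sphere $\s$ as an intermediary (conversely, applying Theorem~\ref{thm:univ indiv} to $\m{X}=\s$ recovers Theorem~\ref{thm:s mos}, which is why the two statements are equivalent). First I would rescale the metric of $\m{X}$ by the factor $1/\delta$, so that without loss of generality $\delta=1$: thus $\m{X}$ is a separable metric space of diameter $1$ into which every separable metric space of diameter $\leqslant 1$ embeds isometrically. Two facts then follow at once from the defining property of $\s$. Since $\s$ is separable with diameter $1$, it embeds isometrically into $\m{X}$; fix such a copy and call it $\s_0$. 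And since $\m{X}$ is itself separable with diameter $\leqslant 1$, it embeds isometrically into $\s$, hence into every isometric copy of $\s$. Note that $\m{X}$ is neither assumed complete nor ultrahomogeneous, but only these two one-directional embedding facts will be used.

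Now fix a finite partition $\gamma$ of $\m{X}$ and some $\varepsilon>0$. Restricting $\gamma$ to $\s_0$ produces a finite partition $\{\Gamma\cap\s_0 : \Gamma\in\gamma\}$ of $\s_0$. Since $\s_0$ is isometric to $\s$, Theorem~\ref{thm:s mos} applies and yields $\Gamma\in\gamma$ together with a subspace $\s_1\subseteq\s_0$ isometric to $\s_0$ (hence to $\s$) such that $\s_1\subseteq(\Gamma\cap\s_0)_{\varepsilon}$, the $\varepsilon$-neighborhood being computed inside $\s_0$. Because $\s_0$ carries the metric inherited from $\m{X}$, one has $(\Gamma\cap\s_0)_{\varepsilon}\subseteq(\Gamma)_{\varepsilon}$, the latter neighborhood now computed in $\m{X}$; therefore $\s_1\subseteq(\Gamma)_{\varepsilon}$.

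Finally, by the second fact above, $\m{X}$ embeds isometrically into $\s_1$; choosing such a copy $\mc{X}\subseteq\s_1$ gives $\mc{X}\subseteq\s_1\subseteq(\Gamma)_{\varepsilon}$, which is precisely what $\varepsilon$-indivisibility of $\m{X}$ requires. Letting $\varepsilon$ range over the positive reals yields that $\m{X}$ is approximately indivisible. I do not anticipate a serious obstacle: all the combinatorial substance is already contained in Theorem~\ref{thm:s mos}, and the only points demanding a little care are the innocuous rescaling, keeping straight the two directions of universality, and the elementary remark that a neighborhood taken inside the metric subspace $\s_0$ is contained in the corresponding neighborhood taken in $\m{X}$.
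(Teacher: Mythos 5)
Your proposal is correct, and it matches the approach the paper implicitly has in mind: the authors state without proof that Theorem~\ref{thm:univ indiv} is ``easily seen to be equivalent'' to Theorem~\ref{thm:s mos}, and your argument is precisely the natural verification of that equivalence. The three observations you single out as needing care---rescaling preserves approximate indivisibility, the two directions of universality between $\m{X}$ and $\s$, and that an $\varepsilon$-neighborhood taken inside the metric subspace $\s_0$ sits inside the corresponding $\varepsilon$-neighborhood in $\m{X}$---are indeed the only points of substance, and you handle all of them correctly.
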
 

When applied to the unit sphere of certain remarkable Banach spaces, this theorem yields interesting consequences. For example, it is known that every separable metric space with diameter less or equal to $2$ embeds isometrically into the unit sphere $\mathbb{S}_{\mathcal{C}[0,1]}$ of the Banach space $\mathcal{C}[0,1]$. It follows that:

\begin{thm}

The unit sphere of $\mathcal{C}[0,1]$ is approximately indivisible. 

\end{thm}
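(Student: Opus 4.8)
The plan is to deduce this from Theorem~\ref{thm:univ indiv} applied to $\m{X} = \mathbb{S}_{\mathcal{C}[0,1]}$ with $\delta = 2$. Two of the three hypotheses are immediate. First, $\mathbb{S}_{\mathcal{C}[0,1]}$ is a separable metric space, being a subset of the separable Banach space $\mathcal{C}[0,1]$ equipped with the induced metric. Second, it has finite diameter equal to $2$: the triangle inequality gives $\|x-y\| \leqslant \|x\| + \|y\| = 2$ for all $x,y \in \mathbb{S}_{\mathcal{C}[0,1]}$, while the two constant functions $1$ and $-1$ show that this bound is attained. So the only real content is to check that every separable metric space of diameter at most $2$ embeds isometrically into $\mathbb{S}_{\mathcal{C}[0,1]}$.

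For this I would invoke the classical Banach--Mazur theorem: every separable metric space embeds isometrically into $\mathcal{C}[0,1]$ (one first embeds it isometrically into a separable Banach space by a Kuratowski-type embedding, and then embeds that Banach space linearly and isometrically into $\mathcal{C}[0,1]$). Let $\m{Y} = (Y, d^{\m{Y}})$ be a separable metric space with $\mathrm{diam}(\m{Y}) \leqslant 2$. The key step is to pass to the one-point extension $\m{Y}' = Y \cup \{\ast\}$, where $\ast \notin Y$ and $d^{\m{Y}'}(\ast, y) = 1$ for every $y \in Y$. One checks directly that $\m{Y}'$ is again a metric space: the only triangle inequalities to verify involve $\ast$ together with two points $y_1,y_2 \in Y$, and they reduce to $d^{\m{Y}}(y_1,y_2) \leqslant 2$ and $1 \leqslant d^{\m{Y}}(y_1,y_2) + 1$, both of which hold because $\mathrm{diam}(\m{Y}) \leqslant 2$; moreover $\m{Y}'$ is still separable.

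Now apply Banach--Mazur to $\m{Y}'$ to obtain an isometric embedding $\varphi \colon \m{Y}' \longrightarrow \mathcal{C}[0,1]$, and replace $\varphi$ by $\varphi - \varphi(\ast)$ so that we may assume $\varphi(\ast) = 0$. Then for every $y \in Y$,
\[
\|\varphi(y)\| = \|\varphi(y) - \varphi(\ast)\| = d^{\m{Y}'}(y, \ast) = 1,
\]
so $\varphi(y) \in \mathbb{S}_{\mathcal{C}[0,1]}$. Hence $\restrict{\varphi}{Y}$ is an isometric embedding of $\m{Y}$ into $\mathbb{S}_{\mathcal{C}[0,1]}$, which establishes the required universality property, and Theorem~\ref{thm:univ indiv} then yields that $\mathbb{S}_{\mathcal{C}[0,1]}$ is approximately indivisible.

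I do not expect any serious obstacle: the argument is pure bookkeeping on top of Theorem~\ref{thm:univ indiv} and the Banach--Mazur theorem. The only point that genuinely requires attention is the one-point extension, whose sole purpose is to turn the constraint ``$\mathrm{diam} \leqslant 2$'' into the statement ``all points lie at distance $1$ from a common auxiliary point'', after which translating that auxiliary point to the origin of $\mathcal{C}[0,1]$ places the original space on the unit sphere. (The same scheme applies verbatim to the unit sphere of any infinite-dimensional separable Banach space into which every separable Banach space embeds isometrically, which is why Theorem~\ref{thm:univ indiv} produces such consequences uniformly.)
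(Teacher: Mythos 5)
Your proposal is correct and follows the same route as the paper: deduce the result from Theorem~\ref{thm:univ indiv} once one knows that every separable metric space of diameter at most $2$ embeds isometrically into $\mathbb{S}_{\mathcal{C}[0,1]}$. The paper merely cites this universality of the sphere as known, whereas you supply the standard proof (one-point extension at distance $1$, Kuratowski embedding into a separable Banach space, Banach--Mazur into $\mathcal{C}[0,1]$, then translate the auxiliary point to the origin), which is exactly the intended argument.
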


On the other hand, it is also known that $\mathcal{C}[0,1]$ is not the only space having a unit sphere satisfying the hypotheses of Theorem \ref{thm:univ indiv}. For example, Holmes proved in \cite{H} that there is a Banach space $\m{H}$ such that for every isometry $i : \funct{\Ur}{\m{Y}}$ of the Urysohn space $\Ur$ into a Banach space $\m{Y}$ with $0_{\m{Y}}$ is in the range of $i$, there is an isometric isomorphism between $\m{H}$ and the closed linear span of $i\left(\Ur\right)$ in $\m{Y}$. Very little is known about the space $\m{H}$, but it is easy to see that its unit sphere contains isometrically every separable metric space with diameter less or equal to $2$. Therefore:  

\begin{thm}

The unit sphere of the Holmes space is approximately indivisible. 

\end{thm}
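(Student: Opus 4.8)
The plan is to derive the statement directly from Theorem \ref{thm:univ indiv}. I would apply that theorem with $\m{X}$ the unit sphere $\mathbb{S}_{\m{H}}$ of the Holmes space $\m{H}$ and with $\delta = 2$. Since $\|x - y\| \leqslant \|x\| + \|y\| = 2$ whenever $x,y$ lie on the unit sphere of a Banach space, $\mathbb{S}_{\m{H}}$ has finite diameter (in fact equal to $2$, as $\m{H}$ is infinite-dimensional), and $\mathbb{S}_{\m{H}}$ is separable because $\m{H}$ is ($\m{H}$ being the closed linear span of an isometric copy of the separable space $\Ur$). Hence the only thing to check is the universality hypothesis of Theorem \ref{thm:univ indiv}: every separable metric space with diameter at most $2$ should embed isometrically into $\mathbb{S}_{\m{H}}$.

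So let $\m{M} = (M, d^{\m{M}})$ be a separable metric space with $\mathrm{diam}(\m{M}) \leqslant 2$. First I would enlarge $\m{M}$ to a metric space $\m{M}'$ by adjoining one extra point $\ast \notin M$ and setting $d(\ast, x) = 1$ for every $x \in M$. The triangle inequality is immediate: for $x,y \in M$ one has $d^{\m{M}}(x,y) \leqslant 2 = d(\ast,x) + d(\ast,y)$ by the diameter assumption, while $|d(\ast,x) - d(\ast,y)| = 0 \leqslant d^{\m{M}}(x,y)$. Thus $\m{M}'$ is a separable metric space, still of diameter at most $2$, in which $\ast$ is at distance exactly $1$ from every point of $\m{M}$.

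Next, embed $\m{M}'$ isometrically into the Urysohn space $\Ur$ (possible because $\Ur$ is universal for separable metric spaces), and let $p \in \Ur$ be the image of $\ast$. Now embed $\Ur$ isometrically into a Banach space $\m{Y}$ so that the image of $p$ is $0_{\m{Y}}$; this can be arranged, for instance, by using the Kuratowski embedding $\funct{\Ur}{C_b(\Ur)}$ based at $p$, or by composing any isometric embedding of $\Ur$ into a Banach space with a translation of that space (translations being surjective isometries). By Holmes' theorem, as recalled just before the statement, the closed linear span $\m{V}$ of the image of $\Ur$ in $\m{Y}$ is isometrically isomorphic to $\m{H}$; fix an isometric isomorphism $\varphi : \funct{\m{V}}{\m{H}}$. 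For $x \in \Ur$ lying in the image of $M$, its image in $\m{Y}$ has norm $d^{\Ur}(x,p) = 1$, and $\varphi$ preserves norms and distances, so $\varphi$ carries the image of $M$ onto a subset of $\mathbb{S}_{\m{H}}$ isometric to $\m{M}$. This verifies the universality hypothesis, and Theorem \ref{thm:univ indiv} then yields the approximate indivisibility of $\mathbb{S}_{\m{H}}$.

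The argument has no serious obstacle: it is just the "easy to see" remark quoted before the statement, spelled out. The only two points deserving a moment's care are the verification that adjoining $\ast$ at distance $1$ keeps the triangle inequality valid — which is precisely where $\mathrm{diam}(\m{M}) \leqslant 2$ is used — and the normalization allowing the prescribed point $p \in \Ur$ to be sent to $0_{\m{Y}}$ before invoking Holmes' result; both are routine.
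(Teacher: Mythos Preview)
Your proposal is correct and follows exactly the route the paper indicates: apply Theorem~\ref{thm:univ indiv} with $\m{X}=\mathbb{S}_{\m{H}}$ and $\delta=2$, the only nontrivial hypothesis being the universality of $\mathbb{S}_{\m{H}}$ for separable metric spaces of diameter at most $2$, which you establish by the one-point-extension-at-distance-$1$ trick combined with Holmes' theorem. This is precisely the ``easy to see'' remark the paper makes before the statement, spelled out in full.
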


Observe that these result do \emph{not} say that for $\m{X} = \mathcal{C}[0,1]$ or $\m{H}$, every finite partition $\gamma$ of the unit sphere $\mathbb{S}_{\m{X}}$ of $\m{X}$ and every $\varepsilon > 0$, there is $\Gamma \in \gamma$ and a closed infinite dimensional subspace $\m{Y}$ of $\m{X}$ such that $\mathbb{S}_{\m{X}} \cap \m{Y} \subset (\Gamma)_{\varepsilon}$: according to the classical results about oscillation stability in Banach spaces, this latter fact is false for those Banach spaces into which every separable Banach space embeds linearly, and it is known that both $\mathcal{C}[0,1]$ and $\m{H}$ have this property.

The paper is organized as follows. Section \ref{section:partition} corresponds to a short presentation of the partition theory of countable ultrahomogeneous structures with free amalgamation. In section \ref{section:notations}, the essential ingredients, the main technical results (Lemma \ref{thm:orbit} and Lemma \ref{lem:psi}) as well as the general outline of the proof of Theorem \ref{thm:U_m indiv} are presented. Finally, the proof of Lemma \ref{thm:orbit} is presented in section \ref{section:indiv}, while section \ref{section:appendix} presents a brief history of the problem of approximate indivisibility of $\s$ together with an outline of the proof of the aforementioned result of Lopez-Abad and the first author. 

\

\textbf{Acknowledgements.} L. Nguyen Van Th\'e would like to acknowledge the support of the Department of Mathematics \& Statistics Postdoctoral Program at the University of Calgary. N. W. Sauer was supported by NSERC of Canada Grant \# 691325. We would also like to thank Jordi Lopez-Abad, Vitali Milman, Stevo Todorcevic, the members of the Equipe de Logique set theory group at the University of Paris 7 and the anonymous referee for the considerable improvements their helpful comments and suggestions brought to the paper.

\section{Partition theory of countable ultrahomogeneous structures with free amalgamation.}

\label{section:partition}

In this section, we present a brief history of the general theory of indivisibility of countable ultrahomogeneous relational structures. For the undefined notions and for a general introduction to the partition 
theory of countable ultrahomogeneous structures see \cite{S2}. As mentioned in the introduction, partition theory is one of the aspects under which countable ultrahomogeneous relational structures were traditionally studied. The paper \cite{KR} quickly followed by \cite{EZS1} initiates a series devoted to this field, and more precisely devoted to vertex partition results of countable ultrahomogeneous structures with free amalgamation (The partition theory for sets of substructures other than vertices is much more complicated, see \cite{LSV} and \cite{S5}). In \cite{EZS2} it is proven that if a countable ultrahomogeneous structure is indivisible then the stabilizers of finite subsets form a chain, which in the binary case is a chain under embedding. This then led to \cite{S3} in which it is shown, in the case of directed graphs, that if the partial order of the stabilizers is finite then the Ramsey degree is equal to the size of 
its maximal antichain. In \cite{EZS3} the finiteness condition was removed in the case that the partial order is a chain. \cite{S4} contains
the most general result from which it follows that the Ramsey degree of  a binary countable ultrahomogeneous structure with free 
amalgamation  is equal to the size of the maximal antichain of the partial order of finite set stabilizers under
embedding if this partial order is finite. Hence if this partial order is a chain then the ultrahomogeneous structure is 
indivisible. 

For metric spaces, this global theory does not apply as amalgamation is in general not free. Still, it allows to capture the most elementary cases and manages to give a hint of what the general result should be. Indeed, it is easy to see that the partial order of stabilizers of finite subsets forms a chain under isometric embedding. Moreover, if $m\leqslant 3$, then it can be noticed that $\Ur _m$ has free amalgamation. Hence if $m\leqslant 3$ then $\Ur _m$ is indivisible, a result which allowed to prove that $\s$ is $1/6$-indivisible in \cite{LANVT}. 

However, if $m>3$, then the situation changes drastically and requires the introduction of essentially new arguments to prove that the metric spaces $\Ur _m$ are indivisible. The presentation of those arguments is the purpose of the present paper.

\section{Notations and definitions.}

\label{section:notations}

In this section, we present the notions and objects that will play a central role throughout the paper. 

\subsection{Kat\v{e}tov maps and orbits.} \label{subsection:katetov maps and orbits} Given a metric space $\m{X} = (X, d^{\m{X}})$, a map $f~:~\funct{X}{]0,+\infty[}$ is \emph{Kat\v{e}tov over $\m{X}$} when \[\forall x, y \in X, \ \ |f(x) - f(y)| \leqslant d^{\m{X}} (x,y) \leqslant f(x) + f(y).\] 

Equivalently, one can extend the metric $d^{\m{X}}$ to $X \cup \{ f \}$ by defining, for every $x, y$ in $X$, $ \widehat{d^{\m{X}}} (x, f) = f(x)$ and $\widehat{d^{\m{X}}} (x, y) = d^{\m{X}} (x, y)$. The corresponding metric space is then written $\m{X} \cup \{ f
\}$. The set of all Kat\v{e}tov maps over $\m{X}$ is written $E(\m{X})$. For a metric subspace $\m{X}$ of $\m{Y}$, a Kat\v{e}tov map $f \in E(\m{X})$ and a point $y \in \m{Y}$, then $y$ \emph{realizes $f$ over $\m{X}$} if \[ \forall x \in \m{X} \ \ d^{\m{Y}}(y,x) = f(x). \] 

The set of all $y \in \m{Y}$ realizing $f$ over $\m{X}$ is then written $O(f,\m{Y})$ and is called the \emph{orbit of $f$ in $\m{Y}$}. When $\m{Y}$ is implied by  context, the set $O(f,\m{Y})$ is simply written $O(f)$. Here, the concepts of Kat\v{e}tov map and orbit are relevant because of the following standard reformulation of the notion of ultrahomogeneity, which will be used extensively in the sequel: 

\begin{lemma}
\label{prop:extension} Let $\m{X}$ be a countable metric space. Then $\m{X}$ is ultrahomogeneous
iff for every finite subspace  $\m{F} \subset \m{X}$ and every Kat\v{e}tov map $f$ over $\m{F}$, if
$\m{F} \cup \{ f \}$ embeds into $\m{X}$, then $O(f, \m{X}) \neq \emptyset$.
\end{lemma}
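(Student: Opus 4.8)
The plan is to prove both directions of the equivalence, treating the ``only if'' direction as essentially immediate and spending the real effort on the ``if'' direction, where one must build a global isometry out of partial information.

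\emph{The easy direction.} Suppose $\m{X}$ is ultrahomogeneous, let $\m{F}\subset\m{X}$ be finite, let $f$ be a Kat\v{e}tov map over $\m{F}$, and assume there is an isometric embedding $\varphi:\funct{\m{F}\cup\{f\}}{\m{X}}$. Write $\m{G}=\varphi(\m{F})$ and $z=\varphi(f)\in\m{X}$. Then $\varphi\restrict{\m{F}}{}$ is an isometry between the finite subspaces $\m{F}$ and $\m{G}$ of $\m{X}$, so by ultrahomogeneity it extends to a surjective isometry $g$ of $\m{X}$. Set $y=g^{-1}(z)$. For every $x\in\m{F}$ we have $d^{\m{X}}(y,x)=d^{\m{X}}(g^{-1}(z),x)=d^{\m{X}}(z,g(x))=d^{\m{X}}(z,\varphi(x))=f(x)$, the last equality because $\varphi$ is isometric on $\m{F}\cup\{f\}$. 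Hence $y\in O(f,\m{X})$, so the orbit is nonempty.

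\emph{The hard direction.} Conversely, assume the orbit condition and let $i:\funct{\m{A}}{\m{B}}$ be an isometry between finite subspaces $\m{A},\m{B}\subset\m{X}$; the goal is to extend it to a surjective isometry of $\m{X}$. Since $\m{X}$ is countable, I would fix an enumeration $\m{X}=\{x_n : n\in\N\}$ and perform a standard back-and-forth construction, building an increasing chain of finite partial isometries $i=i_0\subseteq i_1\subseteq i_2\subseteq\cdots$ with $x_n\in\dom(i_{2n+1})$ and $x_n\in\ran(i_{2n+2})$. The union $\bigcup_k i_k$ is then the desired surjective isometry. The crux is the one-step extension (the ``forth'' step; ``back'' is symmetric): given a finite partial isometry $i_k$ with domain $\m{F}\subset\m{X}$ and range $\m{G}\subset\m{X}$, and a point $x\in\m{X}\setminus\m{F}$, I must find $y\in\m{X}$ so that $i_k\cup\{(x,y)\}$ is still an isometry. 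Define $f:\funct{\m{G}}{]0,+\infty[}$ by $f(i_k(u))=d^{\m{X}}(x,u)$ for $u\in\m{F}$; since $d^{\m{X}}(x,u)>0$ as $x\notin\m{F}$, and since $i_k$ is an isometry, the triangle inequalities defining the Kat\v{e}tov condition for $f$ over $\m{G}$ are exactly the triangle inequalities in $\m{X}$ among $x$ and the points of $\m{F}$, so $f\in E(\m{G})$. Moreover the map sending each $u\in\m{F}$ to $i_k(u)$ and sending $x$ to the formal point $f$ is an isometry of $\m{F}\cup\{x\}$ onto $\m{G}\cup\{f\}$, and $\m{F}\cup\{x\}\subset\m{X}$, so $\m{G}\cup\{f\}$ embeds isometrically into $\m{X}$. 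The hypothesis now gives $y\in O(f,\m{X})$, i.e. $d^{\m{X}}(y,i_k(u))=f(i_k(u))=d^{\m{X}}(x,u)$ for all $u\in\m{F}$, which is precisely what is needed for $i_k\cup\{(x,y)\}$ to be an isometry.

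The main obstacle, such as it is, is purely bookkeeping: one must be careful that the Kat\v{e}tov condition over $\m{G}$ transfers correctly (using that $i_k$ is an isometry, and that one may need the degenerate case $\m{F}=\emptyset$, where $f$ is an arbitrary constant and the embedding hypothesis still supplies a witness since $\m{X}$ is nonempty), and that the back-and-forth enumeration genuinely exhausts $\m{X}$ on both sides so that the resulting isometry is onto. No step presents a genuine difficulty once the translation between ``realizing a Kat\v{e}tov map'' and ``extending a partial isometry by one point'' is made explicit.
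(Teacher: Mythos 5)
Your proof is correct and is the standard back-and-forth argument; the paper itself omits the proof and defers to \cite{Fr} and \cite{NVT}, where essentially this same argument appears. One small imprecision: in the degenerate case $\m{F}=\emptyset$, the Kat\v{e}tov map $f$ has empty domain (not ``an arbitrary constant''), so any point of the nonempty space $\m{X}$ realizes it vacuously — but this does not affect the validity of the argument.
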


For a proof of that fact in the general context of relational structures, see for example \cite{Fr}. For a proof in the particular context of metric spaces, see \cite{NVT}.

Throughout the paper, we will extensively use the result of Lemma \ref{prop:extension} when $\m{X} = \Ur _p$, where $p \geqslant 1$ is an integer. Recall that the space $\Ur _p$ is defined as follows: it is a countable, ultrahomogeneous metric space with distances in $\{1, \ldots, p\}$, and every countable metric space with distances in $\{1, \ldots, p \}$ embeds isometrically. Moreover, it can be proved that any two countable ultrahomogeneous metric spaces with the same finite metric subspaces are isometric (again, this is a standard fact in the context of countable ultrahomogeneous relational structures, see \cite{Fr} for a general proof or \cite{NVT} for the case of metric spaces). Therefore, the aforementioned properties completely characterize $\Ur _p$ up to isometry.  

There are several ways to look at $\Ur _p$. For example, it might be seen as a very simplified version (as Vladimir Pestov would say, a "poor man's version") of the Urysohn space $\Ur$ already mentioned in the introduction. The space $\Ur$ is, up to isometry, the unique complete separable ultrahomogeneous metric space that is also \emph{universal} for the class of all separable metric spaces (into which any separable metric space embeds isometrically). The space $\Ur$ was constructed by Urysohn in \cite{U} whose goal was precisely to prove the existence of a universal separable metric space, and there are nowadays several known characterizations and constructions of $\Ur$. For more information about it and its corresponding recent research developments, the reader should refer to the volume \cite{Pr}. In the present article however, it is more important to think of the space $\Ur _p$ as a discretized version of the Urysohn sphere $\s$ (after having replaced the metric $d^{\Ur _p}$ by $d^{\Ur _p}/p$) whose indivisibility properties capture the oscillation stability of $\s$, see \cite{LANVT} for the details, or section \ref{section:appendix} of the present paper for the main ideas.  

\

\textbf{Remark:} The notion of  Kat\v{e}tov function has become standard in the   Urysohn space literature because of the construction of $\Ur$ by Kat\v{e}tov in \cite{K}, often considered as the starting point of the present research about $\Ur$. They appeared prominently in several very different contributions to the field, see for example Cameron-Vershik \cite{CV}, Melleray \cite{Me1}, Pestov \cite{Pe0}, Uspenskij \cite{Us2}, or Vershik \cite{Ve2}. However, the idea of Kat\v{e}tov function already appears in the original article \cite{U} by Urysohn and is undoubtedly in the spirit of the constructions provided by Fra\"iss\'e in \cite{Fr}. Very likely, as examplified by the referee or by Maurice Pouzet, we are unaware of many other uses of those objects made by other authors, e.g. Isbell \cite{I} or Flood \cite{Fl1}, \cite{Fl2}.

\

\subsection{A notion of largeness.} 

In this section, $p$ is a fixed strictly positive integer. For metric spaces $\m{X}$, $\m{Y}$ and $\m{Z}$, write $\m{X} \cong \m{Y}$ if there is an isometry from $\m{X}$ onto $\m{Y}$ and define the set $\binom{\m{Z}}{\m{X}}$ as 
\[ \binom{\m{Z}}{\m{X}} = \{ \mc{X} \subset \m{Z} : \mc{X} \cong \m{X} \}. \] 

\begin{defn}

\label{defn:P}

The set $\mathbb{P}$ is the set of all ordered pairs of the form $s = (f_s,\m{C}_s)$ where 
\begin{enumerate}
\item $\m{C}_s \in \binom{\Ur _p}{\Ur _p}$.
\item $f_s$ is a map with finite domain $\dom f_s \subset \m{C}_s$ and with values in $\{ 1,\ldots , p\}$.
\item $f_s \in E(\dom f_s)$, ie $f_s$ is Kat\v{e}tov on its domain.
\end{enumerate}
The set $\mathbb{P}$ is partially ordered by the relation $\leqslant$ defined by \[ \forall s, t \in \mathbb{P} \ \ t \leqslant s \leftrightarrow \left(\dom f_s \subset \dom f_t \subset \m{C}_t \subset \m{C}_s \ \ \mathrm{and}  \ \ \restrict{f_t}{\dom f_s} = f_s\right). \] 
Finally, if $k \in \N$, then $t \leqslant _k s $ stands for \[ t \leqslant s \ \ \mathrm{and} \ \ \min f_t = \left \{ \begin{array}{cl} 
 \min f_s - k & \textrm{if $\min f_s > k$,} \\
 1 & \textrm{otherwise.}
 \end{array} \right.\]

\end{defn}

Observe that if $s \in \mathbb{P}$, then the ultrahomogeneity of $\Ur _p$ ensures that the set $O(f_s , \m{C} _s)$ is not empty and isometric to $\Ur _n$ where $n = \min (2\min f_s , p)$ (indeed, $O(f_s , \m{C} _s)$ is countable ultrahomogeneous with distances in $\{1,\ldots ,n \}$ and embeds every countable metric space with distances in $\{1,\ldots ,n \}$). Observe also that there is always a $t \in \mathbb{P}$ such that $t \leqslant _1 s$. Observe finally that unlike the relations $\leqslant$ and $\leqslant _0$, the relation $\leqslant _k$ is not transitive when $k>0$.

\begin{defn}

\label{defn:psi}

Let $s \in \mathbb{P}$ and $\Gamma \subset \Ur _p$. The notion of \emph{largeness of $\Gamma$ relative to $s$} is defined recursively as follows:

If $\min f_s = 1$, then $\Gamma$ is large relative to $s$ iff \[ \forall t \leqslant _0 s  \ \left(O(f_t , \m{C}_t) \cap  \Gamma \ \textrm{is infinite}\right).\]

If $\min f_s > 1$, then $\Gamma$ is large relative to $s$ iff \[ \forall t \leqslant _0 s  \ \ \exists u \leqslant _1 t \ \ \left(\textrm{$\Gamma$ is large relative to $u$}\right) .\]

\end{defn}

The idea behind the definition of largeness is that if $\Gamma$ is large relative to $s$, then inside $\m{C}_s$ the set $\Gamma$ should represent a substantial part of the orbit of $f_s$. This intuition is made precise by the following Lemma: 

\begin{lemma}

\label{thm:orbit}

Let $s \in \mathbb{P}$. Assume that $\Gamma$ is large relative to $s$. Then there exists an isometric copy $\m{C}$ of $\Ur _p$ inside $\m{C}_s$ such that:
\begin{enumerate} 
\item $\dom f_s \subset \m{C}$.
\item $O(f_s,\m{C}) \subset \Gamma$.  
\end{enumerate}  
\end{lemma}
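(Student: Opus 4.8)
The plan is to build the copy $\m{C}$ of $\Ur _p$ inside $\m{C}_s$ by a back-and-forth construction, enumerating a list of ``tasks'' to be satisfied one at a time while maintaining, at every finite stage, a pair in $\mathbb{P}$ below $s$ whose orbit we already know meets $\Gamma$ in a controlled way. Concretely, I would proceed by induction on $\min f_s$. In the base case $\min f_s = 1$, largeness says exactly that for every $t \leqslant_0 s$ the set $O(f_t,\m{C}_t)\cap\Gamma$ is infinite; since $O(f_s,\m{C}_s)$ is isometric to some $\Ur_n$ and $\Ur_n$ is itself a countable ultrahomogeneous universal space, I would run a standard Fra\"{\i}ss\'e-style back-and-forth inside $O(f_s,\m{C}_s)$ to build a copy of $\Ur_n$ all of whose points lie in $\Gamma$: whenever the construction demands a point realizing a prescribed Kat\v{e}tov type over the finite part built so far, I extend $f_s$ to a map $f_t$ coding that type (so $t\leqslant_0 s$), and then largeness guarantees infinitely many realizers in $\Gamma$, so I may pick a fresh one. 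Adjoining $\dom f_s$ gives the desired $\m{C}$.

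For the inductive step $\min f_s > 1$, the point is that ``largeness relative to $s$'' delegates, for each $t\leqslant_0 s$, to some $u\leqslant_1 t$ with $\Gamma$ large relative to $u$, and such a $u$ has strictly smaller $\min f_u$ (either $\min f_s - 1$ or $1$), so the induction hypothesis applies to $u$. The construction again goes through a back-and-forth over $O(f_s,\m{C}_s)\cong\Ur_n$ with $n=\min(2\min f_s,p)$. At a stage where I have a finite configuration and need a point of $\Gamma$ realizing a given Kat\v{e}tov map over it, I first record that configuration as an extension $f_t$ of $f_s$ with $t\leqslant_0 s$; largeness hands me $u\leqslant_1 t$ with $\Gamma$ large relative to $u$; the induction hypothesis produces a copy $\m{C}'$ of $\Ur_p$ in $\m{C}_u$ with $\dom f_u\subset\m{C}'$ and $O(f_u,\m{C}')\subset\Gamma$; and since $O(f_u,\m{C}')$ is a copy of $\Ur_{n'}$ with $n'$ large enough (because $\min f_u\geqslant 1$, its orbit still has enough distances), I can locate in it a point realizing the required type over my current configuration, and that point is automatically in $\Gamma$. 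Taking the union of the increasing chain of finite configurations and verifying, via Lemma \ref{prop:extension}, that it is ultrahomogeneous with the right finite subspaces yields $\m{C}\cong\Ur_p$ with the two required properties.

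The main obstacle, and the step I would spend the most care on, is bookkeeping the interaction between the two relations $\leqslant_0$ and $\leqslant_1$: the relation $\leqslant_k$ is not transitive for $k>0$, so one cannot simply iterate ``drop the minimum by $1$'' freely, and one must arrange the enumeration of tasks so that each time largeness is invoked it is invoked relative to the \emph{current} pair (which is always $\leqslant_0$ the original $s$, not some deeper pair), and then the single controlled step down to a $\leqslant_1$-extension is absorbed by the induction hypothesis. A second delicate point is checking that the orbits $O(f_u,\m{C}')$ into which we dive always carry enough distinct distance values to realize whatever Kat\v{e}tov map the back-and-forth currently demands over the finite piece constructed so far; this is where the observation that $O(f_s,\m{C}_s)\cong\Ur_{\min(2\min f_s,p)}$ is used repeatedly, and one has to make sure the Kat\v{e}tov maps arising in the construction over $\m{C}_s$ restrict compatibly to Kat\v{e}tov maps over these orbits. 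Once the enumeration scheme is fixed, the individual extension steps are routine applications of Lemma \ref{prop:extension} and the definition of largeness.
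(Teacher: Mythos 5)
Your outline gets the large-scale strategy right (induction on $\min f_s$, a back-and-forth over $\m{C}_s$, the observation that $\leqslant_1$ is not transitive), and your base case $\mathcal{J}_1$ is essentially the paper's, modulo the imprecision that one must build the whole copy $\m{C}\cong\Ur _p$ rather than a copy of $\Ur _n$ inside $O(f_s,\m{C}_s)$, so the enumeration has to run over all of $\m{C}_s$ while forcing only the orbit images into $\Gamma$ (adjoining $\dom f_s$ to a copy of $\Ur _n$ does not yield a copy of $\Ur _p$).

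The inductive step, however, has a genuine gap that the paper's machinery is specifically designed to close. The difficulty arises when the back-and-forth demands a new orbit point $x^*$ whose Kat\v{e}tov type $h$ over the current finite configuration $\m{X}\supset F$ has $\min h<m$ (for instance $x^*$ must lie at distance $1$ from an earlier configuration point while still realizing $f_s$ over $F$). Your move ``record the type as $f_t$ with $t\leqslant_0 s$'' then fails: $t\leqslant_0 s$ forces $\min f_t=m$, so no such $f_t$ can agree with $h$. Moreover, the point your induction hypothesis hands you lies in $O(f_u,\m{C}')$, hence is at the \emph{fixed} distances prescribed by $f_u$ from every point of $\dom f_u\supseteq\m{X}$; a single orbit cannot realize an arbitrary prescribed type $h$ over $\m{X}$, so ``locate in it a point realizing the required type over my current configuration'' is not available.

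The paper circumvents this by never invoking largeness on demand for types of small minimum; instead it maintains as a standing invariant that
\[
\left(\m{X}\right)_{m-1}\cap O(f,\m{A})\subset\Gamma
\]
(condition (ii) of Lemma \ref{lem:ind1}), i.e.\ every point of the current ambient copy realizing $f$ and lying within distance $m-1$ of the configuration is already in $\Gamma$, so the types $h$ with $\min h<m$ are handled in advance rather than when they show up. Re-establishing this invariant after adding $x^*$ is the heart of the matter: one enumerates the finite set $K$ of one-point Kat\v{e}tov types over $\m{X}\cup\{h\}$ with value $\leqslant m-1$ at $h$, and for each $\phi\in K$ performs (Lemma \ref{lem:ind2}) the delicate step of first perturbing $\phi$ to $s_1$ by raising the distance to $x^*$ by one, dropping to a $\leqslant_1$-pair via largeness, constructing a further extension $s_3$, and applying the \emph{strengthened} induction hypothesis $\mathcal{H}_{\min s}$ rather than $\mathcal{J}_{\min s}$; passing from $\mathcal{J}$ to $\mathcal{H}$, and two further steps inside Lemma \ref{lem:ind2}, rely on the gluing Lemma \ref{lem:red}, which your plan does not mention at all. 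Without the standing invariant, the $K$-indexed bookkeeping, and the $\mathcal{H}_m$ reformulation, the back-and-forth gets stuck exactly when it needs a point both close to $\m{X}$ and realizing $f_s$, which is precisely the situation distinguishing $m>1$ from the base case.
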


In words, Lemma \ref{thm:orbit} means that by thinning up $\m{C}_s$, it is possible to ensure that the whole orbit of $f_s$ is included in $\Gamma$. The requirement $\dom f_s \subset \m{C}$ guarantees that the orbit of $f_s$ in the new space has the same metric structure as the orbit of $f_s$ in the original space. The proof of Lemma \ref{thm:orbit} represents the core of the proof of Theorem \ref{thm:U_m indiv} and is detailed in section \ref{section:indiv}. The second crucial fact about $\mathbb{P}$ and largeness lies in:  

\begin{lemma}

\label{lem:psi}

Let $s \in \mathbb{P}$ be such that $\Gamma$ is not large relative to $s$. Then there is $t \leqslant _0 s$ such that $\Ur _p \smallsetminus \Gamma$ is large relative to $t$.  
\end{lemma}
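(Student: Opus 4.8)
The plan is to prove the contrapositive-style statement by induction on $\min f_s$, exploiting the recursive structure of the definition of largeness. The base case is $\min f_s = 1$. Here $\Gamma$ is not large relative to $s$ means there is some $t \leqslant_0 s$ with $O(f_t,\m{C}_t)\cap\Gamma$ finite. Since $O(f_t,\m{C}_t)$ is isometric to $\Ur_n$ with $n=\min(2\min f_t,p)=\min(2,p)$ and hence (for $p\geqslant 1$) infinite, the complement $O(f_t,\m{C}_t)\cap(\Ur_p\smallsetminus\Gamma)$ is cofinite in $O(f_t,\m{C}_t)$, in particular infinite. But I need this for \emph{every} $t'\leqslant_0 t$, not just for $t$ itself. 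The key point is that if $t'\leqslant_0 t\leqslant_0 s$ then $t'\leqslant_0 s$ as well (since $\leqslant_0$ is transitive and $\min f_{t'}=\min f_t=\min f_s=1$), and moreover $O(f_{t'},\m{C}_{t'})\subset O(f_t,\m{C}_t)$ because $f_{t'}$ extends $f_t$ and $\m{C}_{t'}\subset\m{C}_t$; so once $O(f_t,\m{C}_t)\cap\Gamma$ is finite, every sub-orbit also meets $\Gamma$ finitely, hence meets $\Ur_p\smallsetminus\Gamma$ infinitely. This shows $\Ur_p\smallsetminus\Gamma$ is large relative to $t$.

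For the inductive step, suppose $\min f_s > 1$ and $\Gamma$ is not large relative to $s$. Unwinding the definition, there is a witness $t\leqslant_0 s$ such that for \emph{all} $u\leqslant_1 t$, $\Gamma$ is not large relative to $u$. I claim this $t$ works: I must show $\Ur_p\smallsetminus\Gamma$ is large relative to $t$. Note $\min f_t=\min f_s>1$, so by the definition of largeness I need: for every $t'\leqslant_0 t$ there is $u\leqslant_1 t'$ with $\Ur_p\smallsetminus\Gamma$ large relative to $u$. Fix such a $t'$. Since $t'\leqslant_0 t$ and $\leqslant_0$ is transitive, $t'\leqslant_0 s$, and I want to transfer the failure of largeness at $t$ down to $t'$. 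This requires a monotonicity observation: if $\Gamma$ is not large relative to $t$ (with the specific strong witness $t$ chosen above), then $\Gamma$ is not large relative to $t'$ for any $t'\leqslant_0 t$ — indeed, any $u\leqslant_1 t'$ satisfies $u\leqslant t'\leqslant t$ hence (checking the $\min$ condition) $u\leqslant_1 t$, so $\Gamma$ is not large relative to $u$; thus $t'$ itself inherits the strong non-largeness witness, with witness $t'$. Now pick any $u\leqslant_1 t'$; then $\Gamma$ is not large relative to $u$, and since $\min f_u<\min f_s$ (as $\min f_u=\min f_{t'}-1$ if $\min f_{t'}>1$, or $=1$ otherwise, and $\min f_{t'}=\min f_t=\min f_s$), the induction hypothesis applies to $u$: there is $v\leqslant_0 u$ with $\Ur_p\smallsetminus\Gamma$ large relative to $v$. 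The final adjustment is to observe $v\leqslant_1 t'$ as well, since $v\leqslant u\leqslant_1 t'$ forces $v\leqslant t'$ and $\min f_v=\min f_u$ gives exactly the $\leqslant_1$ relation between $v$ and $t'$; so $v$ is the required witness $u\leqslant_1 t'$ in the definition of largeness of $\Ur_p\smallsetminus\Gamma$ relative to $t$.

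The main obstacle I anticipate is bookkeeping around the non-transitive relation $\leqslant_k$ for $k>0$ and making sure the $\min$-conditions are tracked correctly: I must repeatedly verify that composing a $\leqslant_0$ step with a $\leqslant_1$ step, or a $\leqslant_1$ step with a $\leqslant_0$ step, yields a genuine $\leqslant_1$ step, which hinges on the fact that $\leqslant_0$ preserves $\min f$ exactly. A secondary subtlety is the simultaneous induction on the single parameter $\min f_s$ and the implicit quantifier alternation in the definition of largeness; I would state cleanly at the outset the auxiliary monotonicity facts (that $\leqslant_0$ preserves $\min f$ and orbits shrink along $\leqslant$, and that non-largeness with a strong witness propagates downward along $\leqslant_0$) so that the induction itself reduces to routine substitution.
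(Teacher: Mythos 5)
Your proof is correct and follows essentially the same route as the paper's: induction on $\min f_s$, with the witness $t \leqslant_0 s$ extracted from the negated definition of largeness in both cases, and in the inductive step the same chain $t' \leqslant_0 t$, $u \leqslant_1 t'$ (giving $u \leqslant_1 t$), $v \leqslant_0 u$ (giving $v \leqslant_1 t'$) driven by the induction hypothesis. You merely spell out a few steps the paper leaves implicit, such as why $O(f_{t'},\m{C}_{t'}) \subset O(f_t,\m{C}_t)$ for $t' \leqslant_0 t$ in the base case, and your "monotonicity observation" is just a repackaging of the fact that $\leqslant_0$ followed by $\leqslant_1$ is $\leqslant_1$, which the paper uses directly.
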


\begin{proof}

We proceed by induction on $\min f_s$. If $\min f_s = 1$, then there is $t \leqslant _0 s $ such that \[ O(f_t , \m{C}_t) \cap \Gamma \ \ \textrm{is finite}.\]

It is then clear that $\Ur _p \smallsetminus \Gamma$ is large relative to $t$. On the other hand, if $\min f_s > 1$, then there is $t \leqslant _0 s$ such that \[ \forall w \leqslant _1 t \ \ \textrm{$\Gamma$ is not large relative to $w$}.\] 

We claim that $\Ur _p \smallsetminus \Gamma$ is large relative to $t$: let $u \leqslant _0 t$. We want to find $v \leqslant _1 u $ such that $\Ur _p \smallsetminus \Gamma$ is large relative to $v$. Let $w \leqslant _1 u$. Then $w \leqslant _1 t$ and it follows that $\Gamma$ is not large relative to $w$. By induction hypothesis, since $\min f_w < \min f_u = \min f_t$ there is $v \leqslant _0 w $ such that $\Ur _p \smallsetminus \Gamma$ is large relative to $v$. Additionally $v \leqslant _1 u$. Thus $v$ is as required. \end{proof}

When combined, Lemma \ref{thm:orbit} and Lemma \ref{lem:psi} lead to Theorem \ref{thm:U_m indiv} as follows: Take $p=m$ and consider a finite partition $\gamma$ of $\Ur_m$. Without loss of generality, $\gamma$ has only two parts, namely $\Pi$ (purple points) and $\Omega$ (orange points). Fix $t \in \mathbb{P}$ such that $\min f _t = m$. According to Lemma \ref{lem:psi}, either $\Pi$ is large relative to $t$ or there is $u \leqslant _0 s$ such that $\Omega$ is large relative to $u$. In any case, there are $s \in \{t, u\}$ and $\Gamma \in \{\Pi,\Omega\}$ such that $\min f_s = m$ and $\Gamma$ is large relative to $s$. Applying Lemma \ref{thm:orbit} to $s$, we obtain a copy $\m{C}$ of $\Ur _m$ inside $\m{C}_s$ such that $\dom f_s \subset \m{C}$ and $O(f_s , \m{C}) \subset \Gamma$. Observe that $O(f_s , \m{C})$ is isometric to $\Ur _m$. $\qed$ 

\

The remaining part of this article is therefore devoted to a proof of Lemma \ref{thm:orbit}.

\section{Proof of Lemma \ref{thm:orbit}.}

\label{section:indiv}

From now on, the integer $p > 0$ is fixed together with $\Gamma \subset \Ur _p$. We proceed by induction and prove that for every strictly positive $m \in \N$ with $m \leqslant p$ the following statement $\mathcal{J}_m$ holds: 

\
$\mathcal{J} _m$ : ''For every $s \in \mathbb{P}$ such that $\min f_s = m$, if $\Gamma$ is large relative to $s$, then there exists an isometric copy $\m{C}$ of $\Ur _p$ inside $\m{C}_s$ such that:
\begin{enumerate} 
\item $\dom f_s \subset \m{C}$.
\item $O(f_s , \m{C}) \subset \Gamma$.''  
\end{enumerate} 

This section is organized as follows. In subsection \ref{subsection:reformulation}, we show that the statement $\mathcal{J}_m$ is equivalent to a stronger statement denoted $\mathcal{H}_m$. This is achieved thanks to a technical lemma (Lemma \ref{lem:red}) about the structure of the orbits in $\Ur _p$ and whose proof is postponed to subsection \ref{subsection:red}. In subsection \ref{subsection:J_1}, we initiate the proof by induction and show that the statement $\mathcal{J}_1$ holds. We then show that if $\mathcal{H}_j$ holds for every $j<m$, then $\mathcal{J}_m$ holds. The general strategy of the induction step is presented in subsection \ref{subsection:induction}, while \ref{subsection:sequences} provides the details for the most technical aspects.  

\subsection{Reformulation of $\mathcal{J}_m$.}

\label{subsection:reformulation}

As mentioned previously, we start by reformulating the statement $\mathcal{J}_m$ under a form which will be useful when performing the induction step. For a function $f$ and a subset $F$ of the domain $\dom f$ of $f$, we write $\restrict{f}{F}$ for the restriction of $f$ to $F$. Consider the following statement, denoted $\mathcal{H}_m$: 

$\mathcal{H} _m$ : ''For every $s \in \mathbb{P}$ and every $F \subset \dom f_s$ such that $\min \restrict{f_s}{F} = \min f_s = m$, if $\Gamma$ is large relative to $s$, then there exists an isometric copy $\m{C}$ of $\Ur _p$ inside $\m{C}_s$ such that:
\begin{enumerate} 
\item $\dom f_s \cap \m{C} = F$.
\item $O(\restrict{f_s}{F} , \m{C}) \subset \Gamma$.''  
\end{enumerate}

The statement $\mathcal{J}_m$ is clearly implied by $\mathcal{H}_m$: simply take $F = \dom f_s$. The purpose of the following lemma is to show that the converse is also true. 

\begin{lemma}

\label{lem:JmHm}

The statement $\mathcal{J}_m$ implies the statement $\mathcal{H}_m$.  

\end{lemma}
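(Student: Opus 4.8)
The plan is to reduce $\mathcal{H}_m$ to $\mathcal{J}_m$ by replacing the Kat\v{e}tov map $f_s$ with the restriction $\restrict{f_s}{F}$, but in order to apply $\mathcal{J}_m$ we must first arrange that $\restrict{f_s}{F}$ plays the role of the map "$f_s$" for some suitable element of $\mathbb{P}$ whose relevant orbit is exactly the orbit of $\restrict{f_s}{F}$ \emph{and} which records enough information to recover the points of $\dom f_s \smallsetminus F$. The subtlety is that $\mathcal{J}_m$, applied naively to $(\restrict{f_s}{F}, \m{C}_s)$, produces a copy $\m{C}$ of $\Ur_p$ with $\dom(\restrict{f_s}{F}) = F \subset \m{C}$ and $O(\restrict{f_s}{F}, \m{C}) \subset \Gamma$, but gives no control over $\dom f_s \cap \m{C}$; in particular the extra points of $\dom f_s \smallsetminus F$ might fall into $\m{C}$ and destroy condition (i) of $\mathcal{H}_m$. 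So the real content is a separation/thinning argument that keeps those finitely many points \emph{outside} the copy while preserving largeness.

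First I would set $g = \restrict{f_s}{F}$ and observe that $s' := (g, \m{C}_s) \in \mathbb{P}$, with $\min f_{s'} = \min \restrict{f_s}{F} = m$ by hypothesis. The key step is to check that \emph{$\Gamma$ is large relative to $s'$ whenever it is large relative to $s$}: this should follow by unravelling Definition \ref{defn:psi} and Definition \ref{defn:P}, since every $t' \leqslant_0 s'$ can be refined to some $t \leqslant_0 s$ with $\dom f_t \supset \dom f_s$ (just adjoin the points of $\dom f_s \smallsetminus F$ inside $\m{C}_{t'}$, which is possible because $\m{C}_{t'} \cong \Ur_p$ is ultrahomogeneous and $\dom f_s \cup \{\text{orbit point}\}$ embeds into $\Ur_p$), and then $O(f_t, \m{C}_t) \subset O(g, \m{C}_t) = O(f_{t'}, \m{C}_{t'})$; the recursion on $\min$ then transfers largeness upward through the $\leqslant_1$-refinements in the same way. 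This is the step I expect to require the most care, because largeness is defined by an alternating $\forall t \, \exists u$ recursion and one has to match up witnesses on the $s$-side and the $s'$-side through finitely many domain points, handling the base case $\min = 1$ (where "infinite orbit intersection" is insensitive to removing finitely many points) separately from the inductive case.

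Once largeness of $\Gamma$ relative to $s'$ is established, apply $\mathcal{J}_m$ to $s'$ to get a copy $\m{C}_0 \cong \Ur_p$ inside $\m{C}_s$ with $F \subset \m{C}_0$ and $O(g, \m{C}_0) \subset \Gamma$. It remains to shrink $\m{C}_0$ to a further copy $\m{C} \cong \Ur_p$ with $F \subset \m{C}$, still $O(\restrict{f_s}{F}, \m{C}) \subset \Gamma$, and additionally $\dom f_s \cap \m{C} = F$ — i.e. we must excise the finitely many points of $\dom f_s \smallsetminus F$ without killing universality or the orbit inclusion. Here I would invoke a structural fact about orbits in $\Ur_p$ — presumably exactly Lemma \ref{lem:red}, whose proof the paper postpones to subsection \ref{subsection:red} — to the effect that removing a finite set of points (and suitably restricting) from a copy of $\Ur_p$ leaves a copy of $\Ur_p$ on which the relevant orbit is unchanged; combined with $O(\restrict{f_s}{F}, \m{C}) \subseteq O(\restrict{f_s}{F}, \m{C}_0) \subset \Gamma$ this yields all three conclusions of $\mathcal{H}_m$. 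The main obstacle, then, is the largeness-transfer step: the bookkeeping needed to show that passing from $f_s$ to $\restrict{f_s}{F}$ preserves largeness, since everything downstream is a comparatively routine application of ultrahomogeneity and the (citable) structural lemma on orbits.
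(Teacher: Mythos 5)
Your proposal has a genuine gap in the ``largeness-transfer'' step, and the paper's actual argument sidesteps it entirely by applying the two lemmas in the opposite order.

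You set $s' = (\restrict{f_s}{F}, \m{C}_s)$ and try to deduce that $\Gamma$ is large relative to $s'$ from the hypothesis that $\Gamma$ is large relative to $s$. But $s \leqslant_0 s'$ in the order on $\mathbb{P}$, and largeness is a universally quantified condition over $\{ t : t \leqslant_0 \cdot \}$: since every $t \leqslant_0 s$ is also $\leqslant_0 s'$ but not conversely, largeness relative to $s'$ is a \emph{stronger} statement than largeness relative to $s$. Largeness passes \emph{downward} in $\leqslant_0$ (this is exactly what the paper uses in the claim following the definition of $K$, where $(g,\m{B}) \leqslant_0 (f,\Ur_p)$ is used to transfer largeness to the smaller element), not upward. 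Your proposed witness-matching does not repair this: given an arbitrary $t' \leqslant_0 s'$, the finite set $\dom f_s \smallsetminus F$ need not lie in $\m{C}_{t'}$ at all, so one cannot ``adjoin'' it to obtain a $t \leqslant_0 s$ with $\m{C}_t = \m{C}_{t'}$; and replacing those points by fresh points of $\m{C}_{t'}$ realizing the same type over $F$ does not give $\dom f_s \subset \dom f_t$ literally, nor is it clear that the resulting extension of $f_{t'}$ agreeing with $f_s$ on the new points is Kat\v{e}tov. So the step you flag as ``requiring the most care'' is, as written, not a step but an obstruction.

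The paper avoids all of this by \emph{not} forming $s'$ at all. It applies $\mathcal{J}_m$ directly to $s$ (whose largeness is given by hypothesis) to produce $\mc{C} \in \binom{\m{C}_s}{\Ur_p}$ with $\dom f_s \subset \mc{C}$ and $O(f_s, \mc{C}) \subset \Gamma$; then it invokes Lemma \ref{lem:red} inside $\mc{C}$, with $G_0 = F$, $G = \dom f_s$ and $\mathcal{G} = \{f_s\}$ (the hypotheses of Lemma \ref{lem:red} reduce, for a singleton family, to $\min \restrict{f_s}{F} = \min f_s$, which is exactly the hypothesis of $\mathcal{H}_m$). The key point you underweight is that Lemma \ref{lem:red} delivers not merely a copy avoiding $\dom f_s \smallsetminus F$, but the orbit inclusion $O(\restrict{f_s}{F}, \m{C}) \subset O(f_s, \mc{C})$: every point of the new copy realizing the restricted type over $F$ automatically realizes the full type $f_s$ in the ambient copy, hence lands in $\Gamma$. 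This makes the largeness transfer unnecessary. Your third step in fact uses Lemma \ref{lem:red} for a weaker purpose (excision only); using it for the orbit inclusion, applied after $\mathcal{J}_m(s)$ rather than $\mathcal{J}_m(s')$, is what closes the proof.
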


\begin{proof}
Our main tool here is the following technical result, whose proof is postponed to section \ref{subsection:red}. 

\begin{lemma}

\label{lem:red}

Let $G_0 \subset G$ be finite subsets of $\Ur_p$, $\mathcal{G}$ a family of Kat\v{e}tov maps with domain $G$ and such that for every $g, g' \in \mathcal{G}$: \[ \max(\restrict{|g - g'|}{G_0}) = \max | g - g'|, \] \[ \min(\restrict{(g+g')}{G_0}) = \min(g + g').\] 

Then there exists an isometric copy $\m{C}$ of $\Ur _p$ inside $\Ur _p$ such that:
\begin{enumerate} 
\item $G \cap \m{C} = G_0$.
\item $\forall g \in \mathcal{G} \ \ O(\restrict{g}{G_0} , \m{C}) \subset O(g, \Ur _p).$  
\end{enumerate} 

\end{lemma} 

Note that under the conditions of Lemma \ref{lem:red}, the restriction map $g \mapsto \restrict{g}{G_0}$ is one-to-one. Assuming Lemma \ref{lem:red}, here is how $\mathcal{J}_m$ implies $\mathcal{H}_m$: let $s$ and $F$ be as in the hypothesis of $\mathcal{H}_m$. Apply $\mathcal{J}_m$ to $s$ to get an isometric copy $\mc{C}$ of $\Ur _p$ inside $\m{C}_s$ such that $\dom f_s \subset \mc{C}$ and $O(f_s , \mc{C}) \subset \Gamma$. Apply then Lemma \ref{lem:red} inside $\mc{C}$ to $F \subset \dom f_s$ and the family $\{ f_s\}$ to get an isometric copy $\m{C}$ of $\Ur _p$ inside $\mc{C}$ such that $\dom f_s \cap \m{C} = F$ and $O(\restrict{f_s}{F} , \m{C}) \subset O(f_s, \mc{C})$. Then $\m{C}$ is as required. \end{proof}

\subsection{Proof of $\mathcal{J}_1$.} 

\label{subsection:J_1}

Consider an enumeration $\{ x_n : n \in \N\}$ of $\m{C}_s$ admitting $\dom f_s$ as an initial segment. Assume that the points $\varphi (x_0),\ldots , \varphi(x_n)$ are constructed so that: \begin{itemize}
	\item The map $\varphi$ is an isometry.
	\item $\restrict{\varphi}{\dom f_s} = id_{\dom f_s}$.
	\item $\varphi (x_k) \in \Gamma$ whenever $\varphi(x_k)$ realizes $f_s$ over $\dom f_s$. 
\end{itemize} 

We want to construct $\varphi (x _{n+1})$. Consider $h$ defined on $\{ \varphi (x_k) : k \leqslant n\} $ by: 
\[ \forall k \leqslant n \ \ h(\varphi(x_k)) = d^{\m{C}_s} (x_k , x_{n+1}).\] 

Observe that the metric subspace of $\m{C}_s$ given by $\{x_k : k \leqslant n+1\}$ witnesses that $h$ is Kat\v{e}tov. It follows that the set of all $y \in \m{C}_s \smallsetminus \dom f_s$ realizing $h$ over $\{ \varphi (x_k) : k \leqslant n\}$ is not empty and $\varphi (x_{n+1})$ can be chosen in that set. Additionally, observe that if $\restrict{h}{\dom f_s} = f_s$, then the fact that $\min f_s = 1$ and $\Gamma$ is large relative to $s$ then guarantees that $h$ can be realized by a point in $\Gamma$. We can therefore choose $\varphi (x_{n+1})$ to be one of those points. After infinitely many steps, the subspace $\m{C}$ of $\m{C}_s$ supported by $\{ \varphi (x_n) : n \in \N\}$ is as required. $\qed$

\subsection{Induction step.}

\label{subsection:induction}

Assume that the statements $\mathcal{J}_1\ldots\mathcal{J}_{m-1}$, and therefore the statements $\mathcal{H}_1\ldots\mathcal{H}_{m-1}$ hold. We are going to show that $\mathcal{J}_m$ holds. So let $s \in \mathbb{P}$ such that $\min f_s = m$ and $\Gamma$ is large relative to $s$. To make the notation easier, we assume that $s$ is of the form $(f,\Ur _p)$ and we write $F$ instead of $\dom f$. We need to produce an isometric copy $\m{C}$ of $\Ur _p$ inside $\Ur _p$ such that $F \subset \m{C}$ and $O(f , \m{C}) \subset \Gamma$. This is achieved inductively thanks to the following lemma. Recall that for metric subspaces $\m{X}$ and $\m{Y}$ of $\Ur _p$ and $\varepsilon > 0$, the sets $(\m{X}) _{\varepsilon}$ and $\binom{\m{Y}}{\Ur _p}$ are defined by: 
\[(\m{X}) _{\varepsilon} = \{ y \in \Ur _p : \exists x \in \m{X} \ \ d ^{\Ur _p} (y,x) \leqslant \varepsilon \},\]
\[ \binom{\m{Y}}{\Ur _p} = \{ \mc{U} \subset \m{Y} : \mc{U} \cong \Ur _p \}. \]

\begin{lemma}

\label{lem:ind1}

Let $\m{X}$ be a finite subspace of $\Ur _p$ and $\m{A} \in \binom{\Ur _p}{ \Ur _p}$ such that: 

\vspace{0.5em}
\hspace{1em}
(i) $F \subset \m{X} \subset \m{A}$.

\vspace{0.5em}
\hspace{1em}
(ii) $\left( \m{X} \right)_{m-1} \cap O(f,\m{A}) \subset \Gamma$.

\vspace{0.5em}
\hspace{1em}
(iii) $\forall g \in E(\m{X}) \ \ \left(\restrict{g}{F} = \restrict{f}{F}\right) \rightarrow \left(\textrm{$\Gamma$ is large relative to $(g, \m{A})$}\right)$.

\vspace{0.5em}

Then for every $h \in E(\m{X})$, there are $\m{B} \in \binom{\m{A}}{\Ur _p}$ and $x^* \in \m{B}$ realizing $h$ over $\m{X}$ such that: 

\vspace{0.5em}
\hspace{1em}
(i') $F \subset (\m{X} \cup \{x^*\}) \subset \m{B}$.

\vspace{0.5em}
\hspace{1em}
(ii') $ \left(\m{X} \cup \{ x^*\} \right)_{m-1} \cap O(f, \m{B}) \subset \Gamma$.

\vspace{0.5em}
\hspace{1em}
(iii') $\forall g \in E(\m{X} \cup \{ x^*\}) \ \ \left(\restrict{g}{F} = \restrict{f}{F}\right) \rightarrow \left(\textrm{$\Gamma$ is large relative to $(g, \m{B})$}\right)$.
\end{lemma}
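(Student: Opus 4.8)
The plan is to build $\m{B}$ and $x^*$ in two stages: first choose a good ``candidate'' copy of $\Ur_p$ that carries the right realizer of $h$ together with the correct largeness data, and then shrink it so that the new ball condition (ii') is met. Throughout I will exploit Lemma~\ref{prop:extension} for $\Ur_p$ (every Kat\v{e}tov map over a finite subspace whose one-point extension embeds is realized) together with the induction hypotheses $\mathcal{H}_1,\dots,\mathcal{H}_{m-1}$, which are available since $\min f = m$.

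First I would set up the extension. Given $h \in E(\m{X})$, let $n = \min(2\min h, p)$ be the distance type of the orbit $O(h,\m{A})$; since $\m{X}\cup\{h\}$ is a finite metric space with distances in $\{1,\dots,p\}$ and $\m{A}\cong\Ur_p$, this orbit is nonempty and, by the observation following Definition~\ref{defn:P}, isometric to $\Ur_n$. I distinguish two cases according to whether $\restrict{h}{F} = \restrict{f}{F}$ or not. If $\restrict{h}{F}\ne\restrict{f}{F}$, then no point realizing $h$ over $\m{X}$ can realize $f$ over $F$, so condition (iii') is vacuous for $g$'s extending $h$ in the ``problematic'' way and (ii') is inherited from (ii) once we pick $x^*$ realizing $h$: here I would simply take $\m{B}=\m{A}$ and any $x^*\in O(h,\m{A})$, checking that adding a single point that is at distance $>m-1$ from every point of $O(f,\m{A})$ — or more carefully, controlling which new points of $(\m{X}\cup\{x^*\})_{m-1}$ can realize $f$ — keeps (ii'). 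The substantive case is $\restrict{h}{F}=\restrict{f}{F}$. Then hypothesis (iii) tells us $\Gamma$ is large relative to $(h,\m{A})$ (viewing $h$ as an element of $E(\m{X})$ with $\m{X}\subset\m{A}$; one needs to massage $h$ into the $\mathbb{P}$-format $(h,\m{A})$, with $\dom f_s$ there equal to $\m{X}$). Unwinding Definition~\ref{defn:psi} and using $\min h \leqslant m$, largeness gives, after passing to a suitable $\leqslant_0$-refinement and then a $\leqslant_1$-refinement if $\min h>1$, a point $x^*$ realizing $h$ such that the relevant orbit through $x^*$ still meets $\Gamma$ in the right ``large'' way; concretely I expect to apply one of $\mathcal{H}_1,\dots,\mathcal{H}_{m-1}$ (whichever matches $\min h$ after the refinement) to pull the whole orbit of a restriction of $h$ into $\Gamma$, which is exactly what is needed to maintain (iii') at the next stage.

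Next I would secure the ball condition (ii'). Having chosen $x^*$ realizing $h$ over $\m{X}$, the set $(\m{X}\cup\{x^*\})_{m-1}\cap O(f,\m{B})$ splits as $(\m{X})_{m-1}\cap O(f,\m{B})$, already controlled by (ii) provided $\m{B}\subset\m{A}$, together with the genuinely new points $N = \{y\in O(f,\m{B}) : d^{\Ur_p}(y,x^*)\leqslant m-1\}$. These $y$'s realize over $\m{X}\cup\{x^*\}$ a Kat\v{e}tov map $g$ with $\restrict{g}{F}=\restrict{f}{F}$, $g(x^*)\leqslant m-1$, and $\min g \leqslant m-1 < m$. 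So I would apply the induction hypothesis $\mathcal{H}_j$ with $j=\min g$ (finitely many such $g$'s arise, since they take values in $\{1,\dots,p\}$ on the finite set $\m{X}\cup\{x^*\}$) to a copy of $\Ur_p$ in which $\Gamma$ is large relative to the corresponding element of $\mathbb{P}$ — the largeness being supplied precisely by the refinement performed in the previous paragraph — and thereby thin down $\m{A}$ to $\m{B}$ so that each such orbit $O(g,\m{B})$ lands in $\Gamma$. Iterating over the finitely many relevant $g$'s (each application of $\mathcal{H}_j$ taking place inside the copy produced by the previous one, via Lemma~\ref{lem:red} to keep $\m{X}\cup\{x^*\}$ sitting correctly) yields $\m{B}\in\binom{\m{A}}{\Ur_p}$ with (i') and (ii'). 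Finally, (iii') is checked by hand: for $g\in E(\m{X}\cup\{x^*\})$ with $\restrict{g}{F}=\restrict{f}{F}$, largeness of $\Gamma$ relative to $(g,\m{B})$ follows from largeness relative to $(\restrict{g}{\m{X}},\m{A})$ — given by (iii) — together with the fact that we only ever passed to $\leqslant_0$ and $\leqslant_1$ refinements and to subcopies containing $\m{X}\cup\{x^*\}$, under which largeness is preserved by Definition~\ref{defn:psi}; the case $\min g = m$ versus $\min g < m$ must be separated, the former handled directly by the recursive clause of largeness and the latter absorbed into the thinning above.

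The main obstacle I anticipate is the bookkeeping in the second and third stages: making sure that a single copy $\m{B}$ simultaneously (a) inherits (ii) from $\m{A}$, (b) kills every new offending orbit $O(g,\cdot)$ for the finitely many $g$ with $g(x^*)\leqslant m-1$, and (c) still witnesses largeness of $\Gamma$ relative to $(g,\m{B})$ for \emph{all} $g\in E(\m{X}\cup\{x^*\})$ with $\restrict{g}{F}=\restrict{f}{F}$, including those with $\min g = m$ where the induction hypothesis gives nothing and one must instead argue that largeness survives the refinements. Keeping the copies nested and using Lemma~\ref{lem:red} to re-seat $\m{X}\cup\{x^*\}$ at each step is what makes this manageable, but verifying that the largeness condition (iii') is genuinely preserved — rather than merely (ii') — under the sequence of $\leqslant_0$/$\leqslant_1$ passages and sub-copy extractions is the delicate point, and is presumably where the bulk of subsection~\ref{subsection:sequences} will be spent.
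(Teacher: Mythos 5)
Your high-level strategy — realize $h$, then thin to kill the new "offending" orbits through $x^*$ using the induction hypotheses and Lemma~\ref{lem:red} — matches the paper's. But there are two concrete gaps.

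First, the case $\restrict{h}{F}\ne\restrict{f}{F}$ cannot be dismissed by taking $\m{B}=\m{A}$ and any $x^*\in O(h,\m{A})$. Even when $x^*$ itself does not realize $f$ over $F$, the set $\left(\m{X}\cup\{x^*\}\right)_{m-1}\cap O(f,\m{A})$ strictly enlarges $\left(\m{X}\right)_{m-1}\cap O(f,\m{A})$: there can be points of $O(f,\m{A})$ at distance $\leqslant m-1$ from $x^*$ but farther than $m-1$ from all of $\m{X}$, and hypothesis (ii) says nothing about them. So (ii') fails without thinning in this case too. The paper avoids the case split entirely by introducing the finite set $K = \{\phi \in E(\m{X}\cup\{h\}) : \restrict{\phi}{F}=\restrict{f}{F},\ \phi(h)\leqslant m-1\}$, enumerating it with $\phi_\alpha(h)$ nondecreasing, and running one inductive construction (Lemma~\ref{lem:ind2}) that pushes every orbit $O(\phi,\cdot)$ for $\phi\in K$ into $\Gamma$ — regardless of whether $h$ extends $f$; only a final adjustment (choosing a fresh realizer $x^*$ at distance $1$ from the constructed $x'$ and re-seating via Lemma~\ref{lem:red}) is reserved for the case $\restrict{h}{F}=\restrict{f}{F}$ to guarantee $x^*\in\Gamma$.

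Second, you treat (iii') as a separate, delicate verification requiring that "largeness is preserved under refinements," but that is not how the paper argues and the route you sketch has a hole. You propose deducing largeness of $\Gamma$ relative to $(g,\m{B})$ from largeness relative to $(\restrict{g}{\m{X}},\m{A})$, but $(g,\m{B})\leqslant_0(\restrict{g}{\m{X}},\m{A})$ only when $\min g = \min \restrict{g}{\m{X}}$; if $g(x^*)<\min\restrict{g}{\m{X}}$ it is a strict $\leqslant$-refinement and largeness does not automatically pass down. The paper's clean observation (the first claim in subsection~\ref{subsection:induction}) is that (iii') is a \emph{consequence} of (i') and (ii') and needs no independent bookkeeping: since $g$ extends $f$ on $F$ and $\min f = m$, either $\min g = m$, in which case $(g,\m{B})\leqslant_0(f,\Ur_p)$ and largeness relative to $(f,\Ur_p)$ passes down through $\leqslant_0$ by the universally quantified form of Definition~\ref{defn:psi}; or $\min g\leqslant m-1$, in which case $O(g,\m{B})\subset\left(\m{X}\cup\{x^*\}\right)_{m-1}\cap O(f,\m{B})\subset\Gamma$ by (ii'), and a fully $\Gamma$-contained orbit is trivially large. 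Recognizing this reduction is what makes the construction in subsection~\ref{subsection:sequences} manageable: one only ever has to secure (i') and (ii'), and that is exactly what the conditions on $K$ encode.
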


\begin{claimm}
Lemma \ref{lem:ind1} implies $\mathcal{J} _m$. 
\end{claimm}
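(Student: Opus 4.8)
The plan is to run a back-and-forth (here, really a front-forward) construction, using Lemma \ref{lem:ind1} as the one-step extension device, to build the desired copy $\m{C}$ of $\Ur_p$ inside $\Ur_p = \m{C}_s$ as an increasing union of finite spaces. First I would fix an enumeration $\{x_n : n \in \N\}$ of $\Ur_p$ (the ambient copy), and I would build an increasing sequence of finite subspaces $\m{X}_0 \subset \m{X}_1 \subset \cdots$ together with a decreasing sequence $\m{A}_0 \supseteq \m{A}_1 \supseteq \cdots$ in $\binom{\Ur_p}{\Ur_p}$, maintaining at every stage the three invariants (i), (ii), (iii) of Lemma \ref{lem:ind1} with $(\m{X}_n, \m{A}_n)$ in place of $(\m{X}, \m{A})$. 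The space $\m{C}$ will be $\bigcup_n \m{X}_n$, and I will arrange $\m{X}_n \cap \{x_0,\ldots,x_n\} \subseteq \bigcup_k \m{X}_k$ so that $\m{C}$ ends up being all of some isometric copy of $\Ur_p$.

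The base case is to take $\m{X}_0 = F$ and $\m{A}_0 = \Ur_p$: invariant (i) is immediate, invariant (iii) holds because the only $g \in E(F)$ with $\restrict{g}{F} = \restrict{f}{F}$ is $f$ itself and $\Gamma$ is large relative to $s = (f,\Ur_p)$ by hypothesis, and invariant (ii) holds because $\left(F\right)_{m-1} \cap O(f,\Ur_p) = \emptyset$ — any point $y$ realizing $f$ over $F$ has $d(y,x) = f(x) \geq \min f = m > m-1$ for all $x \in F$, so $y \notin (F)_{m-1}$. For the inductive step I have a finite $\m{X}_n$, an $\m{A}_n$, and the three invariants; I need to (a) decide which Kat\v{e}tov maps $h \in E(\m{X}_n)$ to realize in order to eventually absorb the enumeration and get a genuine copy of $\Ur_p$, and (b) apply Lemma \ref{lem:ind1} to each such $h$ in turn. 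Concretely, to make $\m{C}$ ultrahomogeneous and universal for distances $\{1,\ldots,p\}$, at stage $n$ I handle the point $x_n$ (if $x_n$ is already "decided", i.e. forced to lie in or out of $\m{C}$ by the metric constraints relative to $\m{X}_n$, do nothing; otherwise realize the Kat\v{e}tov map $h$ it induces over $\m{X}_n$) and I handle a Kat\v{e}tov map from a bookkeeping list of all Kat\v{e}tov maps over finite subspaces, so that $O(h,\m{C}) \neq \emptyset$ for each relevant $h$; Lemma \ref{lem:ind1} produces $\m{B}$ and $x^*$ with exactly the primed invariants, so I set $\m{X}_{n+1} = \m{X}_n \cup \{x^*\}$ and $\m{A}_{n+1} = \m{B}$.

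Finally I would verify that $\m{C} = \bigcup_n \m{X}_n$ works. By construction $\m{C}$ is a countable metric space with distances in $\{1,\ldots,p\}$; the bookkeeping ensures every Kat\v{e}tov map over a finite subspace of $\m{C}$ whose one-point extension still has distances in $\{1,\ldots,p\}$ is realized in $\m{C}$, so by Lemma \ref{prop:extension} and the characterization of $\Ur_p$, $\m{C} \cong \Ur_p$. Invariant (i) at all stages gives $F \subseteq \m{C}$, and $F$ sits inside $\m{C}$ as $\dom f \cap \m{C} = F$ (the only new points added outside $F$ are the $x^*$'s, and invariant (i') keeps $F$ fixed — here I should be slightly careful that no other point of $\dom f_s$ gets reabsorbed, but in the reduction at the start of \ref{subsection:induction} we took $s = (f,\Ur_p)$ with $F = \dom f$, so $\dom f_s = F$ and there is nothing more to exclude). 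For (2), $O(f,\m{C}) \subseteq \Gamma$: take $y \in O(f,\m{C})$; then $y \in \m{X}_n$ for some $n$, hence $y \in \m{X}_n \subseteq (\m{X}_n)_{m-1}$, and since the metric of $\m{C}$ agrees with that of $\m{A}_n$ and $y$ realizes $f$ over $F \subseteq \m{X}_n$, invariant (ii) for stage $n$ gives $y \in (\m{X}_n)_{m-1} \cap O(f,\m{A}_n) \subseteq \Gamma$. (One small point to check: $O(f,\m{C}) \subseteq O(f,\m{A}_n)$, which holds because $\m{C} \subseteq \m{A}_n$ and realizing $f$ over $F$ is an intrinsic metric condition.) That establishes $\mathcal{J}_m$.

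The main obstacle I expect is the bookkeeping in the inductive step: one must simultaneously (i) exhaust the enumeration of $\Ur_p$ so that the limit is a full isometric copy rather than a proper sub-copy, (ii) realize enough Kat\v{e}tov maps to guarantee ultrahomogeneity and universality of $\m{C}$, and (iii) do all of this while only ever invoking Lemma \ref{lem:ind1}, which adds one point at a time and requires the hypothesis $\restrict{g}{F} = \restrict{f}{F}$ in invariant (iii) — so one has to check that invariant (iii) is genuinely preserved and not just that (iii') is produced, and that the single-point extensions suffice to realize arbitrary finite Kat\v{e}tov maps by iterating. The verification that invariant (ii) survives passage to the limit is the other delicate point, but as noted it reduces to the observation that any realizer of $f$ in $\m{C}$ appears at some finite stage inside the corresponding $(\m{X}_n)_{m-1}$.
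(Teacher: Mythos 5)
Your construction runs on the same engine as the paper's — iterate Lemma \ref{lem:ind1} to build nested finite subspaces $\m{X}_0 \subset \m{X}_1 \subset \cdots$ inside a shrinking chain $\m{A}_0 \supseteq \m{A}_1 \supseteq \cdots$ of copies of $\Ur_p$, preserving the invariants (i)--(iii) — and your base-case verification (that $(F)_{m-1} \cap O(f,\Ur_p) = \emptyset$ since any realizer of $f$ is at distance $\geqslant m$ from $F$) and your limit verification (that any $y \in O(f,\m{C})$ lands in $\m{X}_N$ for some $N$, hence in $(\m{X}_N)_{m-1}\cap O(f,\m{A}_N) \subset \Gamma$) are both correct and agree with the paper. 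Where you lose your way is in how to guarantee $\m{C} \cong \Ur_p$. The paper's argument is much simpler than your bookkeeping scheme and needs no bookkeeping at all: fix once and for all an enumeration $\{x_n\}$ of the ambient $\Ur_p$ with $F = \{x_0,\ldots,x_k\}$ an initial segment, and at stage $l+1$ feed Lemma \ref{lem:ind1} the map $h_{l+1}$ defined on $\{\tilde{x}_0,\ldots,\tilde{x}_l\}$ by $h_{l+1}(\tilde{x}_i) = d^{\Ur_p}(x_{l+1},x_i)$. Since $\tilde{x}_{l+1} = x^*$ realizes $h_{l+1}$, the map $x_i \mapsto \tilde{x}_i$ extends to an isometry on $\{x_0,\ldots,x_{l+1}\}$, and in the limit $x_n \mapsto \tilde{x}_n$ is an isometry of $\Ur_p$ onto $\m{C}$ — so $\m{C} \cong \Ur_p$ for free. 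Your proposal gestures at this idea ("realize the Kat\v{e}tov map $h$ it induces over $\m{X}_n$") but then overlays it with a needless parallel bookkeeping list of ``all Kat\v{e}tov maps over finite subspaces'' and with talk of points being ``decided'' or ``absorbed,'' which does not make sense here: $\m{C}$ is a fresh isometric copy built inside $\Ur_p$, not a selection of points from the ambient enumeration, and there is nothing to absorb. Dropping those two red herrings and stating that the enumeration serves as a \emph{template} for the Kat\v{e}tov maps to realize — so that $\m{C} \cong \Ur_p$ is guaranteed by construction rather than by a back-and-forth argument — would turn your outline into the paper's proof.
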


\begin{proof}
The required copy of $\m{C}$ can be constructed inductively. We start by fixing an enumeration $\{ x_n : n \in \N \}$ of $\Ur _p$ such that $F = \{x_0 ,\ldots , x_k \}$ and by setting $\tilde{x}_i = x_i$ for every $i \leqslant k$. Next, we proceed as follows: set $\m{A}_k = \Ur _p$. Then the subspace of $\Ur _p$ supported by $\{\tilde{x}_0 ,\ldots , \tilde{x}_k\}$ and the copy $\m{A}_k$ satisfy the requirements (i)-(iii) of Lemma \ref{lem:ind1}. Consider then $h_{k+1}$ defined on $\{\tilde{x}_0 ,\ldots , \tilde{x}_k\}$ by: \[ \forall i \leqslant k \ \ h_{k+1}(\tilde{x}_i) = d^{\Ur _p}(x_{k+1} , x_i).\] 

Then $h_{k+1}$ is Kat\v{e}tov over $\{\tilde{x}_0 ,\ldots , \tilde{x}_k\}$ and Lemma \ref{lem:ind1} can be applied to the subspace of $\Ur _p$ supported by $\{\tilde{x}_0 ,\ldots , \tilde{x}_k\}$, the copy $\m{A}_k$ and the Kat\v{e}tov map $h_{k+1}$. It produces $x^*$ and $\m{B}$, and we set $\tilde{x} _{k+1} = x^*$ and $\m{A}_{k+1} =\m{B}$. In general, assume that $\tilde{x}_0 ,\ldots , \tilde{x}_l$ and $\m{A}_k ,\ldots , \m{A}_l$ are constructed so that $\m{A}_l$ and the subspace of $\Ur _p$ supported by $\{\tilde{x}_0 ,\ldots , \tilde{x}_l\}$ satisfy the hypotheses of Lemma \ref{lem:ind1}. Consider $h_{l+1}$ defined on $\{\tilde{x}_0 ,\ldots , \tilde{x}_l\}$ by: \[ \forall i \leqslant l \ \ h_{l+1}(\tilde{x}_i) = d^{\Ur _p}(x_{l+1} , x_i).\] 

Then $h_{l+1}$ is Kat\v{e}tov over $\{\tilde{x}_0 ,\ldots , \tilde{x}_l\}$, Lemma \ref{lem:ind1} can be applied to produce $x^*$ and $\m{B}$, and we set $\tilde{x} _{l+1} = x^*$ and $\m{A}_{l+1} = \m{B}$. After infinitely steps, we are left with $\m{C} = \{ \tilde{x}_n : n \in \N\}$ isometric to $\Ur _p$, as required. \end{proof} 

The remaining part of this section is consequently devoted to a proof of Lemma~\ref{lem:ind1} where $\m{X}$, $\m{A}$ and $h$ are fixed according to the requirements (i)-(iii) of Lemma \ref{lem:ind1}. 

\begin{claimm}
If $x^*$ and $\m{B}$ satisfy (i') and (ii') of Lemma~\ref{lem:ind1}, then (iii') is also satisfied. 
\end{claimm}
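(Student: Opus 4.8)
The plan is to exploit the fact that $(\mathrm{ii}')$ says a lot more than it seems to, once it is combined with $(\mathrm{i}')$ and the definition of largeness. Fix $g \in E(\m{X} \cup \{x^*\})$ with $\restrict{g}{F} = \restrict{f}{F}$, and let $g_0 = \restrict{g}{\m{X}}$. Since $\restrict{g_0}{F} = \restrict{f}{F}$, hypothesis $(\mathrm{iii})$ of Lemma~\ref{lem:ind1} applies to $g_0$: $\Gamma$ is large relative to $(g_0,\m{A})$. The goal is to pass from largeness of $\Gamma$ relative to $(g_0,\m{A})$ to largeness of $\Gamma$ relative to $(g,\m{B})$. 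The first step is therefore to understand how the orbit of $g$ over $\m{X}\cup\{x^*\}$ sits inside the orbit of $g_0$ over $\m{X}$: a point $y$ realizes $g$ over $\m{X}\cup\{x^*\}$ iff it realizes $g_0$ over $\m{X}$ and in addition $d(y,x^*) = g(x^*)$. So $O(g,\m{Z})\subset O(g_0,\m{Z})$ for any $\m{Z}$ containing $\m{X}\cup\{x^*\}$.

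The second step is the heart of the matter: I want to show $\min g \geqslant m - k$ forces the relevant orbit points to land in $\Gamma$ via $(\mathrm{ii}')$, and track this through the recursive definition of largeness. The key observation is that since $\min \restrict{f}{F} = m$ (as $F\subset\m{X}$ and $\min f_s = m$ in the induction-step setup) and $\restrict{g}{F}=\restrict{f}{F}$, every value $g$ takes on $F$ is at least $m$; in particular any point $y$ realizing $g$ over $\m{X}\cup\{x^*\}$ has $d(y,x^*) = g(x^*)$ and, crucially, $d(y,z)\le m-1+$ something... — more to the point, $x^* \in (\m{X}\cup\{x^*\})$ and a realizer $z$ of $f$ over $\m{X}\cup\{x^*\}$ inside $\m{B}$ has $d(z,x^*) \le \ldots$. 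Here is the clean route: unwind the definition of ``$\Gamma$ large relative to $(g,\m{B})$'' down to the base case. This produces, for various $t\le_0 (g,\m{B})$, a descending chain eventually reaching some $u$ with $\min f_u = 1$, and one must show $O(f_u,\m{C}_u)\cap\Gamma$ is infinite. At the bottom of the chain, $\dom f_u \supset \m{X}\cup\{x^*\}$, the function $f_u$ extends $g$ hence agrees with $f$ on $F$, and its minimum is $1$; I will argue that any realizer $y$ of $f_u$ is within distance $m-1$ of $x^*$ (because $\min f_u=1$ gives, by the triangle inequality chain from $x^*$ through a realizer, that the distance from $y$ to $x^*$ is controlled), hence $y \in (\m{X}\cup\{x^*\})_{m-1}$, and if moreover $y$ realizes $f$ over $\m{X}\cup\{x^*\}$ then $y\in O(f,\m{B})$, so by $(\mathrm{ii}')$, $y\in\Gamma$. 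The genuinely delicate point — and what I expect to be the main obstacle — is to verify that the realizers counted in the base case of ``largeness of $(g,\m{B})$'' really do realize $f$ over $\m{X}\cup\{x^*\}$ (not merely over $F$), so that $(\mathrm{ii}')$ can be invoked; this should follow because passing from $(g,\m{B})$ downward through $\le_0$ never changes the restriction to $\dom g \supset \m{X}\cup\{x^*\}$, and $f = \restrict{f}{\dom f}$ is compatible with $g$ on their common domain $\m{X}\cup\{x^*\}$... — wait, $f$ has domain $F \subset \m{X}$, so $f$ and $g$ agree on $F$ only. Thus a realizer of $f_u$ realizes $f$ over $F$; to get it into $O(f,\m{B})$ one needs $\dom f \subset \m{B}$ and the realizer to have the right distances to \emph{all} of $\dom f = F$, which it does since $f_u \supset g \supset \restrict{f}{F}$. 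So in fact $O(f_u, \m{C}_u) \subset O(\restrict{f}{F}, \m{B}) = O(f,\m{B})$ because $\dom f = F$. Good — that resolves the obstacle.

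Putting it together: given any $t \le_0 (g,\m{B})$, I descend through the recursion for largeness of $(g_0,\m{A})$ — using that $\m{C}_t \subset \m{B} \subset \m{A}$ and $f_t$ restricted to $\dom g_0 = \m{X}$ agrees with $g_0$ — to produce the required witnesses $u \le_1 (\cdots)$ for largeness relative to $(g,\m{B})$; at each stage the largeness hypothesis for $(g_0,\m{A})$ supplies a subpair, and the extra constraint $d(\,\cdot\,,x^*)=g(x^*)$ is accommodated by Lemma~\ref{prop:extension} since $\m{B}$ is a copy of $\Ur_p$ and the relevant Kat\v{e}tov extensions still embed. When the recursion bottoms out at $\min f_u = 1$, infinitude of $O(f_u,\m{C}_u)$ in $\m{A}$ (from largeness of $(g_0,\m{A})$) together with $O(f_u,\m{C}_u)\subset O(f,\m{B})\subset (\m{X}\cup\{x^*\})_{m-1}\cap O(f,\m{B}) \subset \Gamma$ by $(\mathrm{ii}')$ gives infinitude of $O(f_u,\m{C}_u)\cap\Gamma$. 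Hence $\Gamma$ is large relative to $(g,\m{B})$, which is exactly $(\mathrm{iii}')$. The one arithmetic fact to pin down carefully is that $\min f_u = 1$ together with the values of $f_u$ on $F$ being $\ge m$ forces realizers of $f_u$ to lie in $(\m{X}\cup\{x^*\})_{m-1}$; this uses the Kat\v{e}tov inequality $d(y,x^*) \le f_u(z_0) + f_u(x^*)$ for a point $z_0$ with $f_u(z_0)=1$ — no, more simply, it should be arranged that $x^* $ itself, or a point at distance $m-1$ from it, is placed in $\dom f_u$ during the descent, which is consistent with all Kat\v{e}tov constraints. I expect the bookkeeping of this descent — matching the $\le_0/\le_1$ relations for the two ambient pairs and carrying the $x^*$-constraint along — to be the main technical burden, but no new idea beyond Lemma~\ref{prop:extension} and the definitions is needed.
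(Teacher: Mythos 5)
There is a genuine gap, and your overall route is considerably harder than necessary. The statement to verify is that $\Gamma$ is large relative to $(g,\m{B})$, and the clean way to see it is a two-way case split on $\min g$. Since $\restrict{g}{F}=\restrict{f}{F}$ and $\min\restrict{f}{F}=m$, we always have $\min g\leqslant m$. If $\min g = m$, then $(g,\m{B})\leqslant_0 (f,\Ur_p)$ directly (the three inclusions of $\leqslant$ are precisely $(\mathrm{i}')$ plus $\m{B}\subset\Ur_p$, and the restriction and min conditions hold), and largeness passes down along $\leqslant_0$ by transitivity of $\leqslant_0$ — no invocation of $(\mathrm{iii})$ is needed at all, only the induction-step hypothesis that $\Gamma$ is large relative to $(f,\Ur_p)$. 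If $\min g\leqslant m-1$, then \emph{every} point $y\in O(g,\m{B})$ satisfies $d(y,z)=g(z)\leqslant m-1$ for the point $z\in\m{X}\cup\{x^*\}$ at which $g$ attains its minimum, so $O(g,\m{B})\subset(\m{X}\cup\{x^*\})_{m-1}\cap O(f,\m{B})\subset\Gamma$ by $(\mathrm{ii}')$; a pair whose full orbit lies in $\Gamma$ is large by an easy induction on the minimum. You circle around both of these facts but never isolate either cleanly.

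Concretely, your argument breaks in two places. First, you assert that at the bottom of the descent a realizer $y$ of $f_u$ with $\min f_u = 1$ lies in $(\m{X}\cup\{x^*\})_{m-1}$ ``by the triangle inequality chain from $x^*$.'' This does not follow: the point $z\in\dom f_u$ achieving $f_u(z)=1$ is in general a newly adjoined point of the descent, not an element of $\m{X}\cup\{x^*\}$, and $d(y,x^*)\leqslant 1+d(z,x^*)$ gives no bound of the form $m-1$. Second, your final display asserts $O(f,\m{B})\subset(\m{X}\cup\{x^*\})_{m-1}\cap O(f,\m{B})$, i.e.\ $O(f,\m{B})\subset(\m{X}\cup\{x^*\})_{m-1}$, which is false: since $\restrict{f}{F}\geqslant m$, a realizer of $f$ over $F$ can perfectly well be at distance $\geqslant m$ from every point of $\m{X}\cup\{x^*\}$. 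Moreover the ``bookkeeping'' you flag as the main burden is itself not resolvable as stated: $(g,\m{B})\leqslant(g_0,\m{A})$ but not $\leqslant_0(g_0,\m{A})$ when $g(x^*)<\min g_0$, so the largeness hypothesis for $(g_0,\m{A})$ does not directly furnish the quantified witnesses required by the definition applied to $(g,\m{B})$, and absorbing the $x^*$-constraint into the recursion is precisely the step that the direct case split makes unnecessary. I would recommend dropping $g_0$ and $(\mathrm{iii})$ entirely and arguing the two cases above.
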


\begin{proof}
Let $g \in E( \m{X} \cup \{ x^*\})$ be such that $\restrict{g}{F} = \restrict{f}{F}$. We need to show that $\Gamma$ is large relative to $(g,\m{B})$. If $\min g \geqslant m$, then $(g,\m{B}) \leqslant _0 (f,\Ur _p)$. Since $\Gamma$ is large relative to $(f, \Ur _p)$, it follows that $\Gamma$ is also large relative to $(g,\m{B})$ and we are done. On the other hand, if $\min g \leqslant m-1$, then \[ O(g, \m{B}) \subset \left(\left(\m{X} \cup \{ x^*\} \right)_{m-1} \cap O(f, \m{B}) \right) \subset \Gamma.\] 

So $\Gamma$ is large relative to $(g,\m{B})$.\end{proof}

With this fact in mind, we define \[ K = \{ \phi \in E(\m{X} \cup \{ h \}) : \restrict{\phi}{F} = \restrict{f}{F} \ \ \mathrm{and} \ \ \phi(h) \leqslant m-1 \}.\] 

Two comments about notation before we go on. First, as specified in \ref{subsection:katetov maps and orbits}, $\m{X} \cup \{ h \}$ in the definition of $K$ above is understood as the one-point metric extension $\m{X} \cup \{ h \}$ of $\m{X}$ obtained by setting $ d (x, h) = h(x)$ and $d (x, y) = d^{\m{X}} (x, y)$ for every $x, y$ in $X$. Next, in the sequel, when $\m{X}\subset \Ur _p$ and $\m{X}\cup\{u\}, \m{X}\cup \{v\}$ are one-point metric extensions $\m{X}$ (provided by points of $\Ur _p \smallsetminus \m{X}$ or by Kat\v{e}tov maps over $\m{X}$), we will write $\m{X}\cup\{u\} \cong \m{X}\cup \{v\}$ when $d(x,u)=d(x,v)$ whenever $x\in \m{X}$. The reason for which $K$ is relevant here lies in the following claim. 

\begin{claimm}
Assume that $\m{B} \in \binom{\m{A}}{\Ur _p}$ and $x^* \in \m{B}$ are such that: 

\begin{enumerate}
\item $\m{X} \subset \m{B}$.
\item $x^*$ realizes $h$ over $\m{X}$.
\item For every $\phi \in K$, every point in $\m{B}$ realizing $\phi$ over $\m{X} \cup \{ x ^*\} \cong \m{X} \cup \{ h\}$ is in $\Gamma$. 
\item $x^* \in \Gamma$ if $\restrict{h}{F} = \restrict{f}{F}$ (that is if $x^* \in O(f, \m{B})$). 
\end{enumerate} 

Then $x^*$ and $\m{B}$ satisfy (i') and (ii') Lemma \ref{lem:ind1}.
\end{claimm}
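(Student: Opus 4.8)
The plan is to verify that conditions (1)--(4) of this claim force exactly the requirements (i') and (ii') of Lemma \ref{lem:ind1}. Condition (i') is immediate: from (1) we have $\m{X} \subset \m{B}$, from (2) we have $x^* \in \m{B}$, and $F \subset \m{X}$ by hypothesis (i) of Lemma \ref{lem:ind1}; hence $F \subset \m{X} \cup \{x^*\} \subset \m{B}$. So the real content is (ii'), namely that $(\m{X} \cup \{x^*\})_{m-1} \cap O(f, \m{B}) \subset \Gamma$.

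The key step is to take an arbitrary point $y \in (\m{X} \cup \{x^*\})_{m-1} \cap O(f, \m{B})$ and show $y \in \Gamma$ by distinguishing two cases according to whether $y$ is within distance $m-1$ of $x^*$ or of some point of $\m{X}$. First, if $d^{\m{B}}(y, x^*) \leqslant m-1$, I would consider the Kat\v{e}tov map $\phi$ over $\m{X} \cup \{h\}$ (equivalently over $\m{X} \cup \{x^*\} \cong \m{X} \cup \{h\}$) given by $\phi(z) = d^{\m{B}}(y,z)$ for $z \in \m{X} \cup \{x^*\}$; this is realized by $y$ and moreover $\phi(h) = d^{\m{B}}(y,x^*) \leqslant m-1$. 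Since $y \in O(f,\m{B})$, we have $\restrict{\phi}{F} = \restrict{f}{F}$, so $\phi \in K$, and condition (3) yields $y \in \Gamma$. (One must also handle the degenerate possibility $y = x^*$ separately: then $y \in O(f,\m{B})$ means $\restrict{h}{F} = \restrict{f}{F}$, so condition (4) gives $x^* \in \Gamma$.) Second, if $d^{\m{B}}(y,x^*) > m-1$ but $d^{\m{B}}(y, x) \leqslant m-1$ for some $x \in \m{X}$, then $y \in (\m{X})_{m-1} \cap O(f, \m{B})$, and since $\m{B} \subset \m{A}$ we get $y \in (\m{X})_{m-1} \cap O(f, \m{A}) \subset \Gamma$ by hypothesis (ii) of Lemma \ref{lem:ind1}.

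The subtle point I expect to cause friction is the identification $\m{X} \cup \{x^*\} \cong \m{X} \cup \{h\}$ and the bookkeeping that goes with it: one needs $d^{\m{B}}(x, x^*) = h(x)$ for all $x \in \m{X}$ (which is exactly condition (2), "$x^*$ realizes $h$ over $\m{X}$"), so that any Kat\v{e}tov map over the one-point extension $\m{X} \cup \{x^*\}$ corresponds to a Kat\v{e}tov map over $\m{X} \cup \{h\}$ with the same restriction to $\m{X}$ and the same value at the new point; this is what lets the membership $\phi \in K$ and condition (3) be applied. Once this translation is set up cleanly, the argument is just the two-case split above, and there is no further obstacle — the claim is essentially a matter of unwinding definitions, with condition (3) covering the "new" distances, condition (2) the compatibility, condition (4) the boundary case $y = x^*$, and hypothesis (ii) of Lemma \ref{lem:ind1} the "old" distances inherited from $\m{A}$.
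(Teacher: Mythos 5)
Your proof is correct and takes essentially the same route as the paper: split $(\m{X}\cup\{x^*\})_{m-1}$ into the $(\m{X})_{m-1}$ part (handled by hypothesis (ii) of Lemma \ref{lem:ind1}, using $O(f,\m{B})\subset O(f,\m{A})$), the $y=x^*$ boundary case (handled by condition (4)), and the remaining $(\{x^*\})_{m-1}$ part (handled by showing the distance function of $y$ over $\m{X}\cup\{x^*\}\cong\m{X}\cup\{h\}$ lies in $K$ and invoking condition (3)). The only cosmetic difference is the order in which the two sub-balls are tested; the paper also explicitly notes that the metric space $\m{X}\cup\{x^*,y\}$ witnesses that $\phi$ is Kat\v{e}tov, a point you left implicit.
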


\begin{proof}
The requirement (i') is obviously satisfied so we concentrate on (ii'). Let $y \in \left( \m{X} \cup \{ x^*\} \right)_{m-1} \cap O(f,\m{B})$. We need to prove that $y \in \Gamma$. If $y \in \left(\m{X} \right)_{m-1}$, then $y$ is actually in $\left(\m{X}\right)_{m-1} \cap O(f, \m{A}) \subset \Gamma$ and we are done. Otherwise, $y \in \left(\{x^* \}\right)_{m-1}$. If $y=x^*$, there is nothing to do: since $y$ is in $O(f, \m{B})$, so is $x^*$. Thus, by (iv), $x^* \in \Gamma$, that is $ y \in \Gamma$. Otherwise, let $\phi$ be the Kat\v{e}tov map realized by $y$ over $\m{X} \cup \{ x ^*\} \cong \m{X} \cup \{ h\}$. According to (iii), it suffices to show that $\phi \in K$. This is what we do now. First, the metric space $\m{X}\cup \{ x^*, y\}$ witnesses that $\phi$ is Kat\v{e}tov over $\m{X}\cup \{ h\}$. Next, $y \in O(f,\m{B})$ hence $\restrict{\phi}{F} = \restrict{f}{F}$. Finally, $\phi(h) = d^{\Ur _p} (x^*,y) \leqslant m-1$ since $y \in \left(\{x^* \}\right)_{m-1}$. \end{proof}

The strategy to construct $\m{B}$ and $x^*$ is the following one. Let $\{ \phi _{\alpha} : \alpha < \left|K\right|\}$ be an enumeration of $K$. We first construct a sequence of points $(x_{\alpha})_{\alpha < \left|K\right|}$ and a decreasing sequence $(\m{D}_{\alpha}) _{\alpha < \left|K\right|}$ of copies of $\Ur _p$ so that $x _{\alpha} \in \m{D}_{\alpha}$ and for every $\beta \leqslant \alpha < \left|K\right|$: 

\begin{enumerate}
\item $\m{X} \subset \m{D} _{\alpha}$. 
\item $x _{\alpha}$ realizes $h$ over $\m{X}$. 
\item Every point in $\m{D}_{\alpha}$ realizing $\phi _{\beta}$ over $\m{X} \cup \{ x_{\alpha}\} \cong \m{X} \cup \{ h\}$ is in $\Gamma$. 
\end{enumerate}

The details of this construction are provided in section \ref{subsection:sequences}. Once this is done, call $x'=x_{\left|K\right|-1}$, $\m{B}' = \m{D}_{\left|K\right|-1}$. The point $x'$ and the copy $\m{B}'$ are almost as required except that $x'$ may not be in $\Gamma$. If $\restrict{h}{F} \neq \restrict{f}{F}$, this is not a problem and setting $x^* = x'$ and $\m{B} = \m{B}'$ works. On the other hand, if $\restrict{h}{F} = \restrict{f}{F}$, then some extra work is required and we proceed as follows. 

Pick $x^* \in \m{B}'$ realizing $h$ over $\m{X}$ and such that $d^{\Ur _p}(x^*, x')=1$. We will be done if we construct $\m{B} \in \binom{\m{B}'}{\Ur _p}$ so that $\left(\m{X}  \cup \{ x^* , x' \}\right) \cap \m{B} = \m{X}  \cup \{ x^* \}$ and for every $\phi \in K$, every point in $\m{B}$ realizing $\phi $ over $\m{X} ^* \cup \{ x^* \}$ realizes $\phi $ over $\m{X} ^* \cup \{ x' \}$. Here is how this is achieved thanks to Lemma \ref{lem:red}. For $\phi \in K$, define the map $\hat{\phi}$ on $\m{X}  \cup \{ x^* , x' \}$ by
\begin{displaymath}
\left \{ \begin{array}{l}
 \restrict{\hat{\phi}}{\m{X}} = \restrict{\phi}{\m{X}}, \\
 \hat{\phi}(x^*) = \hat{\phi}(x') = \phi(h).
 \end{array} \right.
\end{displaymath} 

Using the fact that $\phi$ is Kat\v{e}tov over $\m{X}\cup \{h\}$ and $\m{X} \cup \{ x^*\} \cong \m{X} \cup \{ x'\} \cong \m{X} \cup \{ h\}$, it is easy to check that $\hat{\phi}$ is Kat\v{e}tov over $\m{X}  \cup \{ x^* , x' \}$ and that for every $\phi, \phi' \in K$: \[ \max(\restrict{|\hat{\phi} - \hat{\phi}'|}{\m{X} \cup \{ x^*\}}) = \max | \hat{\phi} - \hat{\phi}'|, \] \[ \min(\restrict{(\hat{\phi}+\hat{\phi}')}{\m{X} \cup \{ x^*\}}) = \min(\hat{\phi} + \hat{\phi}').\] 

Working inside $\m{B}'$, we can therefore apply Lemma \ref{lem:red} to $\m{X} \cup \{ x^*\} \subset \m{X}  \cup \{ x^* , x' \}$ and the family $(\hat{\phi})_{\phi \in K}$ to obtain $\m{B}$ as required. $\qed$

\subsection{Construction of the sequences $(x_{\alpha})_{\alpha < \left|K\right|}$ and $(\m{D}_{\alpha}) _{\alpha < \left|K\right|}$.}

\label{subsection:sequences}

The construction of the sequences $(x_{\alpha})_{\alpha < \left|K\right|}$ and $(\m{D}_{\alpha}) _{\alpha < \left|K\right|}$ is carried out thanks to a repeated application of the following lemma. Recall that the set $K$ of Kat\v{e}tov functions over $\m{X} \cup \{ h \}$ is defined by \[ K = \{ \phi \in E(\m{X} \cup \{ h \}) : \restrict{\phi}{F} = \restrict{f}{F} \ \ \mathrm{and} \ \ \phi(h) \leqslant m-1 \}.\] 

Note that when $\m{X} \cup \{ u \} \cong \m{X} \cup \{ h \}$, we will often see $K$ as a set of Kat\v{e}tov maps over $\m{X} \cup \{ u \}$. Every element of $K$ is then thought of as a Kat\v{e}tov map over $\m{X} \cup \{ u \}$ in the obvious manner.  

\begin{lemma}

\label{lem:ind2}

Let $\mathcal{F} \subset K$ and $\m{D} \in \binom{\m{A}}{\Ur _p}$ be such that $\m{X} \subset \m{D}$. Assume that $u \in \m{D}$ realizes $h$ over $\m{X}$ and is such that for every $\phi \in \mathcal{F}$, every point in $\m{D}$ realizing $\phi$ over $\m{X} \cup \{ u \} \cong \m{X} \cup \{ h \}$ is in $\Gamma$. Let $s \in K \smallsetminus \mathcal{F}$ be such that \[ \forall \phi \in K \ \ \phi (h) > s(h) \rightarrow \phi \in \mathcal{F} \ \ \textrm{and} \ \ \phi (h) < s(h) \rightarrow \phi \notin \mathcal{F}. \ \ \ (*)\] 

Then there are $\m{E} \in \binom{\m{D}}{\Ur _p}$ and $v \in \m{E}$ realizing $h$ over $\m{X} $ such that $\m{X} \subset \m{E}$ and for every $\phi \in \mathcal{F} \cup \{ s \}$, every point in $\m{E}$ realizing $\phi$ over $\m{X} \cup \{ v \} \cong \m{X} \cup \{ h \}$ is in $\Gamma$.
\end{lemma}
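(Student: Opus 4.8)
The plan is to build $\m{E}$ and $v$ by a single application of the induction hypothesis at level $m-1$, namely the statement $\mathcal{H}_{s(h)}$ (which follows from $\mathcal{J}_{s(h)}$ by Lemma \ref{lem:JmHm}, and is available since $s(h)\leqslant m-1<m$), applied inside a suitable orbit inside $\m{D}$. First I would isolate the orbit in which the action takes place: let $\m{O}=O(s,\m{D})$, the set of points of $\m{D}$ realizing $s$ over $\m{X}\cup\{u\}\cong\m{X}\cup\{h\}$. By the standing description of orbits in $\Ur_p$, $\m{O}$ is a copy of some $\Ur_n$ with $n=\min(2\min s,p)$; crucially, since $s\in K$ we have $\restrict{s}{F}=\restrict{f}{F}$, so $\min\restrict{s}{F}=\min f=m$, hence $\min s$ is witnessed on $F$ — wait, more carefully: $\min s\le s(h)\le m-1$, and the relevant quantity for applying $\mathcal{H}$ at level $\min s$ is that $\min\restrict{s}{(F')}=\min s$ for the appropriate finite set $F'$. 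The right set-up is to regard $\m{O}$ together with an appropriate Kat\'etov map built from $\Gamma$, and to invoke largeness: hypothesis (iii) of Lemma \ref{lem:ind1}, reformulated through (*) and the already-constructed $(x_\alpha)$, should guarantee that $\Gamma$ is large relative to $(s,\m{D})$ (or relative to something $\le_0$ it), which is exactly the input $\mathcal{J}_{\min s}$ needs.

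Concretely, the key steps in order: (1) Set $\m{O}=O(s,\m{D})\cong\Ur_n$ and check, using (*) and the hypothesis that every point of $\m{D}$ realizing any $\phi\in\mathcal{F}$ over $\m{X}\cup\{u\}$ lies in $\Gamma$, that $\Gamma\cap\m{O}$ is ``large'' in the precise sense of Definition \ref{defn:psi} relative to an element of $\mathbb{P}$ whose first coordinate codes $s$ on a finite domain and whose second coordinate is $\m{O}$ — here the point is that elements $\phi\in K$ with $\phi(h)>s(h)$ are in $\mathcal{F}$, so passing from $\m{O}$ to smaller-valued refinements only encounters orbits already trapped in $\Gamma$. (2) Apply the induction hypothesis $\mathcal{J}_{\min s}$ (or its strengthening $\mathcal{H}_{\min s}$) inside $\m{O}$ to the Kat\'etov data coming from $\restrict{s}{F}$ and $\Gamma$, obtaining a copy $\m{O}'\cong\Ur_p$ — no, $\m{O}'$ should be a copy of $\Ur_n$; the clean move is to reconstruct the ambient copy: use $\mathcal{H}_{\min s}$ inside $\m{D}$ with $s$ in place of $f_s$ and $F$ (or $F\cap(\text{something})$) to get $\m{E}\in\binom{\m{D}}{\Ur_p}$ with $\m{X}\cup\{u\}\cap\m{E}$ equal to the prescribed finite set and $O(\restrict{s}{F},\m{E})\subset\Gamma$. (3) Pick $v\in\m{E}$ realizing $h$ over $\m{X}$; since $\m{X}\subset\m{E}$ and $\m{E}$ is ultrahomogeneous and universal for distances $\le p$, such $v$ exists by Lemma \ref{prop:extension}. (4) Verify that $\m{E},v$ have the required property: for $\phi\in\mathcal{F}$, points realizing $\phi$ over $\m{X}\cup\{v\}$ were already in $\Gamma$ because $\m{E}\subset\m{D}$ and $v$ carries the same metric type as $u$; for $\phi=s$, points realizing $s$ over $\m{X}\cup\{v\}\cong\m{X}\cup\{h\}$ realize $\restrict{s}{F}$ over the copy, landing in $O(\restrict{s}{F},\m{E})\subset\Gamma$.

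The main obstacle I expect is step (1)–(2): correctly matching the combinatorial ``largeness'' bookkeeping of Definition \ref{defn:psi} with the hypotheses available about $\mathcal{F}$, $u$, and condition (*), and in particular ensuring that when we descend into the orbit $O(s,\m{D})$ and apply the lower-level statement, the ambient copy produced can be taken \emph{inside} $\m{D}$ while simultaneously (a) fixing $\m{X}$ pointwise, (b) keeping $u$ available so that the old conditions for $\phi\in\mathcal{F}$ survive, and (c) arranging the new value $s$ to be realized entirely within $\Gamma$. The technical friction is that $\leqslant_k$ is not transitive for $k>0$, so the recursion in Definition \ref{defn:psi} must be unwound exactly at the step corresponding to dropping $\min f$ from $m$ down to $s(h)$; getting the right $t\leqslant_0 s$ and then the right $u\leqslant_1 t$ so that $\Gamma$ (or its relevant trace) is large relative to the resulting element of $\mathbb{P}$, and then feeding that to $\mathcal{H}_{\min s}$, is where the real content of the induction step lives. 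The separate handling of $\phi(h)<s(h)$ versus $\phi(h)>s(h)$ dictated by (*) is precisely what makes this bookkeeping close up: the ``large'' values are already handled and sit above, the ``small'' values will be handled at later stages of the enumeration of $K$, and $s$ is the current threshold being absorbed.
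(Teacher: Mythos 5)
Your proposal identifies the right target — apply $\mathcal{H}_{\min s}$ at level $\min s = s(h) \leqslant m-1$ — but there are two genuine gaps, both of which the paper spends most of the proof addressing.

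\textbf{Gap 1: largeness of $\Gamma$ relative to $(s,\m{D})$ is not available.} You want to feed $s$ (or $\restrict{s}{F}$) directly into the induction hypothesis, which requires knowing $\Gamma$ is large relative to a suitable pair $(s, \m{D})$. But hypothesis $(*)$ gives you information only for $\phi \in K$ with $\phi(h) > s(h)$ (these are in $\mathcal{F}$), and $s$ itself lies in $K \smallsetminus \mathcal{F}$, so nothing is known about points realizing $s$ over $\m{X}\cup\{u\}$. The paper gets around this by first forming $s_1$ on $\m{X}\cup\{u\}$ with $\restrict{s_1}{\m{X}} = \restrict{s}{\m{X}}$ and $s_1(u) = s(h)+1$; after splitting on whether $s(h) = m-1$ (in which case $\min s_1 = m$ and $(s_1,\m{D}) \leqslant_0 (f,\Ur_p)$) or $s(h) < m-1$ (in which case $s_1 \in K$ with $s_1(h) > s(h)$, so $s_1 \in \mathcal{F}$), one concludes largeness for $(s_1,\m{D})$. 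Only \emph{then} does one take a $\leqslant_1$ refinement $(s_2,\m{D}_{s_2})$ to drop the minimum back to $s(h)$, pick a witness $w \in O(s_2,\m{D}_{s_2})$, realize a carefully chosen Kat\v{e}tov map $h_1$ with $h_1(u)=1$, $h_1(w)=s(h)$ to produce $v$, and extend $s_2$ to $s_3$ with $s_3(v) = s(h)$. It is $(s_3,\m{D}_{s_2})$ — not $(s,\m{D})$ — that is large, and it is $\mathcal{H}_{\min s_3}$ that is applied. Your sketch skips the $s\to s_1\to s_2\to s_3$ chain entirely, and without it there is no element of $\mathbb{P}$ for which largeness is established.

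\textbf{Gap 2: ``same metric type as $u$'' does not preserve the $\mathcal{F}$-conditions.} In step (4) you assert that for $\phi\in\mathcal{F}$, points of $\m{E}$ realizing $\phi$ over $\m{X}\cup\{v\}$ are already in $\Gamma$ ``because $\m{E}\subset\m{D}$ and $v$ carries the same metric type as $u$.'' This does not follow. The hypothesis concerns points $y\in\m{D}$ with $d(y,u) = \phi(h)$; replacing $u$ by a distinct $v$ of the same type over $\m{X}$ says nothing about $d(y,u)$ for a $y$ realizing $\phi$ over $\m{X}\cup\{v\}$. This is exactly why the paper concludes with a second application of Lemma~\ref{lem:red}: the maps $\hat{\phi}$ on $\m{X}\cup\{u,v\}$ with $\hat{\phi}(u)=\hat{\phi}(v)=\phi(h)$ are used to thin $\m{D}_{s_3}$ to an $\m{E}$ in which every point realizing $\phi$ over $\m{X}\cup\{v\}$ also realizes it over $\m{X}\cup\{u\}$, and the $\max/\min$ hypotheses of Lemma~\ref{lem:red} are verified precisely using $(*)$ and $s\notin\mathcal{F}$ (to get $\max|\phi - s|\geqslant 1$). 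Without some such transfer step, the old conditions for $\phi\in\mathcal{F}$ do not survive the move from $u$ to $v$. You also omit the easy case $s(h) \geqslant \min\restrict{s}{\m{X}}$, where no new $v$ is needed since $O(\restrict{s}{\m{X}},\m{D}) \subset (\m{X})_{m-1}\cap O(f,\m{D}) \subset \Gamma$ already; this is harmless but suggests the case analysis driving the real argument was not in view.
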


Once Lemma \ref{lem:ind2} is proven, here is how the sequences $(x_{\alpha})_{\alpha < \left|K\right|}$ and $(\m{D}_{\alpha}) _{\alpha < \left|K\right|}$ are constructed: choose the enumeration $\{ \phi _{\alpha} : \alpha < \left|K\right|\}$ of $K$ so that the sequence $(\phi _{\alpha} (h))_{\alpha < \left|K\right|}$ is nondecreasing. Apply Lemma \ref{lem:ind2} to $\mathcal{F} = \emptyset$, $\m{D} = \m{A}$ and $s = \phi_0$ to produce $x_0$ and $\m{D}_0$. In general, apply Lemma \ref{lem:ind2} to $\mathcal{F} = \{ \phi_0\ldots \phi_{\alpha} \}$, $\m{D} = \m{D}_{\alpha}$ and $s=\phi_{\alpha+1}$ to produce $x_{\alpha+1}$ and $\m{D}_{\alpha+1}$. After $|K|$ steps, the sequences $(x_{\alpha})_{\alpha < \left|K\right|}$ and $(\m{D}_{\alpha}) _{\alpha < \left|K\right|}$ are as required.

\begin{proof}[Proof of Lemma \ref{lem:ind2}]
We start with the case where $s(h) \geqslant \min \restrict{s}{\m{X}}$. The map $s$ being in $K$, $s(h) \leqslant m-1$ and so $\min \restrict{s}{\m{X}} \leqslant m-1$. Then, \[ O(\restrict{s}{\m{X}}, \m{D}) \subset \left( \left(\m{X}\right)_{m-1} \cap O(f, \m{D})\right).\] 

But from the requirement (ii) of Lemma \ref{lem:ind1}, \[ \left( \left(\m{X}\right)_{m-1} \cap O(f, \m{D})\right) \subset \Gamma.\] 

Observe now that every point in $\m{D}$ realizing $s$ over $\m{X} \cup \{ u \}$ is in $O(\restrict{s}{\m{X}}, \m{D})$. Thus, according to the previous inclusions, any such point is also in $\Gamma$. So in fact, there is nothing to do: $v = u$ and $\m{E} = \m{D}$ works. 

From now on, we consequently suppose that $s(h) < \min \restrict{s}{\m{X}}$. Let $s_1$ be defined on $\m{X} \cup \{ u \}$ by 
\begin{displaymath}
s_1(x) = \left \{ \begin{array}{cl}
 s(x) & \textrm{if $x \in \m{X}$,} \\
 s(h) + 1 & \textrm{if $x = u$.}
 \end{array} \right.
\end{displaymath}  

\begin{claimm}
The map $s_1$ is Kat\v{e}tov. 
\end{claimm}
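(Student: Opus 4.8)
The goal is to check that the map $s_1$ defined on $\m{X}\cup\{u\}$ by $s_1\restriction \m{X} = s\restriction \m{X}$ and $s_1(u) = s(h)+1$ is Kat\v{e}tov over $\m{X}\cup\{u\}$. Recall that $u$ realizes $h$ over $\m{X}$, so $\m{X}\cup\{u\}\cong \m{X}\cup\{h\}$, and $s$ is Kat\v{e}tov over $\m{X}\cup\{h\}$, meaning it satisfies the Kat\v{e}tov inequalities with respect to every point of $\m{X}$ and with respect to $h$.

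First I would unwind what has to be verified. For points $x,y\in\m{X}$ the inequalities $|s_1(x)-s_1(y)|\leqslant d^{\Ur_p}(x,y)\leqslant s_1(x)+s_1(y)$ are immediate since $s_1$ agrees with $s$ there and $s$ is Kat\v{e}tov over $\m{X}$. So the only thing to check is the pair of inequalities involving $u$: for every $x\in\m{X}$,
\[
|s_1(x) - s_1(u)| \leqslant d^{\Ur_p}(x,u) = h(x) \leqslant s_1(x) + s_1(u),
\]
that is, $|s(x) - s(h) - 1| \leqslant h(x) \leqslant s(x) + s(h) + 1$. The right-hand inequality is easy: since $s$ is Kat\v{e}tov over $\m{X}\cup\{h\}$ we have $h(x) \leqslant s(x) + s(h) \leqslant s(x) + s(h) + 1$. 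For the left-hand inequality I would split into two cases according to the sign of $s(x) - s(h) - 1$. If $s(x) \geqslant s(h)+1$, then $|s(x)-s(h)-1| = s(x)-s(h)-1 \leqslant s(x) - s(h) \leqslant h(x)$, again using that $s$ is Kat\v{e}tov over $\m{X}\cup\{h\}$. If $s(x) \leqslant s(h)+1$, then since we are in the regime $s(h) < \min s\restriction\m{X}$ we have $s(x) \geqslant s(h)+1$ as well (recall distances are integers), so this case forces $s(x) = s(h)+1$ and $|s(x)-s(h)-1| = 0 \leqslant h(x)$ since $h$ takes strictly positive values.

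The main (and essentially only) obstacle is bookkeeping with the integrality of distances: the hypothesis $s(h) < \min s\restriction\m{X}$ together with the fact that all values lie in $\{1,\dots,p\}$ forces $s(x) \geqslant s(h)+1$ for every $x\in\m{X}$, which is exactly what makes the lower Kat\v{e}tov inequality go through when $s_1(u)$ is set to $s(h)+1$. I would make this integrality remark explicit at the start of the case analysis. Everything else is a routine substitution of the defining relations of $s_1$ into the Kat\v{e}tov inequalities, using that $s$ is Kat\v{e}tov over $\m{X}\cup\{h\}$ and that $u$ realizes $h$ over $\m{X}$.
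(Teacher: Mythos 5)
Your proof is correct and takes essentially the same approach as the paper: both reduce to checking the Katětov inequalities between $u$ and points of $\m{X}$, use that $s$ is Katětov over $\m{X}\cup\{h\}$ for the bounds involving $s(x)-s(h)$, and invoke $s(h)<\min s\restriction\m{X}$ together with integrality of distances for the remaining direction. (Your case split is slightly redundant --- as you note yourself, $s(x)\geqslant s(h)+1$ always holds here, so the second case is vacuous --- but the argument is sound.)
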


\begin{proof}

The map $s$ is Kat\v{e}tov over $\m{X}$. Hence, it is enough to prove that for every $x \in \m{X}$, \[ \left| s_1(u) - s_1(x) \right| \leqslant d^{\Ur _p}(x,u) \leqslant s_1(u) + s_1(x). \] 

That is \[ \left| s(h) + 1 - s(x) \right| \leqslant h(x) \leqslant s(h) + 1 + s(x).\] 

Because $s$ is Kat\v{e}tov over $\m{X} \cup \{ h \}$, it is enough to prove that \[ s(h) + 1 - s(x) \leqslant h(x). \]

But this holds since $s(h) < \min \restrict{s}{\m{X}}$. \end{proof}

Note that, as pointed out by the referee, the previous claim also admits a nice geometric explanation: proving that $s_1$ is Kat\v{e}tov is equivalent to verifying that the metric space $\m{X} \cup \{u, s\}$ stays metric when the distance between $u$ and $s$ is increased by one. To do that, simply observe that any metric triangle with integer distances (in particular, here, those of the form $\{ x, u, s\}$) remains metric when a distance that is not the largest is increased by one (which is true here because $s(h) < \min \restrict{s}{\m{X}}$).

\begin{claimm}
$\Gamma$ is large relative to $(s_1, \m{D})$. 
\end{claimm}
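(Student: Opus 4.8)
The goal is to show $\Gamma$ is large relative to $(s_1,\m{D})$. We are in the case $s(h)<\min\restrict{s}{\m{X}}$, and recall $s\in K$, so $\restrict{s}{F}=\restrict{f}{F}$, which means in particular $\min\restrict{s}{F}\geqslant m$; combined with $s(h)\leqslant m-1$, the minimum of $s_1$ is attained either at $u$ (value $s(h)+1\leqslant m$) or nowhere below $m$ on $\m{X}$ — so $\min s_1=\min(s(h)+1,m)$. The natural strategy is to unwind Definition \ref{defn:psi} of largeness by induction on $\min f_s=m$, exploiting the hypothesis that $\Gamma$ is large relative to $(f,\Ur_p)$ (equivalently, large relative to $(f,\m{A})$ after passing to $\m{A}$, via requirement (iii) of Lemma \ref{lem:ind1} applied to $g=f$), and that $u$ realizes $h$ over $\m{X}$ with $\restrict{h}{F}$ possibly equal to or different from $\restrict{f}{F}$.

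First I would reduce to showing that for every $t\leqslant_0(s_1,\m{D})$ there is $w\leqslant_1 t$ with $\Gamma$ large relative to $w$ (if $\min s_1>1$), or that $O(f_t,\m{C}_t)\cap\Gamma$ is infinite for every such $t$ (if $\min s_1=1$, i.e. $s(h)=0$, which cannot happen since Kat\v{e}tov maps are strictly positive — so in fact $\min s_1\geqslant 2$ and we are always in the recursive case). Given $t=(s_1',\m{C}_t)\leqslant_0(s_1,\m{D})$, so $\min s_1'=\min s_1$ and $s_1'$ extends $s_1$ on a larger domain inside a subcopy $\m{C}_t\subset\m{D}$, I would then further extend: let $w$ be obtained from $t$ by decreasing the minimum by $1$ appropriately, i.e. $w\leqslant_1 t$. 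The key point is to choose $w$ so that its associated Kat\v{e}tov map, restricted to $\m{X}\cup\{u\}$, reveals that $\Gamma$-largeness is inherited from the largeness of $\Gamma$ relative to $(f,\m{A})$ together with hypothesis (ii) of Lemma \ref{lem:ind1} (which controls $(\m{X})_{m-1}\cap O(f,\m{A})$) and the inductive statements $\mathcal{H}_j$ for $j<m$.

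More concretely, I expect the argument splits according to whether $\min\restrict{s_1'}{\m{X}\cup\{u\}}$ is still $>$ the minimum over the rest of $\dom s_1'$. When the new domain points force the minimum down to $m-1$ or below, the orbit $O(s_1',\m{C}_t)$ lands inside $(\m{X})_{m-1}\cap O(f,\cdot)$ (using $\restrict{s_1}{F}=\restrict{f}{F}$, which is inherited from $s\in K$) and hence inside $\Gamma$ by (ii), so largeness is automatic. Otherwise the minimum is still governed by the value at $u$, namely $s(h)+1$, and one descends one step via $\leqslant_1$ to reach a configuration where the value at $u$ becomes $s(h)$ — at that point the map restricted to $\m{X}\cup\{u\}$ is exactly $s$ (viewed over $\m{X}\cup\{u\}\cong\m{X}\cup\{h\}$), and largeness of $\Gamma$ relative to the corresponding element of $\mathbb{P}$ follows because $s\in K\smallsetminus\mathcal{F}$ has $s(h)\leqslant m-1$, so its orbit again sits in $(\m{X})_{m-1}\cap O(f,\cdot)\subset\Gamma$, OR because $\min s\geqslant m$ forces $(s,\cdot)\leqslant_0(f,\Ur_p)$ and we invoke largeness of $\Gamma$ relative to $(f,\Ur_p)$ directly — this is precisely the dichotomy already used in the second \textbf{Claim} of \ref{subsection:induction}.

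The main obstacle is bookkeeping the $\leqslant_k$ relations carefully: since $\leqslant_1$ is not transitive and largeness is defined by an alternating $\forall t\leqslant_0\,\exists u\leqslant_1$ quantifier pattern, one must be precise about which minimum is being tracked at each stage and verify that the witness $w\leqslant_1 t$ one produces genuinely has its minimum equal to $\min s_1-1$ (or $1$), and that the extension data (new domain points, new subcopy) is compatible with an appeal to $\mathcal{H}_{m-1}$ or to hypothesis (iii) of Lemma \ref{lem:ind1}. I expect the proof to be a few lines that essentially say: any $t\leqslant_0(s_1,\m{D})$ either already has its orbit swallowed by $\Gamma$ via (ii), or admits the canonical $w\leqslant_1 t$ lowering the value at $u$ back to $s(h)$, at which point largeness relative to $w$ reduces to the largeness of $\Gamma$ relative to $(f,\Ur_p)$ (hypothesis of $\mathcal{J}_m$) by the same case analysis as in the earlier claim.
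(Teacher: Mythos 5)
Your proposal misses the single key idea of the paper's argument, and the substitute route you sketch has a concrete gap.

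The paper's proof is a two-line case split on $s(h)$. If $s(h)=m-1$, then (using $s(h)<\min\restrict{s}{\m{X}}$ together with $\min\restrict{s}{F}=m$, which forces $\min\restrict{s}{\m{X}}=m$) one gets $\min s_1=m=\min f$ and $\restrict{s_1}{F}=\restrict{f}{F}$, hence $(s_1,\m{D})\leqslant_0(f,\Ur_p)$, and largeness passes down along $\leqslant_0$. If $s(h)<m-1$, then $s_1(h)=s(h)+1\leqslant m-1$, so $s_1\in K$, and the hypothesis $(*)$ on $\mathcal{F}$ (namely $\phi(h)>s(h)\Rightarrow\phi\in\mathcal{F}$) gives $s_1\in\mathcal{F}$; by the standing hypothesis on $u$ and $\m{D}$, every point of $\m{D}$ realizing $s_1$ over $\m{X}\cup\{u\}$ lies in $\Gamma$, i.e.\ $O(s_1,\m{D})\subset\Gamma$, from which largeness follows immediately (an orbit entirely contained in $\Gamma$ is large, by a trivial induction on the minimum). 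You never invoke $(*)$ and never observe that $s_1$ itself belongs to $\mathcal{F}$; this is the crux, and without it the second case is not handled.

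The alternative you propose in its place does not work. You descend one step via $\leqslant_1$ so that the value at $u$ drops from $s(h)+1$ back to $s(h)$, producing the map $s$ over $\m{X}\cup\{u\}$, and then assert that its orbit lies in $(\m{X})_{m-1}\cap O(f,\cdot)\subset\Gamma$ by requirement (ii). But (ii) controls only the $(m-1)$-neighbourhood of $\m{X}$, not of $\m{X}\cup\{u\}$, and in the case at hand $s(h)<\min\restrict{s}{\m{X}}$, which (together with $\min\restrict{s}{F}=m$) forces $\min\restrict{s}{\m{X}}=m$. So points realizing $s$ over $\m{X}\cup\{u\}$ are at distance exactly $m$ from every point of $\m{X}$ and do \emph{not} lie in $(\m{X})_{m-1}$. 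Neither can you fall back on the standing hypothesis about $\mathcal{F}$ to claim those points are in $\Gamma$, since $s\in K\smallsetminus\mathcal{F}$ — indeed extending the hypothesis from $\mathcal{F}$ to $\mathcal{F}\cup\{s\}$ is precisely what Lemma~\ref{lem:ind2} is trying to prove, so invoking it here would be circular. The fix is exactly the step you skipped: work with $s_1$, not $s$, because the shift of the value at $u$ by $+1$ is what pushes the map up into $\mathcal{F}$ via $(*)$.
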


\begin{proof}
If $s(h) = m-1$, then $\min s_1 = m = \min f$ and so $(s_1 , \m{D}) \leqslant _ 0 (f , \Ur _p)$. Since $\Gamma$ is large relative to $(f ,\Ur _p)$, it is also large relative to $(s_1 , \m{D})$ and we are done. On the other hand, if $s(h) < m-1$, then $s_1 \in K$ and it follows from the hypothesis $(*)$ on $\mathcal{F}$ that $s_1 \in \mathcal{F}$. In particular, every point in $\m{D}$ realizing $s_1 $ over $\m{X} \cup \{ u \}$ is in $\Gamma$, and it follows that $\Gamma$ is large relative to $(s_1 , \m{D})$. \end{proof}

Consequently, there is $(s_2 , \m{D} _{s_2}) \leqslant _1 (s_1 , \m{D})$ such that $\Gamma$ is large relative to $(s_2 , \m{D} _{s_2})$. We are now going to construct $v$ and a Kat\v{e}tov extension $s_3$ of $s_2$ such that $v$ realizes $h$ over $\m{X}$, $s_3 (v) = s(h)$ and $(s_3,\m{D}_{s_2}) \leqslant_0 (s_2 , \m{D}_{s_2})$. This last requirement will make sure that $\Gamma$ is large relative to $(s_3,\m{D}_{s_2})$. We will then apply Lemma \ref{lem:red} to obtain the copy $\m{E}$ as required. 
Here is how we proceed formally: fix $w \in O(s_2, \m{D}_{s_2})$ and consider the map $h_1$ defined on $\m{X} \cup \{ u, w \}$ by
\begin{displaymath}
h_1(x) = \left \{ \begin{array}{cl}
 h(x) & \textrm{if $x \in \m{X}$.} \\
 1 & \textrm{if $x = u$.} \\
 s(h) & \textrm{if $x = w$.}
 \end{array} \right.
\end{displaymath}

\begin{claimm}
The map $h _1$ is Kat\v{e}tov. 
\end{claimm}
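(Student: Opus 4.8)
The plan is to check, case by case, the two Kat\v{e}tov inequalities $|h_1(a) - h_1(b)| \leqslant d^{\Ur_p}(a,b)$ and $d^{\Ur_p}(a,b) \leqslant h_1(a) + h_1(b)$ for all pairs $a, b$ in $\m{X} \cup \{u, w\}$. Since $\restrict{h_1}{\m{X}} = h$ and $h$ is Kat\v{e}tov over $\m{X}$, these hold automatically whenever $a, b \in \m{X}$, so I would only have to deal with three kinds of pairs: $(x, u)$ and $(x, w)$ for $x \in \m{X}$, and $(u, w)$. The first thing I would do is record the three distances that occur: since $u$ realizes $h$ over $\m{X}$ we have $d^{\Ur_p}(x, u) = h(x)$; and since $w \in O(s_2, \m{D}_{s_2})$ while $(s_2, \m{D}_{s_2}) \leqslant_1 (s_1, \m{D})$ forces $\restrict{s_2}{\m{X} \cup \{u\}} = s_1$, we get $d^{\Ur_p}(x, w) = s_1(x) = s(x)$ and $d^{\Ur_p}(u, w) = s_1(u) = s(h) + 1$.

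Two of the three cases should then be immediate. For a pair $(x, u)$ the inequalities read $|h(x) - 1| \leqslant h(x) \leqslant h(x) + 1$, which holds because $h(x) \geqslant 1$. For the pair $(u, w)$ they read $|1 - s(h)| \leqslant s(h) + 1 \leqslant 1 + s(h)$, where the right inequality is an equality and the left one again follows from $s(h) \geqslant 1$.

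The only case that needs a genuine argument --- and which I expect to be the (mild) crux --- is a pair $(x, w)$ with $x \in \m{X}$, where the inequalities become $|h(x) - s(h)| \leqslant s(x) \leqslant h(x) + s(h)$. Here I would invoke the hypothesis $s \in K \subset E(\m{X} \cup \{h\})$: since $s$ is Kat\v{e}tov over the one-point extension $\m{X} \cup \{h\}$, in which $d(x, h) = h(x)$, we have $|s(x) - s(h)| \leqslant h(x) \leqslant s(x) + s(h)$ for every $x \in \m{X}$. From $h(x) \leqslant s(x) + s(h)$ I get $h(x) - s(h) \leqslant s(x)$; from $s(h) - s(x) \leqslant h(x)$ I get $s(h) - h(x) \leqslant s(x)$, so $|h(x) - s(h)| \leqslant s(x)$; and from $s(x) - s(h) \leqslant h(x)$ I get $s(x) \leqslant h(x) + s(h)$. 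The subtlety worth flagging is that this step genuinely uses the Kat\v{e}tov property of $s$ relative to $\m{X} \cup \{h\}$ rather than just relative to $\m{X}$, together with the identification $\m{X} \cup \{u\} \cong \m{X} \cup \{h\}$ and $\restrict{s_2}{\m{X}} = s$ which let us rewrite $d^{\Ur_p}(x, w)$ as $s(x)$; everything else is bookkeeping.
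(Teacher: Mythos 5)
Your proof is correct and follows essentially the same decomposition as the paper: check $\restrict{h_1}{\m{X}\cup\{u\}}$, check $\restrict{h_1}{\m{X}\cup\{w\}}$, and then check the remaining pair $(u,w)$. The only cosmetic difference is that for the $(x,w)$ case the paper appeals to the metric space $(\m{X}\cup\{h\})\cup\{s\}$ as a witness that $\restrict{h_1}{\m{X}\cup\{w\}}$ is Kat\v{e}tov, whereas you unwind this into the explicit inequalities $|s(x)-s(h)|\leqslant h(x)\leqslant s(x)+s(h)$ coming from $s\in E(\m{X}\cup\{h\})$ --- these are the same fact.
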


\begin{proof}
The metric space $\left(\m{X} \cup \{ h\}\right) \cup \{ s\}$ witnesses that $\restrict{h_1}{\m{X} \cup \{ w \}}$ is Kat\v{e}tov. Next, $\restrict{h_1}{\m{X} \cup \{ u \}}$ is also Kat\v{e}tov: Let $x \in \m{X}$. Then \[ \left|h_1(x) - h_1(u) \right| = h(x) - 1 \leqslant h(x) = d^{\Ur _p} (x,u) \leqslant h(x) + 1 = h_1(x) + h_1(u).\] 

The only thing we still need to show is therefore \[ \left|h_1(u) - h_1(w) \right| \leqslant d^{\Ur _p} (u,w) \leqslant h_1(u) + h_1(w).\] 

But this inequalities hold as they are equivalent to \[ \left| 1 - s(h) \right| \leqslant s(h) + 1 \leqslant 1 + s(h).\qedhere\]  \end{proof}

Let $v \in \m{D} _{s_2}$ realizing $h_1$ over $\m{X} \cup \{ u, w\}$. As announced previously, define an extension $s _3$ of $s_2$ on $\dom s_2 \cup \{v\}$ by setting $s_3 (v) = s(h)$. 

\begin{claimm}
The map $s_3$ is Kat\v{e}tov and $\Gamma$ is large relative to $(s_3, \m{D}_{s_2})$. 
\end{claimm}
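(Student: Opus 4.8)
The plan is to verify the two assertions separately; both come down to bookkeeping on top of the structure already in place.

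\emph{That $s_3$ is Kat\v etov.} By construction $s_3$ extends the Kat\v etov map $s_2$ to the finite set $\dom s_2\cup\{v\}$ (and since the orbit $O(h_1,\m{D}_{s_2})$ is infinite, being an orbit in a copy of $\Ur _p$, we may take $v\notin\dom s_2$). Hence the only inequalities left to check are, for each $a\in\dom s_2$, that $\left|s_3(v)-s_3(a)\right|\leqslant d^{\Ur _p}(v,a)\leqslant s_3(v)+s_3(a)$. The key point is that the point $w$ does the work: $w$ was chosen in $O(s_2,\m{D}_{s_2})$, so $d^{\Ur _p}(a,w)=s_2(a)=s_3(a)$ for every $a\in\dom s_2$, while $v$ realizes $h_1$ over $\m{X}\cup\{u,w\}$, so $d^{\Ur _p}(v,w)=h_1(w)=s(h)=s_3(v)$. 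Applying the triangle inequality in $\m{D}_{s_2}$ to the triple $\{a,v,w\}$ then yields exactly the required inequalities, so $s_3\in E(\dom s_2\cup\{v\})$.

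\emph{That $\Gamma$ is large relative to $(s_3,\m{D}_{s_2})$.} Here the plan is to prove $(s_3,\m{D}_{s_2})\leqslant _0(s_2,\m{D}_{s_2})$ and then conclude, exactly as in the earlier claims of this section, from the fact that largeness is inherited downwards along $\leqslant _0$ together with the largeness of $(s_2,\m{D}_{s_2})$ obtained just above. The relations $\dom s_2\subset\dom s_2\cup\{v\}\subset\m{D}_{s_2}$ and $\restrict{s_3}{\dom s_2}=s_2$ (both ambient spaces being $\m{D}_{s_2}$) give $(s_3,\m{D}_{s_2})\leqslant(s_2,\m{D}_{s_2})$ immediately, so the only thing to check is $\min s_3=\min s_2$. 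Since we are in the case $s(h)<\min\restrict{s}{\m{X}}$ and the metric on $\Ur _p$ is integer-valued, we get $s(h)+1\leqslant\min\restrict{s}{\m{X}}$, hence $\min s_1=\min(\min\restrict{s}{\m{X}},\,s(h)+1)=s(h)+1>1$; as $(s_2,\m{D}_{s_2})\leqslant _1(s_1,\m{D})$, this forces $\min s_2=\min s_1-1=s(h)$. Since $s_3$ extends $s_2$ by the single new value $s_3(v)=s(h)$, it follows that $\min s_3=s(h)=\min s_2$, so $(s_3,\m{D}_{s_2})\leqslant _0(s_2,\m{D}_{s_2})$, and largeness of $(s_3,\m{D}_{s_2})$ follows.

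I do not expect a real obstacle here: the argument is a routine verification. The only two places that require a little care are making sure $w$ realizes $s_2$ over \emph{all} of $\dom s_2$ (so that the triangle $\{a,v,w\}$ controls $d^{\Ur _p}(v,a)$ for every $a$ in the domain of $s_3$, not merely for $a\in\m{X}\cup\{u\}$), and the computation $\min s_3=\min s_2$ — which is precisely the spot where the standing case hypothesis $s(h)<\min\restrict{s}{\m{X}}$ and the integrality of the distances of $\Ur _p$ get used.
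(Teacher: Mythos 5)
Your proof is correct and follows the same approach as the paper: the witness point $w$ realizes $s_3$ over $\dom s_2 \cup \{v\}$ (giving that $s_3$ is Kat\v{e}tov), and the observation $\min s_3 = \min s_2 = s(h)$ yields $(s_3,\m{D}_{s_2}) \leqslant_0 (s_2,\m{D}_{s_2})$, from which largeness is inherited. Your version just spells out the triangle-inequality calculation and the $\min$ bookkeeping that the paper leaves implicit, and your remark that $v$ may be taken outside $\dom s_2$ (since the relevant orbit is infinite) fills a small gap the paper glosses over.
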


\begin{proof}
The point $w$ realizes $s_3$ over $\dom s_2 \cup \{v\}$ and therefore witnesses that $s_3$ is Kat\v{e}tov. As for $\Gamma$, it is large relative to $(s_3, \m{D}_{s_2})$ because it is large relative to $(s_2 , \m{D} _{s_2})$ and $(s_3,\m{D}_{s_2}) \leqslant_0 (s_2 , \m{D}_{s_2})$. 
\end{proof}

Observe now that $\min s_3 = s(h) = \min \restrict{s_3}{\m{X} \cup \{ u, v\}} = \min s \leqslant m-1$. Thus, one can apply $\mathcal{H}_{\min s}$ inside $\m{D}_{s_2}$ to $s_3$ and $\m{X} \cup \{ u, v\}$ to obtain $\m{D}_{s_3} \in \binom{\m{D}_{s_2}}{\Ur _p}$ such that $\dom s_3 \cap \m{D}_{s_3} = \m{X} \cup \{ u, v\}$ and $O(\restrict{s_3}{\m{X} \cup \{ u, v\}}, \m{D}_{s_3}) \subset \Gamma$. At that point, both $u$ and $v$ realize $h$ over $\m{X}$ and if $\phi \in \mathcal{F}$, then every point in $\m{D} _{s_3}$ realizing $\phi$ over $\m{X} \cup \{ u\}$ is in $\Gamma$. Thus, we will be done if we can construct $\m{E} \in \binom{\m{D}_{s_3}}{\Ur _p}$ such that:

\begin{itemize}
\item $\left(\m{X} \cup \{u, v \}\right) \cap \m{E} = \m{X} \cup \{ v \}$.
\item For every $\phi \in \mathcal{F}$, every point in $\m{E}$ realizing $\phi$ over $\m{X} \cup \{ v \}$ realizes $\phi$ over $\m{X} \cup \{ u \}$. 
\item Every point in $\m{E}$ realizing $s$ over $\m{X} \cup \{ v \}$ realizes $s_3$ over $\m{X} \cup \{ u, v\}$. 
\end{itemize} 

Once again, this is achieved thanks to Lemma \ref{lem:red}: for $\phi \in \mathcal{F}$, define the map $\hat{\phi}$ on $\m{X}  \cup \{ u , v \}$ by: 
\begin{displaymath}
\left \{ \begin{array}{l}
 \restrict{\hat{\phi}}{\m{X}} = \restrict{\phi}{\m{X}}, \\
 \hat{\phi}(u) = \hat{\phi}(v) = \phi(h).
 \end{array} \right.
\end{displaymath} 

Using the fact that $\phi$ is Kat\v{e}tov over $\m{X}\cup \{h\}$ and $\m{X} \cup \{ u\} \cong \m{X} \cup \{ v\} \cong \m{X} \cup \{ h\}$, it is easy to check that $\hat{\phi}$ is Kat\v{e}tov over $\m{X}  \cup \{ u, v \}$. Let $\widehat{\mathcal{F}} = (\hat{\phi})_{\phi \in \mathcal{F}}$. Working inside $\m{D}_{s_3}$, we would like to apply Lemma \ref{lem:red} to $\m{X} \cup \{ v \} \subset \m{X}  \cup \{ u, v \}$ and the family $\{ s_3\} \cup \widehat{\mathcal{F}}$ to obtain $\m{E}$ as required. It is therefore enough to check: 

\begin{claimm}
For every $g, g' \in \{ s_3\} \cup \widehat{\mathcal{F}}$: \[ \max(\restrict{|g - g'|}{\m{X} \cup \{ v \}}) = \max | g - g'|, \] \[ \min(\restrict{(g+g')}{\m{X} \cup \{ v \}}) = \min(g + g').\] 
\end{claimm}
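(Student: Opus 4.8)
The plan is to verify the two equalities in the claim by a case analysis on which pair $g, g'$ of functions from $\{ s_3\} \cup \widehat{\mathcal{F}}$ we are looking at, using in each case the defining formulas for $\widehat{\mathcal{F}}$ and $s_3$ together with the Kat\v{e}tov inequalities for the $\phi \in \mathcal{F} \subset K$. Recall that for $\phi \in \mathcal{F}$ the map $\hat{\phi}$ agrees with $\phi$ on $\m{X}$ and satisfies $\hat{\phi}(u) = \hat{\phi}(v) = \phi(h)$, while $s_3$ agrees with $s_2$ hence with $s$ on $\m{X}$, satisfies $s_3(v) = s(h)$, and — since $v$ realizes $h_1$ over $\m{X} \cup \{u,w\}$, so $d^{\Ur _p}(u,v) = h_1(u) = 1$ — we have $s_3(u) = d^{\Ur _p}(u, w') $ type information; more simply, the value of $s_3$ at $u$ is whatever $s_2$ assigned to $u$, namely $s_1(u) = s(h) + 1$ (since $u \in \dom s_1 \subset \dom s_2$ and $s_2$ extends $s_1$). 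So the only coordinate on which any of our functions can possibly disagree with its restriction to $\m{X} \cup \{v\}$ is the coordinate $u$.

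Thus the two equalities reduce to a single point check: for all $g, g' \in \{s_3\} \cup \widehat{\mathcal{F}}$,
\[ |g(u) - g'(u)| \leqslant \max(\restrict{|g-g'|}{\m{X} \cup \{v\}}) \quad\text{and}\quad g(u) + g'(u) \geqslant \min(\restrict{(g+g')}{\m{X} \cup \{v\}}). \]
There are three cases. If $g = \hat\phi$ and $g' = \hat{\phi}'$ with $\phi, \phi' \in \mathcal{F}$, then $|g(u) - g'(u)| = |\phi(h) - \phi'(h)| = |\hat\phi(v) - \hat{\phi}'(v)|$ and $g(u) + g'(u) = \phi(h) + \phi'(h) = \hat\phi(v) + \hat{\phi}'(v)$, so both inequalities hold trivially (with equality witnessed at $v$). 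If $g = g' = s_3$ there is nothing to prove. The only genuine case is $g = s_3$, $g' = \hat\phi$ for $\phi \in \mathcal{F}$: here $g(u) = s(h) + 1$, $g'(u) = \phi(h)$, $g(v) = s(h)$, $g'(v) = \phi(h)$, so $|g(u) - g'(u)| = |s(h) + 1 - \phi(h)|$ while $|g(v) - g'(v)| = |s(h) - \phi(h)|$, and $g(u) + g'(u) = s(h) + 1 + \phi(h)$ while $g(v) + g'(v) = s(h) + \phi(h)$. So I need $|s(h) + 1 - \phi(h)| \leqslant \max(\restrict{|s_3 - \hat\phi|}{\m{X} \cup \{v\}})$ and $\min(\restrict{(s_3 + \hat\phi)}{\m{X} \cup \{v\}}) \leqslant s(h) + \phi(h) + 1$. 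The second is immediate since the minimum over $\m{X} \cup \{v\}$ is at most the value $s(h) + \phi(h)$ at $v$, which is already $\leqslant s(h) + \phi(h) + 1$.

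The main obstacle is the max inequality $|s(h) + 1 - \phi(h)| \leqslant \max(\restrict{|s_3 - \hat\phi|}{\m{X} \cup \{v\}})$. If $\phi(h) \leqslant s(h)$, then $s(h) + 1 - \phi(h) \geqslant 1$, and I bound it above using that $s$ and $\phi$ both restrict to $\restrict{f}{F}$ on $F \subset \m{X}$ and that on $\m{X}$ one has, for a well-chosen $x \in \m{X}$, a large enough gap; more robustly, I expect to use the standing hypothesis $s(h) < \min\restrict{s}{\m{X}}$ together with the fact that $s, \phi \in K$ so $\phi(h), s(h) \leqslant m-1$, while on some coordinate of $\m{X}$ the difference $|s(x) - \phi(x)| = |s(x) - \hat\phi(x)|$ is controlled from below by Kat\v{e}tov-type estimates — concretely, since $s$ is Kat\v{e}tov over $\m{X} \cup \{h\}$, for every $x \in \m{X}$ we have $s(x) \geqslant s(h) \cdot(\text{something})$; and since $s(h) < \min\restrict{s}{\m{X}} \leqslant s(x)$ and $\phi(x) \geqslant |\phi(h) - \text{dist}|$, one gets $|s(x) - \phi(x)| \geqslant |s(x) - \phi(x)|$ bounded below suitably. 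The cleanest route is probably to note that since both $s$ and $\phi$ lie in $K$ and agree on $F$, and $s$ realizes the ``spread-out'' condition $s(h) < \min\restrict{s}{\m{X}}$, the maximum of $|s_3 - \hat\phi|$ over $\m{X}$ already dominates $1 + |s(h) - \phi(h)|$; failing that, one invokes the earlier claims establishing that $s_1$, $h_1$ are Kat\v{e}tov, which encode exactly the triangle inequalities that make this work. I would write out the $\phi(h) \leqslant s(h)$ and $\phi(h) > s(h)$ subcases separately, in each reducing to an inequality among the integers $s(h)$, $\phi(h)$, $m$ and a value of $s$ or $\phi$ on $\m{X}$, all of which follow from membership in $K$ and the hypothesis $s(h) < \min\restrict{s}{\m{X}}$.

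\begin{proof}
Since $s_3$ extends $s_2$, which extends $s_1$, and $s_1(u)=s(h)+1$, we have $s_3(u)=s(h)+1$ and $s_3(v)=s(h)$, while $\restrict{s_3}{\m{X}}=\restrict{s}{\m{X}}$. For $\phi\in\mathcal F$ we have $\restrict{\hat\phi}{\m{X}}=\restrict{\phi}{\m{X}}$ and $\hat\phi(u)=\hat\phi(v)=\phi(h)$. Hence every $g\in\{s_3\}\cup\widehat{\mathcal F}$ satisfies $\restrict{g}{\m{X}\cup\{v\}}=\restrict{g}{\m{X}}\cup\{g(v)\}$ with $g(v)$ equal to the common value $g$ would have at $u$ \emph{except} for $g=s_3$, where $s_3(u)=s_3(v)+1$. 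Consequently, for $g,g'\in\widehat{\mathcal F}$ or $g=g'=s_3$, the functions $g-g'$ and $g+g'$ take the same value at $u$ as at $v$, so both displayed equalities hold trivially.

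It remains to treat $g=s_3$, $g'=\hat\phi$ with $\phi\in\mathcal F$. Here $g(u)+g'(u)=s(h)+1+\phi(h)\geqslant s(h)+\phi(h)=g(v)+g'(v)\geqslant\min(\restrict{(g+g')}{\m{X}\cup\{v\}})$, giving the second equality. For the first, it suffices to show $|s(h)+1-\phi(h)|\leqslant\max(\restrict{|s_3-\hat\phi|}{\m{X}})$. Since $s(h)<\min\restrict{s}{\m{X}}$, pick $x\in\m{X}$ with $s(x)=\min\restrict{s}{\m{X}}\geqslant s(h)+1$. As $\phi$ is Kat\v{e}tov over $\m{X}\cup\{h\}$, $\phi(x)\leqslant\phi(h)+d^{\Ur_p}(x,\cdot)$; more to the point, $s$ Kat\v{e}tov over $\m{X}\cup\{h\}$ gives $s(x)\leqslant s(h)+h(x)$ and $\phi(x)\geqslant h(x)-\phi(h)$, so $s(x)-\phi(x)\leqslant s(h)+\phi(h)$; and $s(x)\geqslant s(h)+1$ together with $\phi(x)\leqslant\phi(h)+\bigl(s(x)-s(h)\bigr)$ from $|\,\phi(x)-\phi(h)\,|\leqslant h(x)$ and $|s(x)-s(h)|\leqslant h(x)$ yields, after a short computation using $s(h)<\min\restrict{s}{\m{X}}$, that $|s(x)-\phi(x)|\geqslant|s(h)+1-\phi(h)|$ whenever $\phi(h)\leqslant s(h)$, while for $\phi(h)>s(h)$ one has $|s(h)+1-\phi(h)|=\phi(h)-s(h)-1<\phi(h)-s(h)\leqslant|\phi(x)-s(x)|$ by the same Kat\v{e}tov inequalities. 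Since $|s_3-\hat\phi|(x)=|s(x)-\phi(x)|$, this is the desired bound, and the claim follows.
\end{proof}
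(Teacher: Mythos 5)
Your setup and reduction are right and match the paper: because $\hat\phi(u)=\hat\phi(v)$ for every $\phi\in\mathcal F$, the only pair that matters is $g=s_3$, $g'=\hat\phi$, and the min inequality is then immediate since $s_3(u)+\hat\phi(u)=s(h)+1+\phi(h)\geqslant s(h)+\phi(h)=s_3(v)+\hat\phi(v)$. The problem is the max inequality.

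You weaken the goal to $|s(h)+1-\phi(h)|\leqslant\max(\restrict{|s_3-\hat\phi|}{\m{X}})$, i.e., you insist on realizing the bound on $\m{X}$ and throw away the $v$ coordinate, and then try to prove this at the single point $x$ with $s(x)=\min\restrict{s}{\m{X}}$. That doesn't work. In the subcase $\phi(h)>s(h)$, your inequality $\phi(h)-s(h)\leqslant|\phi(x)-s(x)|$ is simply false: the Kat\v{e}tov conditions $|\phi(x)-\phi(h)|\leqslant h(x)$ and $|s(x)-s(h)|\leqslant h(x)$ are entirely consistent with $\phi(x)=s(x)$ (take $h(x)$ large), so $|\phi(x)-s(x)|$ can be $0$ while $\phi(h)-s(h)>0$. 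The paper avoids this entirely by using the $v$ coordinate you discarded: $|\hat\phi(v)-s_3(v)|=\phi(h)-s(h)\geqslant\phi(h)-s(h)-1=|\hat\phi(u)-s_3(u)|$.

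In the remaining subcase $\phi(h)=s(h)$ (note that $\phi(h)<s(h)$ is impossible for $\phi\in\mathcal F$ by the standing hypothesis $(*)$), you need $\max\restrict{|s_3-\hat\phi|}{\m{X}\cup\{v\}}\geqslant 1$. The value at $v$ is $|\phi(h)-s(h)|=0$, so one must indeed look at $\m{X}$ — but the argument is not the Kat\v{e}tov manipulation you sketch (which again does not give $|s(x)-\phi(x)|\geqslant 1$ at your specific $x$), it is simply that $\phi\neq s$ (since $\phi\in\mathcal F$ and $s\notin\mathcal F$), both are integer-valued, and they agree at $h$, hence they differ on $\m{X}$ by at least $1$. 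This use of $\phi\in\mathcal F$, $s\notin\mathcal F$ is the missing ingredient; you never invoke it, and without it there is no way to get the required lower bound of $1$.
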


\begin{proof}
When $g, g' \in \widehat{\mathcal{F}}$, this is easily done. We therefore concentrate on the case where $g = \hat{\phi}$ for $\phi \in \mathcal{F}$ and $g' = s_3$. What we have to do is to show that: \[ |\hat{\phi}(u) - s_3(u)| \leqslant \max(\restrict{|\hat{\phi} - s_3|}{\m{X} \cup \{ v \}}) \ \ (1)\] \[ \hat{\phi}(u) + s_3(u) \geqslant \min(\restrict{(\hat{\phi}+s_3)}{\m{X} \cup \{ v \}}) \ \ (2)\] 

Recall first that $s_3(u) = s(h) + 1$ and that $s_3(v) = s(h)$. Remember also that according to the properties of $\mathcal{F}$, $s(h)\leqslant \phi(h)$. For $(1)$, if $s(h) < \phi(h)$, then we are done since 
\begin{align*} |\hat{\phi}(u) - s_3(u)| & = |\phi(h) - (s(h) + 1)| \\
& = \phi(h) - (s(h) + 1) \\
& \leqslant \phi(h) - s(h) \\
& = \phi(v) - s_3(v) \\
& \leqslant |\hat{\phi}(v) - s_3(v)|.\end{align*} 

On the other hand, if $\phi(h) = s(h)$, then $|\hat{\phi}(u) - s_3(u)| = 1$ but then this is less  than or equal to $\max(\restrict{|\hat{\phi} - s_3|}{\m{X} \cup \{ v \}})$ as this latter quantity is equal to $\max |\phi - s|$, which is at least $1$ since $\phi \in \mathcal{F}$ and $s \notin \mathcal{F}$. Thus, the inequality $(1)$ holds. As for $(2)$, simply observe that \[ \hat{\phi}(u) + s_3 (u) \geqslant \hat{\phi}(v) + s_3 (v). \qedhere\] \end{proof}
This finishes the proof of Lemma \ref{lem:ind2}. \end{proof}

\subsection{Proof of Lemma \ref{lem:red}}

\label{subsection:red}

The purpose of this section is to provide a proof of Lemma \ref{lem:red} which was used extensively in the previous proofs. Let $G_0 \subset G$ be finite subsets of $\Ur_p$, $\mathcal{G}$ a family of Kat\v{e}tov maps with domain $G$ and such that for every $g, g' \in \mathcal{G}$: \[ \max(\restrict{|g - g'|}{G_0}) = \max | g - g'|, \] \[ \min(\restrict{(g+g')}{G_0}) = \min(g + g').\] 

We need to produce an isometric copy $\m{C}$ of $\Ur _p$ inside $\Ur _p$ such that:
\begin{enumerate} 
\item $G \cap \m{C} = G_0$.
\item $\forall g \in \mathcal{G} \ \ O(\restrict{g}{G_0} , \m{C}) \subset O(g, \Ur _p).$  
\end{enumerate} 

First, observe that it suffices to provide the proof assuming that $G$ is of the form $G_0\cup\{ z\}$. The general case is then handled by repeating the procedure. 
  
\begin{lemma}

\label{lem:red1}

Let $\m{X}$ be a finite subspace of $\bigcup \{ O(\restrict{g}{G_0}) : g \in \mathcal{G}\}$. Then there is an isometry $\varphi$ on $\Ur _p$ fixing $G_0 \cup (\m{X} \cap \bigcup \{ O(g) : g \in \mathcal{G}\})$ and such that: \[ \forall g \in \mathcal{G} \ \ \varphi \left( \m{X} \cap O(\restrict{g}{G_0})\right) \subset O(g).\]

\end{lemma}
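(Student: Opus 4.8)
The plan is to build the isometry $\varphi$ on $\Ur_p$ by a single application of ultrahomogeneity (Lemma \ref{prop:extension}), after first constructing the right ``target'' configuration as a finite metric space and checking it embeds into $\Ur_p$. Write $G = G_0 \cup \{z\}$, so each $g \in \mathcal{G}$ is determined by $\restrict{g}{G_0}$ together with the single value $g(z)$; the hypotheses guarantee the map $g \mapsto \restrict{g}{G_0}$ is injective, so distinct $g, g' \in \mathcal{G}$ give distinct restrictions. For a point $x \in \m{X} \cap O(\restrict{g}{G_0})$ the desired image $\varphi(x)$ should additionally satisfy $d^{\Ur_p}(\varphi(x), z) = g(z)$; for $x \in \m{X} \cap \bigcup\{O(g) : g\in\mathcal{G}\}$ we want $\varphi(x) = x$, and we want $\varphi$ to fix $G_0$ pointwise. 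So first I would partition $\m{X}$ according to which orbits $O(\restrict{g}{G_0})$ each point lies in, and define a candidate finite metric space $\m{Y}$ on the vertex set $G_0 \cup \{z\} \cup \m{X}$: keep all distances among $G_0 \cup \{z\} \cup (\m{X}\cap\bigcup\{O(g)\})$ as they are in $\Ur_p$, keep the $G_0$-distances of the remaining points of $\m{X}$, and set the distance from such a point $x$ (lying in $O(\restrict{g}{G_0})$) to $z$ to be $g(z)$.

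The first real step is to verify that this assignment actually defines a metric space, i.e. the triangle inequalities hold; the only new distances involve $z$ and a point $x \in O(\restrict{g}{G_0}) \setminus \bigcup\{O(g')\}$, so I must check triangles $\{x, z, w\}$ for $w$ ranging over $G_0 \cup \{z\}\cup\m{X}$. When $w \in G_0$, the triangle $\{x, z, w\}$ needs $|g(z) - g(w)| \le d(x,w) \le g(z)+g(w)$ with $d(x,w) = g(w)$ since $x \in O(\restrict{g}{G_0})$; this is exactly that $g$ is Kat\v{e}tov on $G = G_0\cup\{z\}$. When $w$ is another point of $\m{X}$ lying in $O(\restrict{g'}{G_0})$ with $d(w,z)$ set to $g'(z)$, the triangle $\{x, w, z\}$ needs $|g(z) - g'(z)| \le d^{\Ur_p}(x,w) \le g(z) + g'(z)$; here I would use that $d^{\Ur_p}(x,w) \ge \max(\restrict{|g - g'|}{G_0}) = \max|g-g'| \ge |g(z)-g'(z)|$ and $d^{\Ur_p}(x,w) \le \min(\restrict{(g+g')}{G_0}) = \min(g+g') \le g(z)+g'(z)$ — this is precisely where the two displayed hypotheses of Lemma \ref{lem:red} (inherited here) are used, together with the fact that in $\Ur_p$ the distance between points of two orbits $O(\restrict{g}{G_0})$, $O(\restrict{g'}{G_0})$ is constrained by these max/min quantities (a standard fact about orbits, which I would cite or verify directly from Kat\v{e}tovness). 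Finally triangles with $w = z$ are trivial, and triangles among points already present are unchanged. Since all distances lie in $\{1,\dots,p\}$ by construction, $\m{Y}$ is a metric space with distances in $\{1,\dots,p\}$, hence embeds into $\Ur_p$.

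The second step: the subspace $\m{Z} := G_0 \cup \{z\} \cup (\m{X}\cap\bigcup\{O(g)\})$ of $\m{Y}$ is literally a subspace of $\Ur_p$ (no distances were changed on it), so there is an isometric embedding $j : \m{Z} \hookrightarrow \Ur_p$ which is the identity on $\m{Z}$. Now I regard $\m{Y}$ as a Kat\v{e}tov-type extension of $\m{Z}$ by the finitely many extra points $\m{X}\setminus\bigcup\{O(g)\}$, note $\m{Y}$ embeds into $\Ur_p$ by the previous paragraph, and apply ultrahomogeneity of $\Ur_p$ (in the iterated, finite-set form of Lemma \ref{prop:extension}) to get points of $\Ur_p$ realizing these extra distances over $\m{Z}$; this yields an isometric embedding $\iota : \m{Y} \to \Ur_p$ extending $\mathrm{id}_{\m{Z}}$. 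Then $\iota$ is a partial isometry of $\Ur_p$ with finite domain $\m{Y}$ fixing $G_0 \cup (\m{X}\cap\bigcup\{O(g)\})$ pointwise, so by ultrahomogeneity it extends to a surjective isometry $\varphi$ of $\Ur_p$. By construction, for $x \in \m{X}\cap O(\restrict{g}{G_0})$ we have $d^{\Ur_p}(\varphi(x), w) = d^{\Ur_p}(x,w) = g(w)$ for all $w \in G_0$ and $d^{\Ur_p}(\varphi(x), z) = g(z)$, i.e. $\varphi(x) \in O(g, \Ur_p)$; and $\varphi$ fixes $G_0 \cup (\m{X}\cap\bigcup\{O(g)\})$, exactly as required.

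The main obstacle I anticipate is the metric verification in the first step — specifically making sure that in $\Ur_p$ the distance between a point of $O(\restrict{g}{G_0})$ and a point of $O(\restrict{g'}{G_0})$ is genuinely sandwiched between $\max|g-g'|$ (using the hypothesis that this max is attained on $G_0$) and $\min(g+g')$ (likewise attained on $G_0$), so that resetting the $z$-distances to $g(z)$ and $g'(z)$ cannot violate the triangle inequality. Everything else (defining $\m{Y}$, embedding it, the two invocations of ultrahomogeneity) is routine once that inequality is in place. I would also remark, as the paper does, that injectivity of $g \mapsto \restrict{g}{G_0}$ — itself a consequence of the max-hypothesis — is what makes the above assignment of $z$-distances well defined.
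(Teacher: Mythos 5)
Your approach is genuinely different from the paper's, and the comparison is worth making. The paper keeps all of $\m{X}$ in place and instead \emph{moves $z$}: it builds a single Kat\v{e}tov map $k$ on $G_0 \cup \m{X}$ (with $k(x) = g_x(z)$ for $x \in \m{X}$ and $k$ agreeing with $d(\cdot,z)$ on $G_0$), realizes $k$ by a point $z'$, and extends the partial isometry that fixes $G_0 \cup (\m{X} \cap \bigcup\{O(g) : g\in\mathcal{G}\})$ and sends $z'\mapsto z$. You instead keep $z$ and $G_0$ in place and move the points of $\m{X} \smallsetminus \bigcup\{O(g): g\in\mathcal{G}\}$ to fresh realizers in a modified finite metric space $\m{Y}$. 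Both strategies turn on exactly the same triangle-inequality verification (your ``first real step'' is verbatim the content of the paper's claim that $k$ is Kat\v{e}tov, including the use of the two hypotheses on $\mathcal{G}$ and the injectivity of $g\mapsto\restrict{g}{G_0}$), so that part of your argument is correct and essentially identical in substance. What the paper's version buys is that the partial isometry it extends has domain and range that are manifestly subspaces of $\Ur_p$ with an obvious isometry between them — the only moved point is $z' \mapsto z$ — so ultrahomogeneity applies with no further discussion.

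Here is the gap in your version. You assert that $\iota$ ``is a partial isometry of $\Ur_p$ with finite domain $\m{Y}$,'' and then extend it by ultrahomogeneity. But $\iota$, taken with domain the full underlying set $G_0 \cup \{z\} \cup \m{X}$, is \emph{not} a partial isometry of $\Ur_p$: the metric that $\Ur_p$ induces on that set disagrees with $d^{\m{Y}}$ precisely on the distances from $z$ to the points of $\m{X} \smallsetminus \bigcup\{O(g): g\in\mathcal{G}\}$ — these are the distances you deliberately reset from $d^{\Ur_p}(x,z)$ to $g_x(z)$. Indeed, if those two numbers agreed for such an $x$, then $x$ would already lie in $O(g_x)$ and would not be in $\m{X}\smallsetminus\bigcup\{O(g)\}$. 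So ultrahomogeneity, which extends isometries between \emph{subspaces} of $\Ur_p$, cannot be invoked on $\iota$ as you state it. The repair is small but necessary: restrict to $\varphi_0 := \restrict{\iota}{G_0\cup\m{X}}$, where $d^{\m{Y}}$ and $d^{\Ur_p}$ genuinely coincide (no $z$-distances were touched there), so $\varphi_0$ \emph{is} a partial isometry of $\Ur_p$ fixing $G_0 \cup (\m{X}\cap\bigcup\{O(g)\})$; extend $\varphi_0$ to a surjective isometry $\varphi$ of $\Ur_p$. The desired conclusion $\varphi(x) = \iota(x) \in O(g_x)$ then still holds, not because $\varphi$ fixes $z$ (it need not), but because $\iota(x)$ was \emph{placed} at $\Ur_p$-distance $g_x(z)$ from the actual point $z$ when you realized the $\m{Y}$-distances over $\m{Z}$ — a fact about the location of $\iota(x)$ in $\Ur_p$, independent of $\varphi(z)$. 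With that correction your argument goes through.
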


\begin{proof}
For $x \in \m{X}$, there is a unique element $g_x \in \mathcal{G}$ such that $x \in O(\restrict{g_x}{G_0})$. Let $k$ be the map defined on $G_0 \cup \m{X}$ by
\begin{displaymath}
k(x) = \left \{ \begin{array}{cl}
 d^{\Ur _p}(x,z) & \textrm{if $x \in G_0$,} \\
 g_x(z) & \textrm{if $x \in \m{X}$.}
 \end{array} \right.
\end{displaymath}

\begin{claimm}
The map $k$ is Kat\v{e}tov. 
\end{claimm}

\begin{proof}
The metric space $G_0\cup \{z\}$ witnesses that $k$ is Kat\v{e}tov over $G_0$. Hence, it suffices to check that for every $x \in \m{X}$ and $y \in G_0\cup \m{X}$, \[ |k(x) - k(y)| \leqslant d^{\Ur _p} (x,y) \leqslant k(x) + k(y).\]

Consider first the case $y \in G_0$. Then $d^{\Ur}(x,y) = g_x(y)$ and we need to check that \[ |g_x(z) - d^{\Ur _p}(y,z)| \leqslant g_x(y) \leqslant g_x(z) + d^{\Ur _p}(y,z).\] 

Or equivalently, \[ |g_x(z) - g_x(y)| \leqslant d^{\Ur _p}(y,z) \leqslant g_x(z) + g_x(y).\] 

But this is true since $g_x$ is Kat\v{e}tov over $G_0\cup \{z\}$. Consider now the case $y\in \m{X}$. Then $k(y) = g_y(z)$ and we need to check \[ |g_x(z) - g_y(z)| \leqslant d^{\Ur _p}(x,y) \leqslant g_x(z) + g_y(z).\] 

But since $\m{X}$ is a subspace of $\bigcup \{ O(\restrict{g}{G_0}) : g \in \mathcal{G}\}$, we have, for every $u \in G_0$, \[ |d^{\Ur _p}(x,u) - d^{\Ur _p}(u,y)| \leqslant d^{\Ur _p}(x,y) \leqslant d^{\Ur _p}(x,u) + d^{\Ur _p}(x,u).\]

Since $x \in O(\restrict{g_x}{G_0})$ and $y \in O(\restrict{g_y}{G_0})$, this is equivalent to \[ |g_x(u) - g_y(u)| \leqslant d^{\Ur _p}(x,y) \leqslant g_x(u) + g_y(u).\] 

Therefore, \[ \max (\restrict{|g_x - g_y|}{G_0}) \leqslant d^{\Ur _p}(x,y) \leqslant \min (\restrict{(g_x + g_y)}{G_0}) .\] 

Now, by hypothesis on $\mathcal{G}$, this latter inequality remains valid if $G_0$ is replaced by $G_0\cup \{ z\}$. The required inequality follows. \end{proof}

By ultrahomogeneity of $\Ur _p$ (or, more precisely, by its equivalent reformulation provided in Lemma \ref{prop:extension}), we can consequently realize the map $k$ over $G_0\cup \m{X}$ by a point $z' \in \Ur _p$. The metric space $G_0\cup(\m{X}\cap\bigcup \{ O(g) : g \in \mathcal{G}\})\cup \{k\}$ being isometric to the subspace of $\Ur _p$ supported by $G_0\cup(\m{X}\cap\bigcup \{ O(g) : g \in \mathcal{G}\})\cup \{z\}$, so is the subspace of $\Ur _p$ supported by $G_0\cup(\m{X}\cap\bigcup \{ O(g) : g \in \mathcal{G}\})\cup \{z'\}$. By ultrahomogeneity again, we can therefore find a surjective isometry $\varphi$ of $\Ur _p$ fixing $G_0\cup(\m{X}\cap\bigcup \{ O(g) : g \in \mathcal{G}\})$ and such that $\varphi(z')=z$. Then $\varphi$ is as required: let $g \in \mathcal{G}$ and $x \in O(\restrict{g}{G_0})$. Then: \[d^{\Ur _p}(\varphi(x),z) = d^{\Ur _p}(\varphi(x), \varphi(z')) = d^{\Ur _p}(x,z') = k(x) = g(z). \] 

That is, $\varphi(x) \in O(g)$. \end{proof}

\begin{lemma}

\label{lem:red2}

There is an isometric embedding $\psi$ of $G_0 \cup \bigcup \{ O(\restrict{g}{G_0}) : g \in \mathcal{G})\}$ into $G_0 \cup \bigcup \{ O(g) : g \in \mathcal{G})\}$ fixing $G_0$ such that: \[ \forall g \in \mathcal{G} \ \ \psi \left( O(\restrict{g}{G_0})\right) \subset O(g).\] 

\end{lemma}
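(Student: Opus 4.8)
The plan is to build $\psi$ as an increasing union of partial isometric embeddings, adjoining one point of the domain at a time: Lemma \ref{lem:red1} will be used to place each new point correctly relative to $z$, and the ultrahomogeneity of $\Ur_p$ will be used to glue the successive approximations together. As already observed we may assume $G = G_0 \cup \{z\}$. Put $D = \bigcup\{O(\restrict{g}{G_0}) : g \in \mathcal{G}\}$, a countable subset of $\Ur_p$ which is disjoint from $G_0$ (a point of an orbit has positive distance to every point of $G_0$). Fix an enumeration $D = \{y_n : n \in \N\}$, and for $y \in D$ let $g_y$ be the unique member of $\mathcal{G}$ with $y \in O(\restrict{g_y}{G_0})$ (uniqueness as noted right after Lemma \ref{lem:red}). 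First I would construct isometric embeddings $\psi_n \colon G_0 \cup \{y_0,\ldots,y_{n-1}\} \to \Ur_p$ with $\psi_0 = \mathrm{id}_{G_0}$, $\psi_{n+1}$ extending $\psi_n$, each $\psi_n$ fixing $G_0$, and — the invariant that makes everything work — $\psi_n(y_i) \in O(g_{y_i})$ for all $i < n$.

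For the inductive step, given $\psi_n$, I would apply Lemma \ref{lem:red1} to the finite set $\m{X} = \{y_0,\ldots,y_n\}$ to get a surjective isometry $\varphi$ of $\Ur_p$ fixing $G_0$ with $\varphi(y_i) \in O(g_{y_i})$ for every $i \leqslant n$, and then amalgamate $\varphi$ with $\psi_n$ over $G$. Concretely, the map sending $u \mapsto u$ for $u \in G = G_0 \cup \{z\}$ and $\varphi(y_i) \mapsto \psi_n(y_i)$ for $i < n$ is an isometry between two finite subspaces of $\Ur_p$: indeed $d(\varphi(y_i),\varphi(y_j)) = d(y_i,y_j) = d(\psi_n(y_i),\psi_n(y_j))$ and, for $u \in G_0$, $d(\varphi(y_i),u) = d(y_i,u) = d(\psi_n(y_i),u)$ since $\varphi$ and $\psi_n$ fix $G_0$ and $\psi_n$ is an isometric embedding, while $d(\varphi(y_i),z) = g_{y_i}(z) = d(\psi_n(y_i),z)$ by the placement of $\varphi(y_i)$ and the inductive invariant on $\psi_n$ — this identity is exactly why the invariant is carried along. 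By ultrahomogeneity of $\Ur_p$ this partial isometry extends to a surjective isometry $\Theta$ of $\Ur_p$; since $\Theta$ fixes all of $G$ it maps $O(g_{y_n})$ onto itself. Setting $\psi_{n+1} = \psi_n \cup \{(y_n, \Theta(\varphi(y_n)))\}$, a routine check shows $\psi_{n+1}$ is an isometric embedding extending $\psi_n$, still fixing $G_0$, with $\psi_{n+1}(y_n) = \Theta(\varphi(y_n)) \in \Theta(O(g_{y_n})) = O(g_{y_n})$, so all invariants survive.

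Finally $\psi = \bigcup_n \psi_n$ is an isometric embedding of $G_0 \cup D$ into $\Ur_p$ that fixes $G_0$, and $\psi(y) \in O(g_y) \subset \bigcup\{O(g) : g \in \mathcal{G}\}$ for each $y \in D$; hence the range of $\psi$ lies in $G_0 \cup \bigcup\{O(g) : g \in \mathcal{G}\}$ and $\psi(O(\restrict{g}{G_0})) \subset O(g)$ for every $g \in \mathcal{G}$, which is the statement. The one genuinely delicate step is the amalgamation: naively gluing the locally-correct isometry $\varphi$ onto the previously built $\psi_n$ could move $z$ and thus wreck the membership $\varphi(y_n) \in O(g_{y_n})$ in the $z$-coordinate; keeping the invariant $\psi_n(y_i) \in O(g_{y_i})$ is precisely what forces $d(\varphi(y_i),z) = d(\psi_n(y_i),z)$ and so lets us take $\Theta$ to fix all of $G$. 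Note that the hypotheses on $\mathcal{G}$ are not invoked directly in this argument; they are consumed inside the proof of Lemma \ref{lem:red1}.
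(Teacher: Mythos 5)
Your proof is correct, and it rests on the same key ingredient as the paper's — repeated application of Lemma \ref{lem:red1} — but it organizes the induction in a genuinely different way. The paper maintains a sequence of \emph{surjective} isometries of $\Ur_p$ and, at stage $n$, feeds Lemma \ref{lem:red1} the \emph{images} of $x_0,\ldots,x_n$ under the current isometry; since the first $n$ of these already sit in the target orbits $O(g_k)$, the ``fixing'' clause of Lemma \ref{lem:red1} guarantees the new isometry leaves them in place, so coherence of the successive steps comes for free and no separate gluing argument is needed. You instead maintain \emph{partial} isometric embeddings $\psi_n$ defined only on $G_0\cup\{y_0,\ldots,y_{n-1}\}$, apply Lemma \ref{lem:red1} afresh to the original points $\{y_0,\ldots,y_n\}$ each time, and then reconcile the locally correct $\varphi$ with the already-built $\psi_n$ by an explicit amalgamation over $G$ via ultrahomogeneity. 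As you rightly emphasize, the invariant $\psi_n(y_i)\in O(g_{y_i})$ is exactly what makes the partial map $u\mapsto u$ (for $u\in G$), $\varphi(y_i)\mapsto\psi_n(y_i)$ preserve the distance to $z$, so the extension to a global isometry $\Theta$ fixing all of $G$ goes through, and $\Theta$ then carries $\varphi(y_n)\in O(g_{y_n})$ back into $O(g_{y_n})$. Your route is a step longer than the paper's but makes the coherence of the approximations fully explicit rather than burying it in the choice of the finite set handed to Lemma \ref{lem:red1}; both are sound.
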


\begin{proof}
Let $\{ x_n : n \in \N\}$ enumerate $\bigcup \{ O(\restrict{g}{G_0}) : g \in \mathcal{G})\}$. For $n \in \N$, let $g_n$ be the only $g \in \mathcal{G}$ such that $x_n \in O(\restrict{g_n}{G_0})$. Apply Lemma \ref{lem:red1} inductively to construct a sequence $(\psi _n)_{n \in \N}$ of surjective isometries of $\Ur _p$ such that for every $n\in \N$, $\psi_n$ fixes $G_0 \cup \psi_{n-1}\left(\{x_k : k < n \}\right)$ and $\psi_n(x_n) \in O(g_n)$. Then $\psi$ defined on $G_0 \cup \{ x_n : n \in \N\}$ by $\restrict{\psi}{G_0} = id_{G_0}$ and $\psi(x_n) = \psi_n(x_n)$ is as required. \end{proof}

We now turn to the proof of Lemma \ref{lem:red}. Let $\m{Y}$ and $\m{Z}$ be the metric subspaces of $\Ur _p$ supported by $G \cup \bigcup \{ O(g) : g \in \mathcal{G})\}$ and $G_0 \cup \bigcup \{ O(\restrict{g}{G_0}) : g \in \mathcal{G})\}$ respectively. Let $i_0 : \funct{\m{Z}}{\Ur _p}$ be the isometric embedding provided by the identity. By Lemma~\ref{lem:red2}, the space $\m{Z}$ embeds isometrically into $\m{Y}$ via an isometry $j_0$ that fixes $G_0$. We can therefore consider the metric space $\m{W}$ obtained by gluing $\Ur _p$ and $\m{Y}$ via an identification of $\m{Z} \subset \Ur _p$ and $j_0\left(\m{Z}\right) \subset \m{Y}$. The space $\m{W}$ is described in Figure 1. 

Formally, the space $\m{W}$ can be constructed thanks to a property of  countable metric spaces with distances in $\{ 1, \ldots , p\}$ known as \emph{strong amalgamation}: we can find a countable metric space $\m{W}$ with distances in $\{ 1, \ldots , p\}$ and isometric embeddings $i_1 : \funct{\Ur _p}{\m{W}}$ and $j_1 : \funct{\m{Y}}{\m{W}}$ such that: 
\begin{itemize}
	\item $i_1 \circ i_0 = j_1 \circ j_0$.
	\item $\m{W} = i_1\left(\Ur _p\right) \cup j_1\left(\m{Y}\right)$.
	\item $i_1\left(\Ur _p\right) \cap j_1\left(\m{Y}\right) = (i_1 \circ i_0)\left(\m{Z}\right) = (j_1 \circ j_0)\left( \m{Z}\right)$. 
	\item For every $x \in \Ur _p$ and $y \in \m{Y}$: \begin{align*}d^{\m{W}}(i_1(x),j_1(y)) & = \min \{ d^{\m{W}}(i_1(x),i_1 \circ i_0 (z)) + d^{\m{W}}(j_1 \circ j_0 (z),j_1(y)) : z \in \m{Z}\} \\
& = \min \{ d^{\Ur _p}(x,i_0 (z)) + d^{\m{Y}}(j_0 (z),y) : z \in \m{Z}\} \\
& = \min \{ d^{\Ur _p}(x,z) + d^{\m{Y}}(j_0 (z),y) : z \in \m{Z}\}.
\end{align*}
\end{itemize}
\vskip-5pt
\begin{figure}[h]
\begin{center}
\hskip-10pt\includegraphics[width=132.00mm]{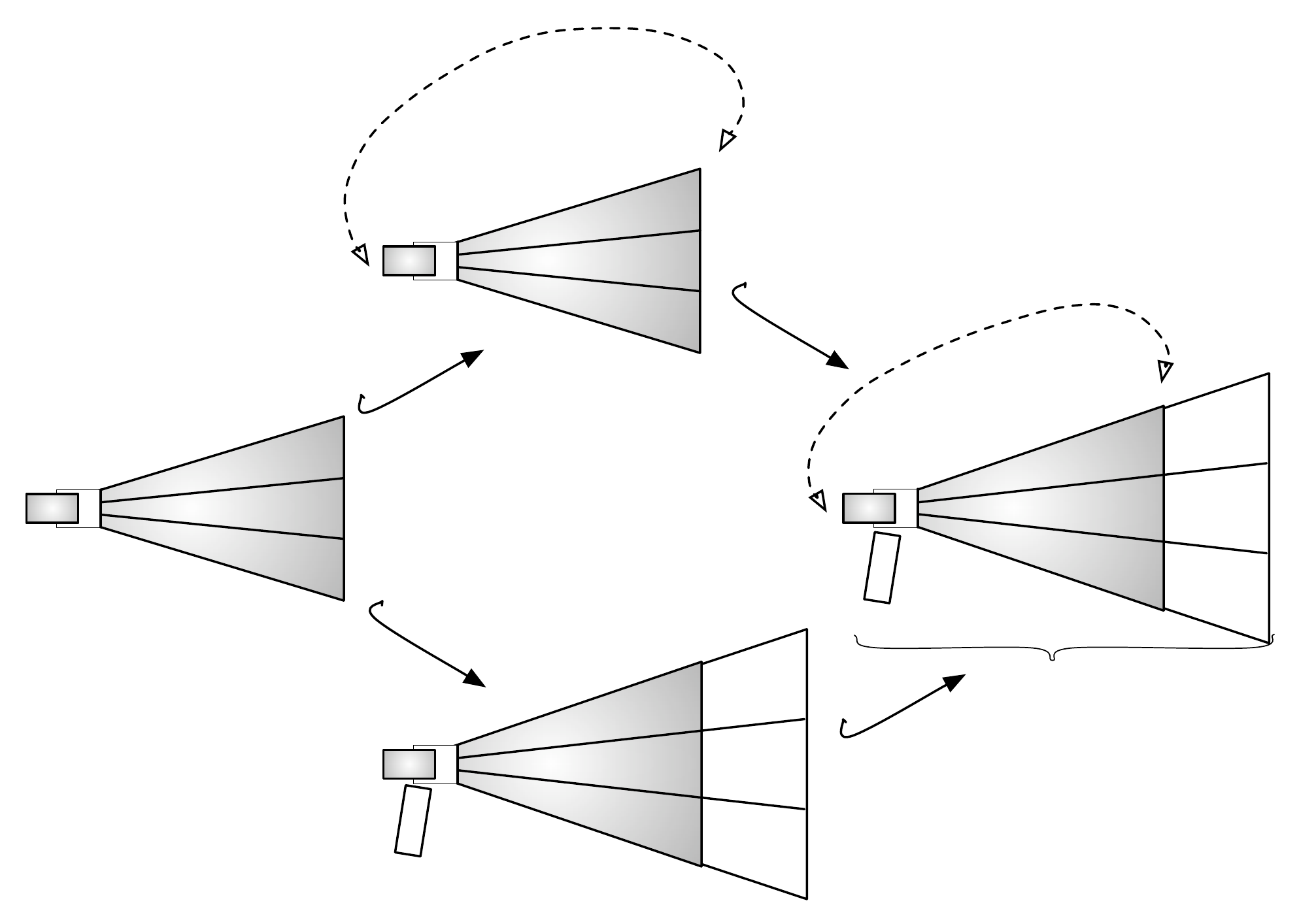}%
\drawat{-75.26mm}{93.04mm}{$\Ur _p$}%
\drawat{-128.91mm}{44.55mm}{$G_0$}%
\drawat{-112.80mm}{45.80mm}{$O(\restrict{g_1}{G_0})$}%
\drawat{-112.80mm}{41.mm}{$O(\restrict{g_2}{G_0})$}%
\drawat{-112.80mm}{37.mm}{$O(\restrict{g_3}{G_0})$}%
\drawat{-93.35mm}{26.26mm}{$j_0$}%
\drawat{-93mm}{19mm}{$G_0$}%
\drawat{-95.19mm}{10mm}{$G$}%
\drawat{-59.50mm}{22.41mm}{$O(g_1)$}%
\drawat{-59.50mm}{14mm}{$O(g_2)$}%
\drawat{-59.50mm}{7mm}{$O(g_3)$}%
\drawat{-54.64mm}{57.79mm}{$i_1$}%
\drawat{-91.30mm}{50.69mm}{$i_0$}%
\drawat{-24.49mm}{64.92mm}{$ i_1 \left(\Ur _p\right)$}%
\drawat{-27.5mm}{23mm}{$\m{W}$}%
\drawat{-40.80mm}{19.mm}{$j_1$}%
\drawat{-50.80mm}{36mm}{$j_1\left(G\right)$}%
\end{center}
\caption{The space $\m{W}$}
\end{figure}

The crucial point here is that in $\m{W}$, every $x \in i_1 \left( \Ur _p\right)$ realizing some $\restrict{g}{G_0}$ over $i_1 \left( G_0\right)$ also realizes $g$ over $j_1 \left( G\right)$.  

Using $\m{W}$, we show how $\m{C}$ can be constructed inductively: Consider an enumeration $\{ x_n : n \in \N\}$ of $i_1\left(\Ur _p\right)$ admitting $i_1\left(G_0\right)$ as an initial segment. Assume that the points $\varphi (x_0),\ldots , \varphi(x_n)$ are constructed so that: 
\begin{itemize}
	\item The map $\varphi$ is an isometry.
	\item $\dom \varphi \subset i_1\left(\Ur _p\right)$.
	\item $\ran \varphi \subset \Ur _p$.
	\item $\varphi(i_1(x)) = x$ whenever $x \in G_0$.
	\item $d^{\Ur _p} (\varphi (x_k) , z) = d^{\m{W}} (x_k , j_1(z))$ whenever $z \in G$ and $k \leqslant n$.
\end{itemize}

We want to construct $\varphi (x _{n+1})$. Consider $e$ defined on $\{ \varphi (x_k) : k \leqslant n\} \cup G $ by: 
\begin{displaymath}
\left \{ \begin{array}{l}
 \forall k \leqslant n \ \ e(\varphi(x_k)) = d^{\m{W}} (x_k , x_{n+1}), \\
 \forall z \in G \ \ e(z) = d^{\m{W}} (j_1(z) , x_{n+1}).
 \end{array} \right.
\end{displaymath}

Observe that the metric subspace of $\m{W}$ given by $\{x_k : k \leqslant n+1\} \cup j_1\left(G\right) $ witnesses that $e$ is Kat\v{e}tov. It follows that the set $E$ of all $y \in \Ur _p$ realizing $e$ over the set $\{ \varphi (x_k) : k \leqslant n\} \cup G $ is not empty and $\varphi (x_{n+1})$ can be chosen in $E$. $\qed$

\section{Appendix}

\label{section:appendix}

The purpose of this section is twofold. First, it is to provide a brief presentation of the attempts to solve the approximate indivisibility problem for $\s$. Then, it is to give a short outline of the proof from \cite{LANVT} according to which the indivisibility of the spaces $\Ur _m$ implies the approximate indivisibility of $\s$. 

The motivation for a combinatorial attack of the approximate indivisibility problem for the Urysohn sphere is based on two ideas. The first one is that the combinatorial point of view
is relevant for the study of countable ultrahomogeneous metric spaces in general. The second idea is that the complete separable ultrahomogeneous metric spaces are
closely linked to the countable ultrahomogeneous metric spaces. This connection is supported by the fact that every complete separable ultrahomogeneous metric space $\m{Y}$ includes a countable ultrahomogeneous dense metric subspace (for a proof, see \cite{LANVT}).

For example, consider the \emph{rational Urysohn space} $\Ur _{\Q}$ which can be defined up to
isometry as the unique countable  ultrahomogeneous metric space with rational distances for which
every   countable metric space with rational distances  embeds isometrically. The Urysohn
space $\Ur$ arises then as the completion of $\Ur _{\Q}$, a fact which is actually essential as it is at
the heart of several important contributions about $\Ur$. In particular, in the original article
\cite{U} of Urysohn, the space $\Ur$ is precisely constructed as the completion of $\Ur _{\Q}$
which is in turn constructed by hand. Similarly, the Urysohn sphere $\s$ arises as the completion of the so-called \emph{rational Urysohn
sphere} $\s _{\Q}$, defined up  to isometry as the unique countable ultrahomogeneous metric space
with distances in $\Q \cap [0,1]$ into which every at most countable metric space with distances in
$\Q \cap [0,1]$ embeds isometrically.

With respect to the approximate indivisibility problem, this latter fact naturally leads to the question of knowing whether $\s _{\Q}$ is indivisible. This question was answered by
to Delhomm\'{e}, Laflamme, Pouzet and Sauer in \cite{DLPS}, where a   detailed analysis of metric
indivisibility is provided and several obstructions to indivisibility are isolated. Cardinality is
such an obstruction: any separable indivisible metric space must be at most countable. Unboundedness is another example: any indivisible metric space must have a bounded distance set. It turns out that $\s
_{\Q}$ avoids those obstacles but encounters a third one: for a metric space $\m{X}$, $x \in
\m{X}$, and $\varepsilon > 0$, let $\lambda _{\varepsilon} (x)$ be the supremum of all reals $l
\leqslant 1$ such that there is an $\varepsilon$-chain $(x_i)_{i \leqslant n}$ containing $x$ and
such that $d^{\m{X}}(x_0 , x_n) \geqslant l$. Then, define
\[\lambda (x) = \inf \{ \lambda_\varepsilon(x) :  \varepsilon > 0 \}.\]
\begin{thm*}[Delhomm\'{e}-Laflamme-Pouzet-Sauer \cite{DLPS}]
\label{thm:lambda divisible} Let $\m{X}$ be a countable metric space. Assume that there is $x_0 \in
\m{X}$ such that $\lambda (x_0) > 0$. Then $\m{X}$ is not indivisible.
\end{thm*}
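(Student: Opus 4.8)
The plan is to establish the contrapositive: assuming there is $x_0 \in \m{X}$ with $\lambda(x_0)=\lambda^{\m{X}}(x_0)=\delta>0$, we produce a finite partition of $\m{X}$ no part of which contains an isometric copy of $\m{X}$. We first record a reduction. Since any metric space with unbounded distance set is already non-indivisible --- one of the obstructions recalled above --- we may assume that $\m{X}$ has finite diameter. The only conceptual point is that $\lambda$ is an isometry invariant and is monotone under passing to subspaces: for $P\subseteq\m{X}$ and $z\in P$ one has $\lambda^{P}(z)\leq\lambda^{\m{X}}(z)$, simply because every $\varepsilon$-chain inside $P$ is an $\varepsilon$-chain inside $\m{X}$; and if $\mc{X}\subseteq P$ is isometric to $\m{X}$ via an isometry sending $x_0$ to $z^{*}$, then $\lambda^{P}(z^{*})\geq\lambda^{\mc{X}}(z^{*})=\lambda^{\m{X}}(x_0)=\delta$. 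Hence it is enough to construct a finite partition $\m{X}=P_1\cup\cdots\cup P_k$ such that $\lambda^{P_i}(z)<\delta$ for every $i$ and every $z\in P_i$: no $P_i$ can then contain a copy of $\m{X}$, which is precisely the statement that $\m{X}$ is not indivisible.

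The next step is a geometric reformulation of what we have to build. Unwinding the definitions, $\lambda^{P}(z)<\delta$ means exactly that there is a scale $\varepsilon>0$ such that the $\varepsilon$-connected component of $z$ inside $P$ has diameter $<\delta$ (equivalently, every $\varepsilon$-chain inside $P$ through $z$ has its two endpoints less than $\delta$ apart). So we need a finite partition each part of which, at an appropriate small scale that may depend on the point, breaks up into pieces of diameter $<\delta$. The natural first attempt is a radial layering around $x_0$: set $L_j=\{\,y\in\m{X}:d^{\m{X}}(x_0,y)\in[\,j\delta/3,(j+1)\delta/3)\,\}$, which is a finite family since $\m{X}$ is bounded, and colour $y$ according to the parity of the index $j$ with $y\in L_j$. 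A one-line application of the triangle inequality shows that for $\varepsilon<\delta/3$ every monochromatic $\varepsilon$-chain must stay inside a single $L_j$; in particular the innermost shell $L_0$ has diameter $<2\delta/3<\delta$ and is taken care of. The difficulty lies in the outer shells $L_j$ with $j\geq 1$: they can have diameter $\geq\delta$, and, much worse, they can themselves contain isometric copies of $\m{X}$ (for $\m{X}=\s_{\Q}$ an equatorial shell is already universal for all countable metric spaces with distances in $\Q\cap[0,1]$). They therefore have to be broken up further --- e.g.\ by re-layering each shell radially around one of its own points, and iterating.

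The main obstacle is to make this refinement process terminate, i.e.\ to reach a \emph{finite} partition; note that termination cannot take the form ``finitely many small-diameter pieces'', since that would make $\m{X}$ totally bounded, which fails for $\s_{\Q}$ --- the target partition must have infinitely many small components per colour class, arranged coherently. Naively the recursion does not terminate: neither the diameter nor the quantity $\sup_{z}\lambda(z)$ strictly decreases when one passes from a space to a shell of a radial layering around a point with $\lambda\geq\delta$. What is required is a sharper bookkeeping device: one approach is to put a complexity rank on the metric subspaces involved that strictly drops at each refinement and is minimal exactly on subspaces with $\lambda<\delta$ everywhere; an alternative is a single ``global'' colouring that layers simultaneously around a well-chosen (possibly infinite) family of reference points, using that the obstruction $\lambda(x_0)=\delta>0$ is witnessed only at arbitrarily small scales, so that near any given point only boundedly many of these layerings are relevant and finitely many colours suffice. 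Designing this colouring and --- crucially --- bounding the number of colours is the technical heart of the argument, and is exactly what Delhomm\'{e}, Laflamme, Pouzet and Sauer carry out in \cite{DLPS}; by contrast the reductions of the first two paragraphs are routine.
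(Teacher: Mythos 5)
Your write-up is not a proof: it is a correct set-up followed by an explicit deferral of the entire content of the theorem. The preliminary reductions are sound and cleanly stated --- $\lambda$ is an isometry invariant and is monotone under passing to subspaces, so it suffices to exhibit a finite partition $\m{X}=P_1\cup\cdots\cup P_k$ with $\lambda^{P_i}(z)<\delta$ for every $i$ and every $z\in P_i$; and you have correctly unwound this into the requirement that each part, at a point-dependent scale $\varepsilon$, split into $\varepsilon$-components of diameter below $\delta$. You also correctly diagnose why the naive radial/parity shelling around $x_0$ fails (the outer shells are uncontrolled and may contain copies of $\m{X}$), and why a blind iteration of it has no visible termination measure. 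But the closing paragraph then says that designing a colouring which does the job, and bounding its number of colours, ``is exactly what Delhomm\'{e}, Laflamme, Pouzet and Sauer carry out in \cite{DLPS}.'' That construction \emph{is} the theorem. As it stands, a reader who did not already know the result would be left with a carefully posed open problem, not a proof.

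For what it is worth, the present paper also gives no proof of this statement: it is quoted as a black box from \cite{DLPS}, so there is nothing internal to compare your approach against. What you have produced is a faithful reconstruction of the reduction to the combinatorial core, together with an accurate account of why the obvious attempt fails --- useful scaffolding, but the load-bearing part is missing. If you want this to count as a proof you must actually produce the finite colouring: in \cite{DLPS} this is done via a more delicate bookkeeping than a one-point radial layering, and the number of colours is controlled by a fixed quantity extracted from $\delta$ and the diameter, not by recursion on subspaces. Until that construction is supplied, the argument is incomplete.
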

For $\s_{\Q}$, it is easy to see that ultrahomogeneity  together with the fact that the
distance set contains $0$ as an accumulation point imply that every point $x$ in $\s _{\Q}$ is such
that $\lambda (x) = 1$. It follows that:

\begin{cor*}[Delhomm\'{e}-Laflamme-Pouzet-Sauer \cite{DLPS}]

\label{thm:s_Q divisible}

$\s _{\Q}$ is divisible.

\end{cor*}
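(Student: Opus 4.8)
The plan is to read off the corollary from the preceding theorem of Delhomm\'{e}-Laflamme-Pouzet-Sauer, so the only thing I need to do is check that $\lambda(x) = 1$ for every point $x$ of $\s _{\Q}$. Granting this, fix any $x_0 \in \s _{\Q}$: then $\lambda(x_0) = 1 > 0$, the theorem applies, and $\s _{\Q}$ is not indivisible, i.e.\ divisible. Hence everything reduces to the computation of $\lambda$ on $\s _{\Q}$.

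So fix $x \in \s _{\Q}$. Since $\s _{\Q}$ has diameter $1$, the definition of $\lambda_{\varepsilon}$ already gives $\lambda_{\varepsilon}(x) \leqslant 1$ for every $\varepsilon > 0$, hence $\lambda(x) \leqslant 1$; what must be shown is that $\lambda_{\varepsilon}(x) = 1$ for every $\varepsilon > 0$. Fix such an $\varepsilon$. The feature of $\s _{\Q}$ that does the work is that its distance set has $0$ as an accumulation point, so I may pick an integer $N \geqslant 1$ with $1/N \leqslant \varepsilon$ and set $q = 1/N$. Now consider the finite metric space $\m{L} = \{ y_0, \ldots, y_N \}$ carrying the linear metric $d(y_i,y_j) = |i-j|/N$ (a subspace of $\R$), whose distances all lie in $\Q \cap [0,1]$. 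By universality, $\m{L}$ embeds isometrically into $\s _{\Q}$, say as $\{ z_0, \ldots, z_N \}$; by ultrahomogeneity the singleton isometry $z_0 \mapsto x$ extends to a self-isometry $\sigma$ of $\s _{\Q}$, so that $x = \sigma(z_0), \sigma(z_1), \ldots, \sigma(z_N)$ is a chain containing $x$ with all consecutive distances equal to $q \leqslant \varepsilon$ and with $d^{\s _{\Q}}(\sigma(z_0),\sigma(z_N)) = 1$. This is an $\varepsilon$-chain witnessing $\lambda_{\varepsilon}(x) \geqslant 1$, so $\lambda_{\varepsilon}(x) = 1$; since $\varepsilon > 0$ was arbitrary, $\lambda(x) = 1$.

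I do not expect a genuine obstacle here, since the real work is in the theorem being quoted. The one point deserving care is the construction of an $\varepsilon$-chain running through the prescribed point $x$ and having endpoints at (near) maximal distance: this is exactly where one must combine the availability of arbitrarily small positive distances --- so that a very fine linear chain still fits inside the unit diameter --- with ultrahomogeneity, used to anchor one endpoint of the chain at $x$. If one prefers not to invoke universality directly, the same chain can be built one vertex at a time using the one-point extension form of ultrahomogeneity (Lemma \ref{prop:extension}); and if one would rather not rely on the value $1$ being attained as a distance, replacing $N$ by $n = \lfloor 1/q \rfloor$ produces chains with $d(x_0,x_n) > 1 - \varepsilon$, still forcing $\lambda(x) = 1$ in the limit.
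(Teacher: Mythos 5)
Your proof is correct and takes exactly the route the paper indicates: the paper simply asserts that ultrahomogeneity plus the accumulation of the distance set at $0$ force $\lambda(x)=1$ for every $x\in\s_{\Q}$, and you fill in precisely that verification by embedding a fine linear chain via universality and anchoring it at $x$ via ultrahomogeneity. Nothing to add.
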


This result put an end to the first attempt to solve the oscillation  stability problem for $\s$.
Indeed, had $\s _{\Q}$ been indivisible, $\s$ would have been oscillation stable. But in the
present case, the coloring which was used to divide $\s _{\Q}$ did not lead to any conclusion concerning the approximate indivisibility problem for $\s$.

Later, the idea of using the spaces $\Ur _m$ simply came from the fact that essentially, what makes $\s _{\Q}$ divisible is the richness of its distance set. The hope was then that by working with those simpler spaces, one may be able to avoid the problem encountered above. The results of the present paper show that this hope was justified, but of course, the very first step was to show that the approximate indivisibility property for $\s$ could be captured by the indivisiblity of $\Ur _m$, or, equivalently, by the indivisibility of $\s _m = (\Ur _m, d^{\Ur _m}/m)$. This was one of the purposes of \cite{LANVT}, where the result is achieved by proving the following proposition (section 2.5 in \cite{LANVT}):

\begin{prop}
\label{cor:if s_m indiv then s 1/2m-indiv} Assume that for some strictly positive $m \in \omega$,
$\s _m$ is indivisible. Then $\s$ is $1/m$-indivisible.
\end{prop}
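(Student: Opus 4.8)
The plan is to deduce Proposition~\ref{cor:if s_m indiv then s 1/2m-indiv} from the indivisibility of $\s_m=(\Ur_m,d^{\Ur_m}/m)$ by a standard ``discretization plus density'' argument. First I would fix a finite partition $\gamma=\{\Gamma_1,\dots,\Gamma_r\}$ of $\s$ and some $\varepsilon>0$, and choose $m$ with $1/m<\varepsilon$ (or a comparable relation, e.g.\ $2/m\leqslant\varepsilon$, to leave room for the later approximation). The idea is to pull $\gamma$ back to a coloring of a copy of $\s_m$ sitting ``$1/2m$-densely'' inside $\s$. Concretely, recall from the discussion in the appendix that $\s$ is the completion of the rational Urysohn sphere $\s_{\Q}$ and that one can realize an isometric copy of $\s_m$ inside $\s$ whose $1/2m$-neighbourhood covers an isometric copy of $\s$ itself; this is the role the spaces $\Ur_m$ were designed to play, cf.\ \cite{LANVT}. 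So the first real step is: produce an isometric copy $\mc{S}$ of $\s_m$ in $\s$ together with the (metric) fact that $(\mc{S})_{1/2m}$ contains an isometric copy of $\s$.

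Next I would transport the partition: define a finite coloring $\gamma'$ of $\mc{S}\cong\s_m$ by assigning to each point $x\in\mc{S}$ the index $i$ of a part $\Gamma_i\in\gamma$ containing $x$ (points of $\mc{S}$ are genuine points of $\s$, so $\gamma$ restricts to them). Since $\s_m$ is indivisible by hypothesis, there is a color $i_0$ and an isometric copy $\mc{S}'\subset\mc{S}$ of $\s_m$ with $\mc{S}'\subset\Gamma_{i_0}$. Then I would apply the density fact again, now inside $\mc{S}'$: there is an isometric copy $\mc{T}$ of $\s$ with $\mc{T}\subset(\mc{S}')_{1/2m}$ computed in $\s$. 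Every point of $\mc{T}$ is then within $1/2m$ of a point of $\mc{S}'\subset\Gamma_{i_0}$, hence $\mc{T}\subset(\Gamma_{i_0})_{1/2m}\subset(\Gamma_{i_0})_{1/m}$. This gives an isometric copy of $\s$ inside $(\Gamma_{i_0})_{1/m}$, i.e.\ $\s$ is $1/m$-indivisible, which is the claim. (If one wants precisely $1/m$ rather than $1/2m$, the same argument with a copy of $\s_{2m}$ and a $1/2\cdot(2m)$-net works, or one simply reads the statement as a cosmetic reindexing.)

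The main obstacle, and the step I would expect to occupy the bulk of the argument, is the second displayed ingredient: establishing that a suitably chosen isometric copy of $\s_m=(\Ur_m,d^{\Ur_m}/m)$ inside $\s$ has the property that its $1/2m$-neighbourhood in $\s$ contains an isometric copy of the \emph{whole} space $\s$. This is not a formality: it requires choosing the embedding of $\Ur_m$ carefully (one does not want an arbitrary isometric copy, since rescaling a generic metric to the grid $\{1,\dots,m\}/m$ loses too much information), and it uses the universality and ultrahomogeneity of both $\s_{\Q}$ and $\Ur_m$ to show that any rational metric space of diameter $\leqslant 1$ can be $1/2m$-approximated by a metric space with distances in $\{1,\dots,m\}/m$, together with a back-and-forth / Katětov-map extension argument to realize the limit inside $\s$. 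The other, minor, point to be careful about is the bookkeeping of which neighbourhoods are computed in which ambient space; since all the copies involved are genuine subsets of $\s$, distances agree, but one should say so explicitly. Everything else is a direct application of the indivisibility hypothesis for $\s_m$ and elementary estimates on $\varepsilon$-neighbourhoods.
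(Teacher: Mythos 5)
Your overall plan (embed a copy of $\s_m$ into $\s$, restrict the partition to it, apply indivisibility of $\s_m$, then pass back to $\s$ via a neighbourhood estimate) matches the paper's strategy, but there is a genuine gap at the point where you ``apply the density fact again, now inside $\mc{S}'$.'' You construct one carefully chosen copy $\mc{S}$ of $\s_m$ inside $\s$ with the property that its $1/2m$-neighbourhood in $\s$ contains a copy of $\s$, and you correctly flag that this single copy must be chosen with care. But the indivisibility hypothesis then hands you an \emph{arbitrary} sub-copy $\mc{S}'\subset\mc{S}$ isometric to $\s_m$, and for your conclusion you need $(\mc{S}')_{1/2m}$, computed in $\s$, to contain a copy of $\s$ as well. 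This does not follow from $\mc{S}$ being good: a sub-copy of a good copy need not be good, since the points of $\s$ that witness the density of $\mc{S}$ may lie far from $\mc{S}'$ and there is no reason a priori for a suitable replacement set of witnesses to exist near $\mc{S}'$. What the argument actually requires is a reference copy of $\s_m$ inside $\s$ for which the density property holds \emph{uniformly} over all of its isometric sub-copies, and that uniformity is the whole content of the proposition.

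This is exactly what the paper's ambient space $\m{Z}$ is built to provide: $\m{Z}$ is a separable metric space of diameter at most $1$ containing a reference copy $\s_m^*$ of $\s_m$ such that for \emph{every} $\mc{S}_m\subset\s_m^*$ isometric to $\s_m$, the set $(\mc{S}_m)_{1/m}$ already contains a copy of $\s_{\Q}$; one then embeds $\m{Z}$ into $\s$ by universality, and the copy of $\s$ is obtained by taking the completion of that copy of $\s_{\Q}$ and observing that the $1/m$-neighbourhood is closed. Achieving this quantified-over-all-sub-copies statement is the delicate step and is not a matter of picking one good embedding: the naive move of strong-amalgamating, over each sub-copy $\mc{X}_m$ of the rounded space $\m{X}_m$ inside $\s_m^*$, a copy of the two-level space $\m{Y}_m$ whose lower level is $\m{X}_m$, would graft continuum many pairwise disjoint copies of $\s_{\Q}$ at mutual distance at least $1/m$ and destroy separability, so $\m{Z}$ would no longer embed into $\s$. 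The paper instead performs the gluing along a tree-like pattern so the added copies overlap and separability is preserved. Your proposal identifies the right ``hard step'' (a careful embedding of $\Ur_m$ into $\s$) but misdiagnoses what makes it hard: the difficulty is not approximating one metric by a grid metric, it is making the neighbourhood property robust under passing to arbitrary sub-copies while staying separable. The $1/2m$ versus $1/m$ bookkeeping you raise at the end is harmless; the paper's ceiling-rounding gives $1/m$ directly, which is what the statement asserts.
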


\begin{proof}

This is obtained by showing that there is a separable metric space $\m{Z}$ with distances in $[0,1]$ and
including a copy $\s _m ^*$ of $\s _m$ such that for every $\mc{S}_m \subset \s_m ^*$ isometric to $\s_m$, the set $(\mc{S}_m)_{1/m}$ includes an isometric copy of $\s _{\Q}$. This property indeed suffices to prove the Proposition: the space $\m{Z}$ is separable with distances in $[0,1]$ so by universality of $\s$ we may assume that it is actually a subspace of $\s$. Let now $\gamma$ be a finite partition of $\s$. It induces a finite partition of the copy $\s _m ^*$. By indivisibility of $\s _m$, find $\Gamma \in \gamma$ and $\mc{S} _m \subset \s _m ^*$ such that $\mc{S} _m \subset \Gamma$. By construction of \m{Z}, the set $(\mc{S}_m)_{1/m}$ includes an isometric copy of $\s _{\Q}$. Observe that since the metric completion of $\s _{\Q}$ is $\s$, the closure of $(\mc{S} _m)_{1/m}$ in $\s$ includes a  copy of $\s$, and we are done since $(\mc{S} _m)_{1/m}$ is closed in $\s$. To construct $\m{Z}$, we first construct a metric space $\m{Y}_m$ defined on the set $\s _{\Q}\times \{0,1 \}$ and where the metric $d^{\m{Y}_m}$ satisfies, for every $x, y \in \s _{\Q}$:

\begin{itemize}
	\item $d^{\m{Y}_m}((x,1),(y,1))=d^{\s _{\Q}}(x,y)$.
	\item $d^{\m{Y}_m}((x,0),(y,0)) = \left\lceil d^{\s _{\Q}}(x,y)\right\rceil_m$ (the least $k/m \geqslant d^{\s _{\Q}}(x,y)$).
	\item $d^{\m{Y}_m}((x,0),(x,1))=1/m$.
\end{itemize}

The space $\m{Y}_m$ is really a two-level metric space with a lower level we call $\m{X}_m$. Note that $\m{X}_m$ embeds into $\s _m$ because it is a countable metric space with distances in $\{ k/m : k \in \{ 0, \ldots , m \} \}$. Note also that in $\m{Y}_m$, $(\m{X}_m)_{1/m}$ includes a copy of $\s _{\Q}$. So the basic idea to construct $\m{Z}$ is to start from a copy of $\s _m$, call it $\s_m ^*$, and to use some kind of gluing technique to glue a copy of $\m{Y}_m$ on $\s_m ^*$ along $\mc{X}_m$ whenever $\mc{X}_m$ is a copy of $\m{X}_m$ inside $\s_m ^*$. Because each copy of $\s _m$ contains a copy of $\m{X}_m$, this process adds a copy of $\s _{\Q}$ inside $(\mc{S}_m)_{1/m}$ whenever $\mc{S}_m \subset \s_m ^*$ is isometric to $\s_m$. There is, however, a delicate part. Namely, the gluing process has to be performed in such a way that $\m{Z}$ is separable. For example, this restriction forbids the use of strong amalgamation already used in Lemma \ref{lem:red2}, because then we would go from $\s_m ^*$ to $\m{Z}$ by adding continuum many copies of $\s _{\Q}$ that are pairwise disjoint and at least $1/m$ apart. In spirit, the way this issue is solved is by allowing the different copies of $\s _{\Q}$ we are adding to intersect using some kind of tree-like pattern on the set of copies $\mc{X}_m$ inside $\s_m ^*$. For more details, see \cite{LANVT}. \end{proof}

\end{document}